\documentclass[12pt]{amsart}
\usepackage{latexsym,amsmath,amssymb,amsfonts,amscd,graphics}
\usepackage[alphabetic,lite,nobysame]{amsrefs}
\usepackage{blindtext}
\usepackage[left=1in,right=1in,bottom=1.5in]{geometry}
\usepackage[skip=4pt plus1pt, indent=10pt]{parskip}
\usepackage[mathscr]{eucal}
\usepackage{mathrsfs}
\usepackage{tikz-cd}
\usepackage{hyperref, cleveref}
\usepackage{thmtools}
\usepackage{enumitem}
\usepackage[all]{xy}
\usepackage{caption}
\usepackage{tabularx} 
\usepackage{array}    
\setlist[itemize]{leftmargin=48pt}

\numberwithin{equation}{section}
\theoremstyle{plain}
\newtheorem{theorem}[equation]{Theorem}

\newtheorem{lemma}[equation]{Lemma}
\newtheorem{proposition}[equation]{Proposition}
\newtheorem{corollary}[equation]{Corollary}

\theoremstyle{remark}
\newtheorem{remark}[equation]{Remark}

\theoremstyle{definition}
\newtheorem{definition}[equation]{Definition}

\newtheorem{conjecture}[equation]{Conjecture}

\newcommand{\bP}{\mathbb{P}}
\newcommand{\bA}{\mathbb{A}}

\newcommand{\bR}{\mathbb{R}}

\newcommand{\bZ}{\mathbb{Z}}
\newcommand{\bF}{\mathbb{F}}
\newcommand{\bC}{\mathbb{C}}

\newcommand{\bL}{\mathbb{L}}

\newcommand{\Bl}{\mathrm{Bl}}

\newcommand{\calC}{\mathcal{C}}

\newcommand{\calF}{\mathcal{F}}
\newcommand{\calK}{\mathcal{K}}

\newcommand{\calO}{\mathcal{O}}
\newcommand{\calL}{\mathcal{L}}
\newcommand{\calP}{\mathcal{P}}
\newcommand{\calI}{\mathcal{I}}

\newcommand{\Ext}{\mathrm{Ext}}
\newcommand{\Aut}{\mathrm{Aut}}

\newcommand{\Hilb}{\mathrm{Hilb}}

\newcommand{\Hom}{\mathrm{Hom}}
\newcommand{\Mov}{\mathrm{Mov}}
\newcommand{\Spec}{\mathrm{Spec}}

\newcommand{\coker}{\mathrm{Coker}}
\newcommand{\Gr}{\mathrm{Gr}}

\newcommand{\rank}{\mathrm{rank}}

\newcommand{\Pic}{\mathrm{Pic}}
\newcommand{\Num}{\mathrm{Num}}
\newcommand{\Nef}{\mathrm{Nef}}

\newcommand{\NS}{\mathrm{NS}}

\newcommand{\Amp}{\mathrm{Amp}}

\newcommand{\git}{/\kern-0.2em/}

\newcommand{\id}{\mathrm{id}}

\newcommand{\Ku}{\mathrm{Ku}}

\title[]{Birational cubic fourfolds via an Enriques Cremona transformation} 
\author{Corey Brooke}
\address{Department of Mathematics, Statistics, and Computer Science, St. Olaf College, Northfield, MN}
\email{brooke@stolaf.edu}

\author{Lisa Marquand}
\address{Department of Mathematics, Rutgers University - New Brunswick,
Hill Center,
110 Frelinghuysen Road,
Piscataway,
NJ}
\email{lisa.marquand@rutgers.edu}

\begin{document}

\begin{abstract}
    We describe a Cremona transformation on $\bP^5$ given by the linear system of quintics that are double along the Fano model of an Enriques surface $S$. Using this Cremona transform, we construct a birational equivalence between the very general cubic fourfold $X$ of discriminant $44$ and its unique Fourier--Mukai partner.
\end{abstract}

\maketitle

\section{Introduction}

The birational geometry of smooth, complex cubic fourfolds has attracted significant recent attention over the past few decades. Most notably, the authors of \cite{KKPY25} verified the irrationality of the very general cubic fourfold, using the new theory of Hodge atoms. 
However, it is still unclear how to distinguish the birational class of one irrational cubic fourfold from another. 

There are very few known examples of  nontrivial birational maps between non-isomorphic pairs of irrational cubic fourfolds. 
To gain more traction on the geometry, one can restrict to special cubic fourfolds contained in the Hassett divisors $\calC_d$ introduced in \cite{Hassettcubicslong}, and in particular to those that are conjecturally irrational, i.e. those without an associated twisted K3 surface.
The lowest-discriminant Hassett divisors parametrising such cubics are $\calC_d$ for $d= 12$, $20$, $30$, $36$, and the subject of this article, $d=44$. 

A very general cubic fourfold in $\calC_{44}$ contains a unique smooth Enriques surface, embedded in $\bP^5$ via a Fano polarisation \cite{nuer}. 
Our main result is the following:

\begin{theorem}[Theorem \ref{thm: cubic theorem}]\label{theorem: main}
    Suppose $X$ is a very general cubic fourfold in $\calC_{44}$, containing a Fano model of an Enriques surface $S$. Then there exists a non-isomorphic cubic fourfold $Y$, also belonging to $\calC_{44}$, containing a Fano model of an Enriques surface $T$ such that $\Bl_SX\cong\Bl_TY$. In particular, $X$ and $Y$ are birationally equivalent.
\end{theorem}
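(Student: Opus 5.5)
The plan is to build the birational map from the Cremona transformation $\phi\colon \bP^5 \dashrightarrow \bP^5$ described in the abstract. It is given by the linear system $|5H - 2S|$ of quintics double along the Fano model $S \subset \bP^5$ of the Enriques surface. First I would establish the basic numerics of $S$. The Fano model has degree $10$ and sectional genus $6$, and its ideal is generated by ten cubics. Then I would show that $h^0(\bP^5, \calI_S^2(5)) = 6$ and that $\phi$ is birational, with inverse of the same type $|5H - 2T|$ for a second Fano-model Enriques surface $T$. Concretely, on $W = \Bl_S \bP^5$ with exceptional divisor $E$, set $L = 5H - 2E$. One computes $L^5 = 1$ using the Chern classes of $S$ and of its normal bundle. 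One then shows that $L$ is base point free and big, and that the induced morphism $W \to \bP^5$ is itself the blow-up of a smooth surface $T$, with exceptional divisor $F = 12H - 5E$ (or similar). The expected relations are $H = 5L - 2F$ and $E = 12L - 5F$, an involutive matrix. To identify $T$ as an Enriques surface I would note that $F \to T$ is a $\bP^3$-bundle, and that $T$ is dominated by the secant-type locus: $F$ should be the strict transform of the degree-$12$ hypersurface of trisecant lines, or whatever contracts under $\phi$. Then I would compute invariants such as $K_T$, $\chi(\calO_T)$ and $\deg T$ to conclude that $T$ is again a Fano model.

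Next, the cubic. Since $S \subset X$ is smooth, $\Bl_S X$ is the strict transform $\tilde X \subset W$, with class $3H - E$. Under the change of basis, $3H - E = 3(5L - 2F) - (12L - 5F) = 3L - F$. So the image $Y = \phi(X)$ is a cubic fourfold containing $T$, and $\tilde X$ is also the strict transform of $Y$ in $\Bl_T \bP^5$. To conclude $\Bl_T Y \cong \tilde X$, I need $Y$ to be smooth, or at least need $\tilde X \to Y$ to be exactly the blow-up of $T$. For this I would use that $\tilde X \cap F$ should be the exceptional divisor of a blow-up along $T$. Smoothness of $Y$ for very general $X$ follows by an openness argument: exhibit a single smooth pair (possibly by computer) and use that the construction works in families over the irreducible parameter space of pairs $(S, X)$. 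The fact that $Y$ lies in $\calC_{44}$ is immediate, because $Y$ contains the Fano model $T$ and $d$ is computed from $H^2$, $T$ and $T^2$ exactly as for $S$.

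The main obstacle is showing that $X \not\cong Y$. My first approach is Hodge-theoretic. Consider the Hodge isometry $H^4(X) \to H^4(Y)$ induced through $H^4(\Bl_S X)$. It sends $h^2_X$ to a class that is not $\pm h^2_Y$, by the linear change above. Since $X$ is very general, the only Hodge isometries of $A(X)^\perp$ are $\pm\mathrm{id}$. So an isomorphism $X \cong Y$ would give a Hodge isometry of the primitive lattices, which I would compare with this one; I would check that the composite cannot preserve $h^2$ on the rank-two lattice $\langle h^2, S\rangle$ of discriminant $44$. An alternative route is the fact from the abstract that $X$ has a unique nontrivial Fourier--Mukai partner. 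Here I would show that the correspondence identifies $\Ku(Y)$ with $\Ku(X)$ through a non-standard lattice embedding, so that $Y$ is that partner. In the hardest step I would first try the explicit lattice computation, tracking the classes $h^2$, $S$ and $T$ in $H^4(\tilde X)$.
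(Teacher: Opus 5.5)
There is a genuine gap at the foundation of your plan. The linear system $|L|=|5H-2E|$ is \emph{not} base point free on $W=\Bl_S\bP^5$, and $W\to\bP^5$ is not a blow-up of a surface $T$.

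A quintic double along $S$ meets any trisecant line of $S$ with multiplicity at least $6>5$, so it contains every trisecant line. These lines sweep out the twenty planes $\Pi_{\pm i}$ spanned by the plane cubics $F_{\pm i}\subset S$; they form a surface, not a degree-$12$ hypersurface. On the strict transforms one has $H|_{\widetilde\Pi_i}=l$ and $E|_{\widetilde\Pi_i}=3l$, so $L|_{\widetilde\Pi_i}=-l$ and $L$ is not even nef. Accordingly, the Chern class computation on $W$ gives $L^5=21$, not $1$. The value $1$ appears only after resolving the base locus by blowing up the twenty planes, equivalently after the standard flop of the planes (normal bundle $\calO(-1)^{\oplus 3}$); each plane contributes $-1$. (Also, the exceptional divisor over a surface in $\bP^5$ is a $\bP^2$-bundle, not a $\bP^3$-bundle.)

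So your step ``$\tilde X$ is the strict transform of $Y$ in $\Bl_T\bP^5$'' rests on a structure that does not exist. Even after inserting the flop, the claim that $\bP^+\to\bP^5$ is the blow-up of a smooth surface is left unproved, and it is the hard step. In the paper, this fact about the Cremona map is deduced \emph{from} the fourfold statement, not the other way round.

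The missing ideas are the following.
\begin{enumerate}
\item Since a very general $X\in\calC_{44}$ contains no planes, $\Pi_{\pm i}\cap X=F_{\pm i}\subset S$. Hence $Z=\Bl_SX$ is disjoint from the base locus, and $|5H-2E|$ restricts to a base point free system on $Z$.
\item The equality $(H')^4=3$ and nondegeneracy then give a birational morphism $q$ onto a cubic $Y$.
\item One must prove intrinsically on $Z$ that $q$ is the blow-up of a smooth surface:
\begin{itemize}
\item smoothness of $Y$ excludes a small contraction;
\item $K_{Z/Y}=12H-5E$ is primitive, so the exceptional divisor $E'$ is irreducible;
\item $(H')^2(E')^2=-10$ forces $\dim q(E')=2$;
\item two-dimensional fibres are excluded by Mella--Wi\'sniewski together with an integrality argument in $A(X)$: such a fibre would produce a class $a\eta_X+b[S]$ with $a=71k/22$, $b=-25k/44$, $k\in\{1,2\}$;
\item Ando's theorem then applies.
\end{itemize}
\end{enumerate}

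For $X\not\cong Y$, note that $Z$ induces no Hodge isometry of all of $H^4$, only $T(X)\cong T(Z)\cong T(Y)$, so your comparison is not set up correctly. The relevant invariant is the gluing on discriminant groups. Set $\delta_X=(3[S]-10\eta_X)/44$ and $\delta_Y=(3[T]-10\eta_Y)/44$. One finds $q^*\delta_Y=23\,p^*\delta_X$ in $D(A(Z))\cong\bZ/44\bZ$. An isomorphism $X\cong Y$ must fix $\eta$ and send $[T]$ to $[S]$, so it would give a Hodge isometry of $T(X)$ acting by $23\not\equiv\pm1 \pmod{44}$ on $D(T(X))$. This contradicts very generality.
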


For a precise characterisation of what properties are entailed in our generality assumptions, see Remark~\ref{rmk: very general}. By the specialization theorem of Kontsevich--Tschinkel \cite[Theorem~1]{KT19}, every simultaneous smooth specialization of the pairs $(X,Y)$ constructed above is again a birational pair; see Remark~\ref{rem:specialization}.

Our construction involves using the five-dimensional projective linear system of quintics in $\bP^5$ containing the Enriques surface $S$ with multiplicity $2$, restricted to $X$. After blowing up $X$ along the surface $S$, this system becomes basepoint-free, providing a morphism to $\bP^5$ whose image is $Y$. The variety $Z:=\Bl_SX\cong \Bl_SY$ may be of independent interest; indeed, it is a Fano fourfold of index 1, Picard rank $2$ and anti-canonical degree 21 (see Proposition \ref{prop: Z Fano}). Recently, Tufo proved that the Fano dimension of a general Enriques surface is 4, by constructing a Fano fourfold $W$ with $D^b(S)\hookrightarrow D^b(W)$ as a semi-othogonal component \cite{Tufo26}. From our construction, the Fano variety $Z$ is another example of such a Fano host, and is distinct from that of Tufo.

By considering the same linear system of quintics above but on the ambient $\bP^5$ yields an interesting Cremona transform of $\bP^5$. Our next result is the following:

\begin{theorem}[Theorem \ref{thm: Cremona}]
   Let $S$ be a very general unnodal Enriques surface embedded in $\bP^5$ via a Fano polarisation. Then the linear system of quintics containing $S$ with multiplicity $2$ defines a Cremona transform $f:\bP^5\dashrightarrow \bP^5$. Further, $f$ admits a factorisation:
   \[\begin{tikzcd}
       \Bl_S\bP^5 \arrow[r, "\varphi", dashrightarrow]\arrow[d]& \Bl_T\bP^5\arrow[d]\\
       \bP^5\arrow[r, "f" above, dashrightarrow]& \bP^5
   \end{tikzcd}
   \] where $T$ is the Enriques surface constructed in Theorem~\ref{theorem: main} and $\varphi$ is a ordinary flop of 20 disjoint planes.
\end{theorem}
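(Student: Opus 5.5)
Here $S\subset\bP^5$ is the Fano model: degree $10$, sectional genus $6$, $h^0(\calI_S(2))=6$, and $S$ is cut out by cubics. Let $\pi\colon P:=\Bl_S\bP^5\to\bP^5$ have exceptional divisor $E$, and let $H$ be the hyperplane class. The target linear system is $L=5H-2E$.

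The plan is to show three things in turn:
\begin{enumerate}
\item $|L|$ is $5$-dimensional and defines a birational map onto $\bP^5$;
\item the indeterminacy of the induced map on $P$ consists of $20$ disjoint planes with normal bundle $\calO(-1)^{\oplus 2}$;
\item the resulting flop identifies the other side with $\Bl_T\bP^5$, with $T$ the surface of Theorem~\ref{theorem: main}.
\end{enumerate}

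\textbf{Step 1: the linear system.} I compute $h^0(\calO_P(L))=6$ by Riemann--Roch together with vanishing. I use the sequences $0\to \calI_S^{2}(5)\to\calI_S(5)\to N^\vee_S(5)\to 0$ and the known resolution of $\calI_S$, namely the Fano/Enriques minimal resolution. These give $h^0(\calI_S(5))$ and $h^0(N^\vee_S(5))$, with $H^1$ vanishing for very general $S$.

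I also compute $L^5$ on $P$ using the Segre classes of $N_S$. This needs $K_S$ numerically trivial, $c_2(S)=12$, $H|_S^2=10$, and $c(N_S)$ from $c(T_{\bP^5})/c(T_S)$. I expect $L^5=1$ once the contribution of the indeterminacy is removed, so the map is birational onto $\bP^5$.

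For the inverse, I take the analogous system $5H'-2E'$ on the target blowup. The equation $H=5H'-2E'$ should hold because $f$ is of type $(5,5)$. To confirm the type, I check $H\cdot$ (a general line through two points) and $L\cdot$ (a general line). A general secant line of $S$ meets $S$ twice, so it is contracted in suitable directions. The type should follow from $L^4H=5$ and $L H^4 = 5$ intersection computations.

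\textbf{Step 2: the bad locus.} The curves $C$ with $L\cdot C<0$ or $L\cdot C=0$ that are not contracted by a divisorial contraction are the candidates. The natural candidates are planes $\Pi\subset\bP^5$ meeting $S$ in a plane quintic curve: then $5H-2E$ restricted to the strict transform of $\Pi$ is $5\ell-2\cdot 5$ on the cubic in $\Pi$. This suggests taking planes $\Pi$ with $\Pi\cap S$ a plane cubic, i.e.\ an elliptic curve of degree $3$.

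An unnodal Enriques surface in its Fano model has exactly $20$ elliptic half-pencils of degree $3$, namely the Fano polarisation $\Delta=\frac13(F_1+\dots+F_{10})$ with both members $F_i, F_i'$ of each pencil. These give $20$ plane cubics $F\subset\Pi_F$. On $\tilde\Pi_F\cong\Pi_F$, since the cubic is a Cartier divisor, $L|_{\tilde\Pi}=5\ell-2\cdot3\ell=-\ell$. So these planes are exactly the $L$-negative locus.

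I then need to show:
\begin{itemize}
\item the planes are pairwise disjoint in $P$: the intersection $F_i\cdot F_j=1$ lies on $S$ and is separated by the blowup, after checking transversality;
\item their normal bundle is $\calO(-1)^2$, obtained from $N_{\Pi/\bP^5}=\calO(1)^3$ twisted down by the blowup along the cubic curve, via $c_1$ and splitting;
\item $L$ is nef and big off them, so that $L$ is basepoint free on the flopped model.
\end{itemize}
The last point I would prove by showing that $\calI_S^2(5)$ is globally generated away from $\bigcup\Pi_F$, using the cubic generators of $\calI_S$: products $q\cdot c$ of quadrics and cubics, or $c\cdot c'\cdot$linear forms, vanish doubly on $S$.

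\textbf{Step 3: the flop and the target.} After the standard Atiyah flop $\varphi$ of the $20$ planes, $L$ becomes nef with $L^5=1$, and gives a morphism $P^+\to\bP^5$. I identify its exceptional divisor as the strict transform of the secant-type divisor, namely the degree-$?$ hypersurface swept by lines meeting $S$ twice and lying in $|aH-bE|$ with $(a,b)=(?,?)$ fixed by $K_{P}=-6H+2E$ being preserved. Its image is a surface $T$, and restricting to $X$ identifies $T$ with the surface of Theorem~\ref{theorem: main}, as Step 1 is compatible with $X$ via the restriction $Z\subset P$. The morphism is therefore $\Bl_T\bP^5$.

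The main obstacle is the base-point freeness and birationality in Steps 1–2: proving that the base locus of $|5H-2E|$ is exactly the $20$ planes. I would try this first by an explicit Macaulay2 computation on one Fano Enriques surface, then conclude by semicontinuity for very general $S$.
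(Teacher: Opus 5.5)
Your skeleton matches the paper's. You compute $h^0=6$ by Riemann--Roch plus a computer check, show the base locus is the twenty planes from one explicit example plus semicontinuity, get $L^5=1$ after correcting for the planes, perform an Atiyah-type flop, and identify the target via a cubic $X\supset S$. However, two load-bearing steps are wrong or missing.

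\textbf{The normal bundle and base-point freeness.} $\widetilde\Pi_i$ is a surface in a fivefold, so $N_{\widetilde\Pi_i/\bP^-}$ has rank $3$, and the claim must be $\calO(-1)^{\oplus 3}$; this is what makes the exceptional divisor $\bP^2\times\bP^2$ and the contraction a flop. Adjunction gives $c_1=-3\ell$, since $K_{\bP^-}|_{\widetilde\Pi_i}=(-6H+2E)|_{\widetilde\Pi_i}=0$. But $c_1$ does not determine a rank-$3$ bundle on $\bP^2$: $\calO\oplus\calO(-1)\oplus\calO(-2)$ has the same $c_1$. Nor is $N_{\widetilde\Pi_i}$ a twist of $N_{\Pi_i/\bP^5}=\calO(1)^{\oplus 3}$. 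Your own computation of $L^5=1$ also needs $c_2(N)=3\ell^2$.

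The missing idea is the paper's. The \emph{scheme-theoretic} base locus of $|5H-2E|$ is the reduced union of the planes. Hence on the blowup $\mu$ of the planes, $\mu^*(5H-2E)-\sum G_i$ is globally generated. Its restriction to $G_i=\bP(N)$ is $\calO_{G_i}(1)\otimes\mathrm{pr}_i^*\calO(-1)$. So $N^\vee(-1)$ is globally generated with trivial determinant, hence trivial. The same scheme-theoretic fact, via $L|_{G_i}\cong\calO(0,1)$, is what lets the system descend to a base-point-free one on $\bP^+$; ``nef'' alone would not give you a morphism.

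Your suggested hand proof of global generation cannot work. $S$ lies on no quadric (twist the resolution of $\calI_S$ by $2$), so $h^0(\calI_S(2))=0$, not $6$. Since the ideal is generated by cubics, its square starts in degree $6$, and $cc'\ell$ has degree $7$. So none of the six quintics is a product of generators. Only the computational route survives, and it must check the scheme structure, not just the support.

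\textbf{Step~3 is not yet an argument.} The contracted divisor is not swept by secant lines. A secant line has $(5H-2E)\cdot\ell=5-4=1$, so it is not contracted, and in any case $\mathrm{Sec}(S)=\bP^5$ since $S$ is not the Veronese surface. Fixing $(a,b)$ via $-6H+2E=-6H'+2E'$ presupposes that $\sigma$ is already known to be the blowup of a smooth surface.

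You also do not address existence of the flop in the projective category. The paper gets it from the $K$-trivial small contraction given by the base-point-free system $|3H-E|$, which contracts exactly the planes (note $-K_{\bP^-}=2(3H-E)$), together with Kawamata's result.

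The paper then finishes as follows:
\begin{enumerate}
\item $\sigma\colon\bP^+\to\bP^5$ is a birational morphism of smooth varieties of relative Picard number one, so it has a prime exceptional divisor $F$.
\item For a very general cubic $X\supset S$, the strict transform $Z\sim 3H-E$ misses the planes, and $\sigma|_Z$ is the blowup $q\colon Z\to Y$ of Theorem~\ref{thm: cubic theorem}.
\item Then $\sigma^*Y=Z+mF\sim 3H'$ gives $mF\sim 12H-5E$. This class is primitive, so $m=1$ and $F=E'$.
\item $Z\sim 3H'-E'$ is $\sigma$-ample, so every fibre over $\sigma(E')$ meets $Z$, which forces $\sigma(E')=T$.
\item Andreatta--Wi\'sniewski then identifies $\sigma$ with $\Bl_T\bP^5\to\bP^5$.
\end{enumerate}
Your sentence ``the morphism is therefore $\Bl_T\bP^5$'' skips all of these steps.
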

The base locus of the linear system on $\Bl_S\bP^5$ consists of the strict transforms of the twenty planes swept out by tritangent lines to $S$; this is exactly the flopping locus of $\varphi$. We call $f$ an Enriques Cremona transformation.

A very general Enriques surface embedded in $\bP^5$ is contained in a 9-dimensional family of cubic fourfolds. It follows a posteriori that the construction of Theorem \ref{theorem: main} holds for the very general cubic in this family, and the corresponding partners all contain the same Enriques surface $T$.

\begin{remark}
    We thus obtain a rational involution on the moduli space of Fano-polarized Enriques surfaces, interchanging $S$ and $T$. We have not verified whether this involution is trivial, although we make some preliminary comparisons between the classes of $S$ and $T$ (as well as those of $X$ and $Y$) in the Grothendieck ring of varieties in Corollary~\ref{cor: L equivalence}.
\end{remark}

The authors were inspired to consider the case of cubics in $\calC_{44}$ due to the fact that a general cubic fourfold $X\in \calC_{44}$ has a unique nontrivial Fourier--Mukai partner \cite{FL23}. Recall that the Kuznetsov component of a cubic fourfold $X$ is the full triangulated subcategory \[
\Ku(X)=\langle\calO_X,\calO_X(1),\calO_X(2)\rangle^\perp\subset D^b(X).
\] Two cubic fourfolds $X$ and $Y$ are called {\bf{Fourier--Mukai partners}} if there exists an equivalence $\Ku(X)\simeq \Ku(Y)$.  
The very general cubic fourfold has no Fourier--Mukai partners, and the number of Fourier--Mukai partners for a very general member of $\calC_d$ is counted in \cite{FL23}. These counts have been generalised to the case of more algebraic cubic fourfolds in \cite{BBM25}. Further, Huybrechts posed the following conjecture:

\begin{conjecture}\cite{huycubicbook}\label{conj: Huy}
    Suppose $X$ and $Y$ are two cubic fourfolds with $\Ku(X)\simeq \Ku(Y)$. Then $X$ and $Y$ are birational.
\end{conjecture}

Fan and Lai produced corroboratory evidence for Huybrechts's conjecture in \cite{FL23} by studying Fourier--Mukai partners for cubics in $\calC_{20}$, the first value of $d$ for which nontrivial Fourier--Mukai partnerships occur. They produce the birational Fourier--Mukai partner via a Cremona transformation whose centre is a Veronese surface---the only possible Cremona transform of $\bP^5$ with irreducible base locus \cite{CK}. 
The authors of this paper, along with Frei, produced two more families of birational Fourier--Mukai partners in \cite{BFM}, both of codimension $1$ in $\calC_{12}$. One such example, namely non-syzygetic cubic fourfold, was recast using Gale duality in \cite{BBM25}. Here, we verify the conjecture for very general cubics in $\calC_{44}$:

\begin{theorem}
    Let $X$ be a very general cubic in $\calC_{44}$ and $Y$ the cubic constructed via Theorem~\ref{theorem: main}. Then $\Ku(X)\simeq \Ku(Y)$. In particular, Conjecture \ref{conj: Huy} holds for the very general member of $\calC_{44}$.
\end{theorem}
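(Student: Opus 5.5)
We need $\Ku(X)\simeq\Ku(Y)$ for $X$ very general in $\calC_{44}$ and $Y$ the partner from Theorem~\ref{theorem: main}. We will not build the equivalence directly. Instead we combine the lattice-theoretic count of Fan--Lai \cite{FL23} with the geometry of $Z=\Bl_SX\cong\Bl_TY$. For a very general $X\in\calC_{44}$, \cite{FL23} shows that the Fourier--Mukai partners of $X$, up to isomorphism, are exactly $X$ and one other cubic $X'$. They are distinguished Hodge-theoretically: a cubic $Y$ has $\Ku(Y)\simeq\Ku(X)$ iff there is a Hodge isometry $\widetilde H(\Ku(X),\bZ)\cong\widetilde H(\Ku(Y),\bZ)$ (orientation-preserving), which by the derived Torelli theorem for Kuznetsov components of very general special cubics reduces to an isometry of the associated Mukai lattices. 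So it suffices to prove two things: $Y\not\cong X$, and $\widetilde H(\Ku(Y),\bZ)\cong\widetilde H(\Ku(X),\bZ)$ as Hodge structures.

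\textbf{Non-isomorphism.} This is already part of Theorem~\ref{theorem: main}.

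\textbf{Hodge isometry.} We use $Z$. The blow-up formula gives
\[ H^4(Z,\bZ)\cong H^4(X,\bZ)\oplus H^2(S,\bZ)\cong H^4(Y,\bZ)\oplus H^2(T,\bZ) \]
up to Tate twists, with $S$ and $T$ contributing only algebraic classes. Hence the transcendental lattices satisfy $T_X\cong T_Y$ as Hodge structures, via the correspondence $[Z]\in CH^4(X\times Y)$. Concretely, the isomorphism is $p_{Y*}p_X^*$, suitably corrected. We check this is an isometry, up to sign, by computing the pairing on $Z$ using the intersection numbers of the two exceptional divisors $E_S,E_T$ and the classes $h_X,h_Y$, which are known from the construction (e.g.\ $h_Y=5h_X-2E_S$). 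For very general $X$ the Mukai lattice is $\widetilde H(\Ku(X),\bZ)=A_2$-type algebraic part plus $T_X$, with algebraic part $K_{44}^\perp$-related of rank $3$. A Hodge isometry on $T_X$ extends to the full Mukai lattices whenever the algebraic parts are isometric with compatible discriminant gluing. Both algebraic parts are rank-$3$ lattices of the same discriminant coming from $\calC_{44}$, and \cite{FL23} shows the gluing extends precisely for the partner.

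\textbf{Main obstacle.} A Hodge isometry $T_X\cong T_Y$ alone does not determine which of the two classes we are in, since $X$ itself also has $T_X\cong T_X$. The hard step is therefore to show the induced map on discriminant groups is the one that does \emph{not} come from an isometry $H^4(X)\cong H^4(Y)$ preserving $h^2$; otherwise $X\cong Y$ by Torelli. Since $Y\not\cong X$, any Hodge isometry $T_X\cong T_Y$ extending to the Mukai lattices forces $Y\cong X'$, given the count in \cite{FL23}. So it remains to show that the extension exists. First we will try to show that every Hodge isometry of transcendental lattices extends to the Mukai lattices. This should hold because the algebraic Mukai lattice of $\calC_{44}$ is unique in its genus and $O$ of its discriminant group surjects, which is the computation underlying \cite{FL23}'s count of partners. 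From this we conclude $\Ku(X)\simeq\Ku(Y)$ and, combined with Theorem~\ref{theorem: main}, Conjecture~\ref{conj: Huy} for very general $X\in\calC_{44}$.
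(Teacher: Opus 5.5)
Your proposal is correct and follows essentially the paper's route: the Hodge isometry $T(X)\cong T(Z)\cong T(Y)$ coming from the two blow-up decompositions of $H^4(Z,\bZ)_{\mathrm{tf}}$, which works because $S$ and $T$ contribute only algebraic classes; then extension to a Hodge isometry $\widetilde H(\Ku(X),\bZ)\cong\widetilde H(\Ku(Y),\bZ)$ by Nikulin; then Huybrechts's theorem; and finally non-isomorphism together with the Fan--Lai count for the conjecture. The paper phrases the extension step as uniqueness of the primitive embedding $T(X)\hookrightarrow\widetilde\Lambda$ (Nikulin's Theorem 1.14.4), which is equivalent to your uniqueness-in-genus plus surjectivity of $O(N)\to O(q_N)$ for the rank-$3$ algebraic part. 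Two simplifications: the discriminant-group bookkeeping in your ``main obstacle'' paragraph is not needed for $\Ku(X)\simeq\Ku(Y)$, since every Hodge isometry of transcendental lattices extends; and $T(X)\cong T(Y)$ is an honest isometry because $p^*$ and $q^*$ preserve the intersection form, so no sign correction or divisor intersection numbers are required.
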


\begin{remark}
    One may wonder about the lower discriminants for which the very general member is conjecturally irrational. For a very general cubic in $\calC_{12}$ and $\calC_{30}$, there are no nontrivial Fourier--Mukai partners. One could look for subfamilies of the divisor $\calC_{30}$, and hope for similar phenomena as in the case of $\calC_{12}$, as in \cite{BFM}.
    For $\calC_{36}$ the very general member has three nontrivial Fourier--Mukai partners. The strategy employed here of blowing up the special surface identified by \cite{nuer} and then blowing down to another cubic fourfold fails to produce a Fourier--Mukai partner; new techniques are needed. This is work in progress of the authors.
\end{remark}

In all known examples of pairs of cubic fourfolds $X$ and $Y$ satisfying Conjecture \ref{conj: Huy}, the Fano varieties of lines on the two cubics are birational (see \cite{BFM} for detailed discussion, and \cite{BFMQ24},\cite{BBM25},\cite{BGvBM25} for more examples). The authors conjecture that if $F(X)$ and $F(Y)$ are birational, then the underlying cubic fourfolds are also birational. 
This conjecture turns out to be closely related to Conjecture~\ref{conj: Huy}: we prove in Section \ref{sec: Fano} that any pair of very general cubics $X,Y\in\calC_d$ with $F(X)$ birational to $F(Y)$ must be Fourier--Mukai partners. Our construction adds to the body of evidence supporting this second conjecture:

\begin{theorem}[Theorem \ref{thm: bir Fano}]
    Let $X$ be a very general cubic in $\calC_{44}$ and $Y$ the partner cubic constructed in Theorem \ref{theorem: main}. Then $F(X)$ is isomorphic to $F(Y)$.
\end{theorem}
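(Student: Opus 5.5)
We prove $F(X)\cong F(Y)$ by a lattice-theoretic argument and the global Torelli theorem for hyperkähler fourfolds of $K3^{[2]}$-type. No explicit geometric isomorphism is constructed. The input is the previous theorem, $\Ku(X)\simeq\Ku(Y)$.

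\textbf{Step 1: Hodge isometry of Mukai lattices.} By Addington--Thomas and Huybrechts, an equivalence $\Ku(X)\simeq\Ku(Y)$ of Fourier--Mukai type induces a Hodge isometry $\widetilde H(\Ku(X),\bZ)\cong\widetilde H(\Ku(Y),\bZ)$. We must check that the equivalence from the previous theorem is of Fourier--Mukai type. It should be, since it arises from the geometry of $Z=\Bl_SX\cong\Bl_TY$, but if needed we invoke the result that all such equivalences are of this type.

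Both Mukai lattices contain the canonical $A_2$ spanned by $\lambda_1,\lambda_2$. The primitive cohomology $H^4_{\mathrm{prim}}(X,\bZ)$ is identified with $A_2^\perp$. Via the Abel--Jacobi map, $H^2_{\mathrm{prim}}(F(X),\bZ)$ with the Beauville--Bogomolov form is identified, up to sign, with $A_2^\perp$ as well. The Plücker class $g$ sits inside $H^2(F(X),\bZ)\cong \langle g\rangle\oplus g^\perp$ with $q(g)=6$.

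\textbf{Step 2: make the isometry preserve $A_2$.} For a very general $X\in\calC_{44}$ we have $\widetilde H^{1,1}(\Ku(X),\bZ)=A_2\oplus\bZ v$, a rank-$3$ lattice of discriminant $44$. Its transcendental lattice $T_X$ is the orthogonal complement. Since $X$ is very general, the only Hodge isometries of $T_X$ are $\pm1$.

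Restricting the isometry from Step 1 to transcendental lattices gives $\phi_T\colon T_X\cong T_Y$. Glue $\phi_T$ with an isometry of the algebraic parts $A_2\oplus\bZ v$ that sends $A_2$ to $A_2$. For this to work, $\phi_T$ must be compatible on discriminant groups with an automorphism of the algebraic lattice preserving $A_2$. We handle this with Nikulin's theory and the Fan--Lai count.

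For $d=44$ there are exactly two Fourier--Mukai partners, $X$ and $Y$. A Hodge isometry $H^4(X,\bZ)_{\mathrm{prim}}\cong H^4(Y,\bZ)_{\mathrm{prim}}$ would make $X\cong Y$ by Torelli for cubics, contradicting the non-isomorphism in Theorem~\ref{theorem: main}. So the extension does \emph{not} preserve $\lambda_1,\lambda_2$ individually compatibly with $h^2$. What we want instead is only an isometry $A_2^\perp(X)\cong A_2^\perp(Y)$ up to a twist.

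\textbf{Step 3: apply Torelli to $F(X)$.} Markman's Torelli theorem for $K3^{[2]}$-type says that $F(X)\cong F(Y)$ if and only if there is a Hodge isometry $H^2(F(X))\cong H^2(F(Y))$ that is a parallel transport operator and sends a Kähler class to a Kähler class. For $K3^{[2]}$-type, monodromy is the index-two subgroup of orientation-preserving isometries acting as $\pm1$ on the discriminant $\bZ/2$, so the parallel-transport condition reduces to orientation.

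The algebraic lattice is $\NS(F(X))=\langle g, \tau\rangle$, of rank $2$ and discriminant $-44$ up to normalisation. The key point is that the Hodge isometry need not send $g$ to $g$. Indeed it should not, because $X\not\cong Y$. We produce an isometry $\psi\colon H^2(F(X),\bZ)\cong H^2(F(Y),\bZ)$ extending $\pm\phi_T$ and acting on $\NS$ by an isometry of the rank-$2$ lattice. We then compose with $-1$ or with reflections in Hodge classes to fix orientation. Finally we check that $\psi$ maps the ample cone into the ample cone, or composes into it using the known Mori/nef cone structure of $F(X)$ for $d=44$ (Bayer--Macrì walls: classes of square $-2$ and of square $-10$ with divisibility $2$).

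\textbf{Main obstacle.} The hardest step is showing that the rank-$2$ lattice $\NS(F)$ for $d=44$ has no wall classes. That is, the binary form $6x^2-\tfrac{44}{6}\cdots$, written after normalising as $\langle 6\rangle$ glued with $\tau$, represents neither $-2$ nor $-10$ with the relevant divisibility. With no walls, $\Mov=\Nef$, and every orientation-preserving Hodge isometry preserving the positive cone can be adjusted by $\pm1$ to preserve the ample cone. I would first compute the quadratic form explicitly, using $q(\tau)$ chosen so that the discriminant is $44$, and then solve the Pell-type equations to rule out $-2$ and $-10$ classes.

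After this, the remaining work is a discriminant-group gluing check. It should follow because $F(X)$ and $F(Y)$ have isometric transcendental lattices, and for this genus the extension is unique by the Fan--Lai counting.
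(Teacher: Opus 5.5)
\textbf{The gap.} The gap is exactly the step you defer as ``a discriminant-group gluing check'', and the reason you give for it is not valid.

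Hodge-isometric transcendental lattices do not force an isometry $T(F(X))\cong T(F(Y))$ to extend over $\NS$ to $H^2(F(X),\bZ)\cong H^2(F(Y),\bZ)$. (For $d=44$, Hodge-isometric transcendental lattices are equivalent to $X$ and $Y$ being Fourier--Mukai partners.) The remark after Corollary~\ref{cor:BF imply FM} recalls that very general cubics in $\calC_{546}$ have partners whose Fano varieties are not even birational. The Fan--Lai count concerns $A(X)=\langle\eta_X,[S]\rangle$ with $\eta_X$ fixed. It is therefore blind to isometries of $\NS(F(X))$ that move the Pl\"ucker class, and those are exactly the ones you need.

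Concretely, $\NS(F(X))=\langle g,\lambda\rangle$ has Gram matrix $J_{44}$, whose discriminant is $-88$, not $-44$. It is glued to $T(F(X))$ along $\langle\beta\rangle\cong\bZ/44\bZ$ with $\beta=(g-3\lambda)/44$, and the isometry group of this glue group is $\{\pm1,\pm23\}$. The only Hodge isometries $T(X)\to T(Y)$ are $\pm\psi_T$, where $\psi_T$ is induced by $Z$. The non-isomorphism proof shows that $\psi_T$ acts by $23$ on discriminants, since $p^*\epsilon_X=23\,q^*\epsilon_Y$. Hence an extension exists if and only if some isometry of $\NS(F(X))$ acts on $\langle\beta\rangle$ by $\pm23$. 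Such an isometry cannot fix $g$, or Torelli for cubics would give $X\cong Y$.

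This is also why your Step~2 aims at the wrong target. Since $H^2(F)\cong\lambda_1^\perp$, you want a Hodge isometry of Mukai lattices with $\lambda_1\mapsto\pm\lambda_1$, not an isometry of $A_2^\perp$ ``up to a twist''.

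The missing idea is the explicit isometry $\phi\colon g\mapsto 5g+4\lambda$, $\lambda\mapsto-6g-5\lambda$. One checks $q(5g+4\lambda)=6$, $\mathrm{div}(5g+4\lambda)=2$, and $\phi(\beta)-23\beta=2\lambda$, so $\phi$ acts by $23$ on $\langle\beta\rangle$. Until this is supplied, nothing in your proposal distinguishes $d=44$ from $d=546$.

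\textbf{How your route closes, and how it compares.} With $\phi$ in hand your route does work, and it genuinely differs from the paper's.
\begin{enumerate}
\item Glue $\iota\circ\phi$ with $\psi_T$, where $\iota\colon g_X\mapsto g_Y$, $\lambda_X\mapsto\lambda_Y$. This is legitimate because, under the Abel--Jacobi map, the glue group is $\alpha(A(X)^\vee)/\NS(F(X))$ with $\alpha(\delta_X)=-\beta$. So the factor $23$ transfers from $D(A(X))$.
\item Since $q(g,5g+4\lambda)=38>0$, the positive cone is preserved. For a Hodge isometry this is the whole orientation condition, because $\mathrm{Mon}^2=O^+$ in $K3^{[2]}$-type. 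A global $-1$ fixes the cone if needed and preserves the gluing, so no reflections are required.
\item The absence of $(-2)$- and $(-10)$-classes in $J_{44}$ makes the K\"ahler cone the whole positive cone. This is an elementary check, not the main obstacle. Markman's Torelli then gives $F(X)\cong F(Y)$.
\end{enumerate}

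The paper instead uses $g'=5g+4\lambda$ and \cite[Proposition~2.3]{BFM} to produce a cubic $X'$ with $F(X')\cong F(X)$, via Proposition~\ref{prop: one min model}. It shows $X'\not\cong X$ using $\phi(\beta)=23\beta$ and Lemma~\ref{lem: action on subgroup}. It then identifies $X'\cong Y$ through Corollary~\ref{cor:BF imply FM} and the uniqueness of the partner \cite{FL23}.

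The paper's route never has to track $\psi_T$ through the Abel--Jacobi identification. Yours avoids the existence result of \cite{BFM} and the uniqueness result of \cite{FL23}, but depends on the explicit discriminant computation from the non-isomorphism proof.
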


\subsection*{Outline}
In Section~\ref{sec: Enriques}, we recall properties of an Enriques surface $S$ embedded in $\bP^5$ with its Fano polarisations and perform calculations related to the linear system of quintics containing $S$ with multiplicity $2$. We then restrict this linear system to a very general cubic fourfold containing $S$ in Section~\ref{sec: main const}, proving Theorem~\ref{thm: cubic theorem}. In Section~\ref{sec: cremona}, we study the Enriques Cremona transform, proving Theorem~\ref{thm: Cremona}. We end with some lattice-theoretic computations that relate the Fano varieties of lines of the cubics related by this Cremona transform, proving Theorem~\ref{thm: bir Fano}. We provide details in Appendix \ref{appendix} on the computations undertaken in the accompanying \verb|MAGMA| code.

\subsection*{AI disclosure}
 The main construction of this paper was discovered by the authors after OpenAI's GPT-5.6 Sol suggested looking at the linear system $5H-2E$ on $\Bl_S(X)$. We also used GPT-5.6 Sol to help write the accompanying \verb|MAGMA| code. All content in the article was written by the authors, and they take full responsibility for the correctness of all results and arguments in the paper.

\subsection*{Acknowledgements} The authors would like to thank Howard Nuer for his comments and pointing out the proof of Proposition \ref{prop: unique S}. The authors also are grateful to Lev Borisov and Brendan Hassett for their interest and comments on prior versions. 

\section{The Fano model of an Enriques surface}\label{sec: Enriques}

Throughout this article, $S$ denotes an unnodal Enriques surfaces, meaning an Enriques surface not containing any $-2$ curves. Every Enriques surface admits a so-called Fano polarisation:
\begin{definition}
    Let $A$ be a very ample line bundle on $S$ with $A^2=10$ and $A\cdot C\geq 3$ for every effective divisor $C$ with $C^2=0$. Then $A$ is called a \textbf{Fano polarisation} of $S$.
\end{definition}

Such a polarisation embeds $S$ into $\bP^5$ as a surface of degree $10$. We fix such an embedding $\varphi:S\hookrightarrow \bP^5$ and recall some known facts. First, the homogeneous ideal of $S$ in $\bP^5$ is generated by ten cubics, i.e. $h^0(\calI_{S}(3))=10$, and the ideal sheaf $\calI_{S}$ has the following pure resolution:
\begin{equation}\label{eqn: res of I}
    0\to \calO_{\bP^5}(-5)^{\oplus 6}\to \calO_{\bP^5}(-4)^{\oplus 15}\to \calO_{\bP^5}(-3)^{\oplus 10}\to \calI_{S}\to 0;
\end{equation}
for details, see \cite[Table 1]{SchTru}.

Moreover, there are classes $f_1,\dots,f_{10}\in\Num(S)$, called an isotropic $10$-sequence, such that $f_i^2=0$ and $3[A]=f_1+\dots+f_{10}$. Each class $f_i$ is represented by a unique curve $F_i$, embedded in $\bP^5$ as a cubic curve spanning a plane $\Pi_i$. Similarly, the unique curve $F_{-i}$ representing each divisor $f_i+K_S$ is also a cubic curve spanning a plane $\Pi_{-i}$. The trisecant variety of $S$, i.e. the locus swept out by trisecant lines to $S$ in $\bP^5$, is $\bigcup_{i=1}^{10}\Pi_i\cup\Pi_{-i}$. We record the incidence relations between these twenty planes:

\begin{lemma}\label{lem: int of planes}\cite[Section 3.5]{Enriques1}
Suppose that $S\subset\bP^5$ is a smooth Enriques surface embedded by a Fano polarisation, and let $\Pi_i$ and $\Pi_{-i}$ for $1\le i\le10$ be the planes swept out by trisecant lines to $S$, as described above. Then
\begin{enumerate}
    \item For each $i$, $\Pi_i\cap \Pi_{-i}=\varnothing$, and
    \item if $i\neq\pm j$, then $\Pi_i\cap\Pi_{j}=p_{ij}$ for smooth points $p_{ij}\in S$.
\end{enumerate}
\end{lemma}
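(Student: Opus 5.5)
The plan is to reduce both incidence statements to intersection numbers on $S$, using that every plane $\Pi_{\pm i}$ is cut out in a controlled way by $S$. First I would show that $\Pi_{\pm i}\cap S = F_{\pm i}$ scheme-theoretically. The ideal of $S$ is generated by cubics, by \eqref{eqn: res of I}, so $\Pi_i\cap S$ is cut out inside $\Pi_i\cong\bP^2$ by cubics containing the plane cubic $F_i$. If this intersection were strictly larger than $F_i$, every restricted cubic would vanish, and then $\Pi_i\subset S$, which is absurd. Hence $S\cap\Pi_i=F_i$. The same argument gives $S\cap\Pi_{-i}=F_{-i}$.

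For (1), I would first compute $F_i\cdot F_{-i}=f_i\cdot(f_i+K_S)=0$ in $\Num(S)$. Since $F_i\neq F_{-i}$ are distinct irreducible curves (unnodal, so the half-pencil members are irreducible), they are disjoint. Next I would show the spans are disjoint. Suppose $\Pi_i\cap\Pi_{-i}\neq\varnothing$. Then the two planes span at most a $\bP^4$, so some hyperplane $H$ contains $F_i\cup F_{-i}$. Now $H|_S=A$, and $A-f_i-(f_i+K_S)=A-2f_i-K_S$ would be effective. But $(A-2f_i-K_S)^2=10-12=-2$ and $A\cdot(A-2f_i-K_S)=10-6=4$. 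By Riemann--Roch, an effective class of square $-2$ on an unnodal surface must be a sum of curves including a $(-2)$-type configuration. To rule this out, I would use instead that $h^0(A-F_i-F_{-i})=0$. This follows from the Fano condition: $A\cdot f\ge 3$ for all isotropic $f$, combined with the known fact that $h^0(A-2f_i)=0$, since $(A-2f_i)^2=-2$ and on an unnodal Enriques surface every effective divisor has non-negative square (all curves have $C^2\ge 0$). This is the clean argument: on unnodal $S$ effective classes are nef, so square $\ge0$, contradiction.

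For (2), with $i\ne\pm j$ we have $f_i\cdot f_j=1$, since the classes form an isotropic 10-sequence. So $F_i$ and $F_j$ meet transversally in exactly one point $p_{ij}\in S$, and $p_{ij}\in\Pi_i\cap\Pi_j$. To show the intersection is exactly this point, suppose it contained a line $L$. Then $\Pi_i\cup\Pi_j$ spans a $\bP^3$, which lies in a pencil of hyperplanes. We would get $A-f_i-f_j$ effective with a moving part. Its square is $10-12+2=0$, so this class alone gives no contradiction. I would instead argue via $S\cap\bP^3$: a general hyperplane through the $\bP^3$ cuts a curve containing $F_i+F_j$, with residual $D\equiv A-f_i-f_j$ of degree $10-6=4$ and $D^2=0$. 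Then $D$ is a genus-one curve of degree $4$ (an elliptic quartic) lying in the pencil of hyperplane sections. Every member of $|D|$ would then lie in a hyperplane through the $\bP^3$, and in fact the base locus $S\cap\bP^3\supset F_i\cup F_j$ would force $D\cdot(F_i+F_j)$ to be accounted for. Concretely, $D\cdot f_i=3-0-1=2$, and $D$ meets $\bP^3$ in degree $4$ points while containing the 4 points of $D\cap(F_i\cup F_j)$. The expected main obstacle is exactly this step: excluding $\dim(\Pi_i\cap\Pi_j)\ge1$. My fallback is to invoke the explicit isotropic-sequence computation in \cite[Section 3.5]{Enriques1}: trisecant lines through $p_{ij}$ would yield a point of $\Pi_i\cap\Pi_j$ off $S$, giving an extra trisecant and contradicting that the trisecant variety is the disjoint union described above.
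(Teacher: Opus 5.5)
Your preliminary step $S\cap\Pi_{\pm i}=F_{\pm i}$ is correct: it holds scheme-theoretically, because $\calI_S$ is generated by cubics and $S$ contains no plane. Part (1) is also correct once the detour is removed. A hyperplane containing $\Pi_i\cup\Pi_{-i}$ makes $H|_S-F_i-F_{-i}$ an effective divisor of class $A-2f_i-K_S$ with square $-2$. That is impossible on an unnodal surface, where every effective divisor is nef. The sentences invoking Riemann--Roch and $h^0(A-2f_i)$ add nothing. Note that the paper does not prove this lemma; it only quotes \cite[Section 3.5]{Enriques1}. So there is no route to compare against, and a self-contained argument has to stand on its own.

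Part (2) has a genuine gap, at exactly the step you flag: you never rule out that $\Pi_i\cap\Pi_j$ contains a line $\ell$. The residual computation $D\equiv A-f_i-f_j$, $D^2=0$, $A\cdot D=4$ gives no contradiction as written. The fallback is not an argument either. The twenty planes are not a disjoint union; they meet at the points $p_{ij}$, which is what you are proving. A point of $\Pi_i\cap\Pi_j$ off $S$ just lies on trisecant lines in $\Pi_i$ and in $\Pi_j$, so it creates no new trisecant and contradicts nothing.

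The missing idea comes straight from your first step. If $\ell\subset\Pi_i\cap\Pi_j$, then
\[
\ell\cap S=\ell\cap F_i=\ell\cap F_j
\]
scheme-theoretically. Since the integral plane cubic $F_i$ contains no line, this is a subscheme of length $3$. It lies in both $F_i$ and $F_j$, hence in the scheme $F_i\cap F_j$. That scheme has length $F_i\cdot F_j=1$, a contradiction.

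Your pencil approach can also be finished, using the Fano condition. Hyperplanes through the $\bP^3$ spanned by the two planes give $h^0(A-F_i-F_j)\ge 2$. A nef isotropic class moving in a pencil is numerically $kE$ with $E$ a half-pencil and $k\ge 2$. Then $A\cdot E=4/k\le 2$, violating $A\cdot E\ge 3$.

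With either addition, $\Pi_i\cap\Pi_j$ is a single point, necessarily $p_{ij}\in F_i\cap F_j$.
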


We make use of the following observation about the normal bundle to $S$ in $\bP^5$:

\begin{lemma}\label{lem: chern classes}
    The Chern classes of the normal bundle $N_{S/\bP^5}$ are as follows:
    $$c_1( N_{S/\bP^5})= 6A+K_S, \qquad \deg c_2( N_{S/\bP^5})= 138. $$
\end{lemma}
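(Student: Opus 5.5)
We compute the Chern classes of $N = N_{S/\bP^5}$ from the normal bundle sequence
\[
0\to T_S\to T_{\bP^5}|_S\to N_{S/\bP^5}\to 0,
\]
which gives the multiplicativity relation $c(T_{\bP^5}|_S)=c(T_S)\,c(N)$ in the Chow ring of $S$, read in $\Num(S)$ and degrees of zero-cycles. We take $A$ to be the hyperplane class restricted to $S$, so that $c(T_{\bP^5}|_S)=(1+A)^6=1+6A+15A^2$, truncated at degree two. For the Enriques surface we use $c_1(T_S)=-K_S$, where $K_S$ is numerically trivial but $2$-torsion in $\Pic$. We also use $\deg c_2(T_S)=\chi_{\mathrm{top}}(S)=12$, which follows from Noether's formula with $\chi(\calO_S)=1$ and $K_S^2=0$.

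In degree one the relation reads $6A=c_1(T_S)+c_1(N)=-K_S+c_1(N)$. Hence $c_1(N)=6A+K_S$, and this holds as a class in $\Pic(S)$, not merely numerically, since the normal bundle sequence holds integrally. Equivalently, $c_1(N)=\det N = K_S\otimes \omega_{\bP^5}^{-1}|_S = K_S+6A$ by adjunction, which we record as a consistency check.

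In degree two the relation reads $15A^2=c_2(T_S)+c_1(T_S)c_1(N)+c_2(N)$. Substituting $A^2=10$, $c_2(T_S)=12$, and $c_1(T_S)\cdot c_1(N)=(-K_S)\cdot(6A+K_S)=-6K_S\cdot A - K_S^2=0$ (because $K_S$ is numerically trivial), we obtain $150=12+0+\deg c_2(N)$. Therefore $\deg c_2(N)=138$.

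As a sanity check we would compare with the double point formula for a surface in $\bP^4$, or verify $\deg c_2(N)=d^2-\ldots$ by a self-intersection argument, but the computation above is complete. The only point needing care is that $K_S\neq 0$ in $\Pic(S)$, so $c_1(N)$ must be stated with the $K_S$ term, while all intersection numbers involving $K_S$ vanish. We do not expect a real obstacle here.
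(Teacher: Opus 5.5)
Your proof is correct and follows the paper's argument: the restricted Euler sequence gives $c(T_{\bP^5}|_S)=(1+A)^6$, and comparing coefficients in $c(T_{\bP^5}|_S)=c(T_S)\,c(N_{S/\bP^5})$, using $c_2(T_S)=12$ from Noether's formula and the numerical triviality of $K_S$, yields both Chern classes. The closing remark about a double point formula for surfaces in $\bP^4$ does not apply to this situation and can simply be dropped.
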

\begin{proof}
    Restricting the Euler sequence on $\bP^5$ to $S$ yields the exact sequence
    \[
    0\to \calO_S\to \calO_S(A)^{\oplus 6}\to T_{\bP^5}|_S\to 0,
    \]
    from which we compute $c_t(T_{\bP^5}|_S)=(1+At)^6=1+6At+150t^2$, with higher-order terms vanishing since $S$ is a surface. Moreover, the short exact sequence defining the normal bundle to $S$,
    \[
    0\to T_S\to T_{\bP^5}|_S\to N_{S/\bP^5}\to 0,
    \]
    yields $c_t(T_{\bP^5}|_S)=c_t(T_S)c_t(N_{S/\bP^5})$. We have that $c_1(T_S)=-K_S$ and since $\chi(\calO_S)=1$ and $K_S^2=0$ we have $c_2(T_S)=12$ by applying Noether's formula. It follows that:
    \begin{align*}
    1+6At+150t^2&=(1-K_St+12t^2)c_t(N_{S/\bP^5})\\
    &=1+(c_1(N_{S/\bP^5})-K_S)t+(12+0+c_2(N_{S/\bP^5}))t^2,
    \end{align*}
    using the fact that $K_S$ is numerically trivial. Comparing coefficients finishes the proof.
\end{proof}

We later use the linear system of quintics in $\bP^5$ that have multiplicity 2 along $S$, coming from the sections of $\calI_S^2(5)$.

\begin{proposition}\label{prop: dim of quintics}
    For a general Fano model of an Enriques surface $S\subset\bP^5$, we have $h^0(\calI_{S}^2(5))=6$.
\end{proposition}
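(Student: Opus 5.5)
We compute $h^0(\calI_S^2(5))$ from the exact sequence
\[
0\to \calI_S^2(5)\to \calI_S(5)\to (\calI_S/\calI_S^2)(5)=N^\vee_{S/\bP^5}(5)\to 0.
\]
The plan is to compute $h^0(\calI_S(5))$ exactly, to compute $\chi(N^\vee_{S/\bP^5}(5))$ via Riemann--Roch using Lemma~\ref{lem: chern classes}, and then to control the rank of the restriction map $H^0(\calI_S(5))\to H^0(N^\vee_{S/\bP^5}(5))$.

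\textbf{Step 1.} From the resolution \eqref{eqn: res of I}, twisted by $5$, all higher cohomology of the terms vanishes. Hence
\[
h^0(\calI_S(5))=10\cdot 21-15\cdot 6+6\cdot 1=126 .
\]
As a check, $h^0(\calO_{\bP^5}(5))=252$ and $h^0(\calO_S(5A))=\chi=1+\tfrac12\cdot 250=126$. This also gives $h^1(\calI_S(5))=0$.

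\textbf{Step 2.} The bundle $N^\vee(5)=N^\vee\otimes\calO_S(5A)$ has rank $2$, with
\[
c_1=-(6A+K_S)+10A=4A-K_S .
\]
For $c_2$, use $c_2(N^\vee\otimes L)=c_2(N^\vee)+c_1(N^\vee)L+L^2$ with $L=5A$:
\[
c_2=138-6\cdot 5\cdot 10+250=88 .
\]
Riemann--Roch for a rank-$2$ bundle on an Enriques surface ($\chi(\calO_S)=1$, $K_S\equiv 0$) gives
\[
\chi=2+\tfrac12 c_1^2-c_2=2+80-88=-6 .
\]
So $\chi(\calI_S^2(5))=126-(-6)=132$ fails to give the answer directly, and the sign shows the restriction map cannot simply be surjective onto a space of dimension $\chi$. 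We therefore need $h^0(N^\vee(5))$ itself.

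\textbf{Step 3.} The expected behaviour is that $h^0(N^\vee(5))=120$, so that the restriction map $H^0(\calI_S(5))\to H^0(N^\vee(5))$ is surjective with $6$-dimensional kernel. Equivalently, $h^0(\calI_S^2(5))=6$ together with $h^1(\calI_S^2(5))=0$ forces $h^0(N^\vee(5))=120$, and hence $h^1(N^\vee(5))=126$ (taking $h^2=0$ by Serre duality, since $N(-5)\otimes K_S$ has no sections).

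To get the lower bound $h^0(\calI_S^2(5))\ge 6$, I would exhibit six explicit quintics. The natural source is the linear syzygies among the ten cubics: the $6\times 15$ and $15\times 10$ matrices in \eqref{eqn: res of I}. A more concrete idea is to produce quintics in $\calI_S^2$ as suitable combinations $\sum \ell_{jk}q_jq_k$ of products of cubics, divided by a linear form. These cannot literally be degree-$5$ products, so the real source has to be the syzygy structure.

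In practice I expect the cleanest route to be computational, and the paper's appendix suggests this. Pick an explicit (e.g. random, over a finite field) Fano-model Enriques surface, for instance via its Reye congruence or via the Artin--Fano construction. Compute $\dim(I_S^{(2)})_5=6$ in \verb|MAGMA|. Then upper semicontinuity gives $h^0\le 6$ for the general $S$.

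For the lower bound on a general member, note that the symbolic square has constant Hilbert function on an open set. Alternatively, find a family-wide construction, such as six quintics obtained as $6\times 6$ minors or as Pfaffian-type expressions from the last syzygy map $\calO(-5)^6\to\calO(-4)^{15}$. The main obstacle is exactly this step: proving the lower bound $6$ uniformly, rather than only $\le 6$ by semicontinuity. I would first try to identify the six quintics with the six columns of the final syzygy matrix, interpreted via the dual complex.
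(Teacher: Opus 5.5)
There is a genuine gap. It starts in Step 2 and propagates through the rest of the argument. $S$ has codimension $3$ in $\bP^5$, so $N_{S/\bP^5}$ has rank $3$, not $2$. With $r=3$ you get $c_1(N^\vee(5A))=-(6A+K_S)+15A=9A-K_S$ and $c_2(N^\vee(5A))=c_2(N^\vee)+2\,c_1(N^\vee)\cdot 5A+3(5A)^2=138-600+750=288$. Riemann--Roch then gives $\chi(N^\vee(5A))=3+405-288=120$, not $-6$.

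Moreover, $N^\vee(5A)$ has no higher cohomology. Tensoring the twisted resolution $\calO^{\oplus 6}\to\calO(1)^{\oplus 15}\to\calO(2)^{\oplus 10}\to\calI_S(5)\to 0$ with $\calO_S$ gives a right exact sequence $\calO_S(A)^{\oplus 15}\to\calO_S(2A)^{\oplus 10}\to N^\vee(5A)\to 0$. Split it into two short exact sequences, and use $H^i(\calO_S(mA))=0$ for $i,m>0$ (Kodaira) together with $H^3=0$ on a surface; this gives $h^1=h^2=0$. So your assertion $h^1(N^\vee(5))=126$ is false, and in fact $h^0(N^\vee(5A))=120$ for every Fano model.

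This is exactly what removes the obstacle you single out. The conormal sequence, with $h^0(\calI_S(5))=126$ and $h^1(\calI_S(5))=0$, gives $h^0(\calI_S^2(5))=6+h^1(\calI_S^2(5))\ge 6$ uniformly, with no need for explicit quintics.

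As written, your proposal never establishes this lower bound. Knowing that $h^0(\calI_S^2(5))$ is constant on a dense open set does not tell you the constant is $6$: a single computed example with value $6$ only shows the generic value is $\le 6$. The syzygy and Pfaffian constructions are left as speculation.

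Once the Euler characteristic is correct, the argument closes as in the paper. $h^1(\calI_S^2(5))$ is upper semicontinuous in the flat family, so one example with $h^0=6$ (equivalently $h^1=0$) gives equality for general $S$. Your proposal also does not address that an example computed over a finite field must be lifted to characteristic zero. The paper does this as follows: Enriques surfaces lift, $A$ lifts since $H^2(\calO_S)=0$, and semicontinuity is then applied over the DVR.
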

\begin{proof}
    First, we can twist the sequence \ref{eqn: res of I} by $\calO_{\bP^5}(5)$ to get:
    \begin{equation}
        0\to \calO_{\bP^5}^{\oplus 6}\to \calO_{\bP^5}(1)^{\oplus 15}\to \calO_{\bP^5}(2)^{\oplus 10}\to \calI_{S}(5)\to 0.
    \end{equation}
    Writing $\calK:=\coker(\calO_{\bP^5}^{\oplus 6}\to \calO_{\bP^5}(1)^{\oplus 15})$, we obtain the two short exact sequences below:
    \begin{align*}
        &0\to \calK\to \calO_{\bP^5}(2)^{\oplus 10}\to \calI(5)\to 0\\
        &0\to \calO_{\bP^5}^{\oplus 6}\to \calO_{\bP^5}(1)^{\oplus 15}\to \calK\to 0.
    \end{align*}
    The long exact sequences in cohomology yield $h^0(\calI_S(5))=126$ and  $h^i(\calI_S(5))=0$ for $i>0$.
    
    We now analyse the defining sequence for the conormal bundle, twisted by $\calO_{\bP^5}(5)$:
    \begin{equation}\label{eqn: def of conormal}
        0\to \calI_S^2(5)\to \calI_S(5)\to  N_{S/\bP^5}^\vee(5)\to 0.
    \end{equation}
    We first claim that $h^i(S,  N^\vee_{S/\bP^5}(5))=0$ for $i>0$.
    To prove this, tensor the sequence \ref{eqn: res of I} by $\calO_S(5A)$ to get the exact sequence below:
    \begin{equation} \dots \to \calO_S(A)^{\oplus 15}\to \calO(2A)^{\oplus10}\to N_{S/\bP^5}^\vee(5A)\to 0.
    \end{equation}
    Denote by $\calK:=\ker(\calO_S(2A)^{\oplus 10}\to  N_{S/\bP^5}^\vee(5A))$, and by $\calL:=\ker(\calO_S(A)^{\oplus 15}\to \calK)$. We obtain two short exact sequences: 
    \begin{align*} 
    &0\to \calK\to \calO_S(2A)^{\oplus 10}\to  N_{S/\bP^5}^\vee(5A)\to 0\\ 
    &0\to \calL\to \calO_S(A)^{\oplus 15}\to \calK\to 0. 
    \end{align*} 
    By the Kodaira vanishing theorem, $H^i(S,\calO_S(mA))=0$ for $i>0, m>0$. Since $S$ is a surface, $H^3(S,\calF)=0$ for any sheaf $\calF$. Applying the long exact sequence of cohomology to both sequences gives $H^1(S, N_{S/\bP^5}^\vee(5A))=H^2(S, N_{S/\bP^5}^\vee(5A))=0$, as desired. In particular, $h^0( N_{S/\bP^5}^\vee(5A))=\chi( N_{S/\bP^5}^\vee(5A))$.

    By Riemann--Roch for a rank-three bundle on a surface,
    $$\chi( N_{S/\bP^5}^\vee(5A))=3\chi(\calO_S)+\frac{1}{2}c_1( N_{S/\bP^5}^\vee(5A))(c_1( N_{S/\bP^5}^\vee(5A))-K_S)-c_2( N_{S/\bP^5}^\vee(5A)). $$
    Note that by Lemma \ref{lem: chern classes}, we have $c_1( N_{S/\bP^5}^\vee(5A))=c_1( N_{S/\bP^5}^\vee)+3c_1(5A) = 9A-K_S$.
    We also have:
    \begin{align*}
        c_2( N^\vee(5A))&=c_2( N_{S/\bP^5}^\vee)+2c_1( N_{S/\bP^5}^\vee)5A+3(5A)^2,\\
        &=c_2( N_{S/\bP^5}^\vee)+2(-6A-K_S)(5A)+75A^2,
    \end{align*}
    and using Lemma~\ref{lem: chern classes}, we have $\deg c_2( N_{S/\bP^5}^\vee(5A))= 138-600+750=288$. It follows that 
    \begin{align*}
        h^0( N_{S/\bP^5}^\vee(5A))=\chi( N_{S/\bP^5}^\vee(5A))&=3+\frac 12 (9A-K_S)(9A)-288\\
        &=3+405-288= 120.
    \end{align*}
    Returning to the sequence~\ref{eqn: def of conormal} and taking the long exact sequence on cohomology yields 
    \begin{align*}
        h^0(\bP^5, \calI_S^2(5))&=h^0(\bP^5,\calI_S(5))-h^0(S,  N_{S/\bP^5}^\vee(5A))+h^1(\bP^5,\calI^2_S(5))\\
        &= 126- 120+h^1(\bP^5,\calI^2_S(5))\\
        &= 6+h^1(\bP^5,\calI^2_S(5)).
    \end{align*}
    Thus the expected dimension of $H^0(\bP^5,\calI_S^2(5))$ is $6$. To show it is equal to $6$, it suffices to construct a single example. We compute this using the accompanying \verb|MAGMA| code, explained in Appendix~\ref{app: quintics} and \ref{app: lift}.
\end{proof}

Let $\pi:\bP:=\Bl_S\bP^5\rightarrow \bP^5$ be the blow up along $S\subset \bP^5$, let $H$ be the pullback of the hyperplane class, and let $E$ be the exceptional divisor. By the definition of the blow up, a quintic $q\in H^0(\bP^5,\calI^2_S(5))$ pulls back to a  section of the line bundle $\pi^*\calO_{\bP^5}(5)\otimes \calI^2_S\cdot \calO_\bP = \calO_\bP(5H-2E)$.

Consider the rational map $\bP^5\dashrightarrow \bP(H^0(\calI^2_S(5))^\vee)\cong \bP^5;$ by the above this lifts to a rational map $\bP\dashrightarrow \bP^5 $ given by the linear system $|5H-2E|$.

\begin{proposition}\label{prop: base locus}
    Let $S$ be an unnodal Fano model of an Enriques surface and $\bP:=\Bl_S\bP^5$. For general $S$, the linear system $L=5H-2E$ on $\bP$ has
    \[
    \mathrm{Base}|L|=\bigcup_{i=1}^{10}\widetilde\Pi_i\cup \widetilde\Pi_{-i}
    \]
    where $\widetilde\Pi_i$ and $\widetilde\Pi_{-i}$ are the strict transforms of $\Pi_i$ and $\Pi_{-i}$ in $\bP$, respectively. Moreover, the union above is disjoint.
\end{proposition}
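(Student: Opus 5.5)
We prove the two inclusions separately and then check disjointness.

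\emph{The planes lie in the base locus.} Fix $i$ and write $\Pi=\Pi_{\pm i}$, with $F\subset\Pi$ the plane cubic $F_{\pm i}=S\cap\Pi$. Let $q\in H^0(\calI_S^2(5))$. Then $q|_\Pi$ is a plane quintic singular along the cubic $F$. Its divisor is therefore at least $2F$, which has degree $6>5$, so $q|_\Pi\equiv 0$. Hence $\Pi$ lies in the base locus of the quintics on $\bP^5$. On $\bP$ the same argument shows $\widetilde\Pi\subset\mathrm{Base}|L|$. Indeed, $\widetilde\Pi\cong\Pi$ because $\Pi\cap S=F$ is a Cartier divisor on $\Pi$, and $L|_{\widetilde\Pi}=5\ell-2F=-\ell$, which has no nonzero sections.

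As a numerical sanity check, a line $\ell\subset\Pi$ has $L\cdot\widetilde\ell=5-2\cdot 3=-1$. This is the expected intersection number for an ordinary flop of planes with normal bundle $\calO(-1)^{\oplus 2}$.

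\emph{No other base points.} This is the main obstacle. I would argue in three steps.

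First, on $\bP^5\setminus S$, the base locus is contained in the common zero locus of the products $c_ac_b$ of pairs of the ten cubics generating $I_S$, adjusted by a linear form. More precisely, the quintics $\ell\,c_ac_b$, with $\ell$ linear and $c_a,c_b$ among the cubic generators, are sections of $\calI_S^2(6)$, not $\calI_S^2(5)$. So products alone do not suffice, and a different description is needed.

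Instead I would use the six quintics explicitly. The resolution \eqref{eqn: res of I} has six linear second syzygies. I expect the six quintics to be the $5\times5$ minor-type (or Pfaffian-type) expressions built from the $15\times 6$ linear syzygy matrix $M$ and the cubics. A point $p\notin S$ is then a base point exactly when $M(p)$ drops rank. The degeneracy locus of this syzygy matrix off $S$ should be the trisecant variety $\bigcup\Pi_{\pm i}$: a point on a trisecant line imposes a dependency among the cubic relations. This is consistent with Proposition~\ref{prop: dim of quintics} and the identification of the trisecant variety recalled above.

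Second, over $E$, I would show the base locus meets $E=\bP(N_{S/\bP^5})$ only in $\widetilde\Pi_{\pm i}\cap E$. The restriction of $L$ to $E$ is $5A+2\xi$ on the fiber bundle, where $\xi$ is the tautological class. Quintics double along $S$ restrict to sections of $\mathrm{Sym}^2 N^\vee(5A)$. I would check, fiberwise over $p\in S$, that the induced conics on $\bP(N_p)\cong\bP^2$ have no common zero unless $p$ lies on some $F_{\pm i}$, in which case the common zero is the normal direction of $\Pi_{\pm i}$.

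Third, since both of these steps are hard to make fully rigorous for general $S$, I would fall back on upper semicontinuity. The base locus being exactly the twenty strict transforms is an open condition in the family of Fano models. It therefore suffices to verify it for one explicit example via the \verb|MAGMA| computation, by saturating the base ideal of the six quintics and of their lift to the Rees algebra. The first inclusion already holds for every $S$.

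\emph{Disjointness.} By Lemma~\ref{lem: int of planes}, $\Pi_i\cap\Pi_{-i}=\varnothing$. For $i\neq\pm j$ the planes meet only at the point $p_{ij}\in S$, which lies on $F_i\cap F_j$. At $p_{ij}$ the tangent planes of $\Pi_i$ and $\Pi_j$ are distinct; they meet only in $p_{ij}$, so $T\Pi_i+T\Pi_j$ is 4-dimensional. Their images in $N_{S,p_{ij}}=T\bP^5/T_pS$ are therefore distinct normal lines. Concretely, $T\Pi_i\cap T_pS=T_pF_i$ and $T\Pi_j\cap T_pS=T_pF_j$, and these differ because $F_i\cdot F_j=f_i\cdot f_j=1$ gives a transversal intersection. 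Hence the points $\widetilde\Pi_i\cap\pi^{-1}(p_{ij})$ and $\widetilde\Pi_j\cap\pi^{-1}(p_{ij})$ are distinct, and the strict transforms are disjoint.
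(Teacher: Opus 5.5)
Your proposal is correct and takes essentially the same approach as the paper. The planes are forced into the base locus by a degree count: you restrict to $\Pi$ and use that $2F$ has degree $6>5$, while the paper applies Bezout to trisecant lines. The absence of other components is settled, as in the paper, by computing one explicit example and invoking openness. Disjointness comes from $F_i$ and $F_j$ meeting transversely at $p_{ij}$, and your tangent-space argument makes this more explicit than the paper does.

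A few small points. Your speculative syzygy and fibrewise-conic steps can simply be dropped; note also that $\ell c_ac_b$ has degree $7$, not $6$. In the fivefold $\bP$ the planes have normal bundle $\calO(-1)^{\oplus 3}$, not $\calO(-1)^{\oplus 2}$. Finally, if the example is computed over a finite field, you still need the lift to characteristic zero, which the paper supplies in its appendix.
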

\begin{proof}
    First, note that if $\ell\subset\bP^5$ is trisecant to $S$, Bezout's theorem tells us that $\ell$ is contained in each quintic in the linear system $|L|$. Recalling that the planes $\Pi_i$ and $\Pi_{-i}$ are swept out by trisecant lines to $S$, we get $\Pi_i\cup\Pi_{-i}\subset \mathrm{Base}|L|$ for each $i$.

    To verify that the base locus contains no other components, it remains to check that
    \[
    W=\bigcap_{Q\in|L|}Q
    \]
    has degree $20$ and no embedded lower-dimensional components. For this, we need only check one example, done with the accompanying \verb|MAGMA| code and explained in Appendix \ref{app: base locus} and \ref{app: lift}.

    Note that for $i,j\in\{\pm1,\dots,\pm10\}$ with $i\neq -j$, the planes $\Pi_i$ and $\Pi_j$ meet at a single point $p_{ij}\in S$ (Lemma \ref{lem: int of planes}), which coincides with the intersection point of the curves $F_i$ and $F_j$. Since the curves intersect transversely, it follows that $\widetilde{\Pi}_i$ and $\widetilde{\Pi}_j$ are disjoint. All other pairs of planes are disjoint by Lemma \ref{lem: int of planes}.
\end{proof}

\section{Cubics in $\calC_{44}$}\label{sec: main const}

In this section, we prove our main result:

\begin{theorem}\label{thm: cubic theorem}
    Let $X\in \calC_{44}$ be a very general cubic fourfold containing a (unique) very general Fano model of an Enriques surface $S$. Let $p:Z:=\Bl_SX\rightarrow X$ be the blowup, $E$ its exceptional divisor, and $H$ the pullback of the hyperplane class. Then the linear system $|5H-2E|$ defines a morphism $q:Z\rightarrow \bP^5$ that is birational onto its image. Further,
    \begin{enumerate}
        \item the image of $q$ is a smooth cubic fourfold $Y$ birational but not isomorphic to $X$,
        \item $Z$ is isomorphic to the blow up of $Y$ in a Fano embedded Enriques surface $T\subset Y$,
        \item the inverse to $X\dashrightarrow Y$ is given by the linear system $|5H'-2E'|$ on $Z$, where $H'=5H-2E$ and $E'=12H-5E$, and
        \item $Y$ is the unique nontrivial Fourier--Mukai partner of $X$; i.e., $\Ku(Y)\simeq \Ku(X)$.
    \end{enumerate}
\end{theorem}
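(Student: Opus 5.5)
Our plan is to compute the intersection theory on $Z=\Bl_SX$ and then use the base-locus result, Proposition~\ref{prop: base locus}, to control $|5H-2E|$. Since $X$ is very general in $\calC_{44}$, it contains none of the twenty trisecant planes $\Pi_{\pm i}$: a cubic containing $S$ and a plane would have algebraic lattice of rank $\ge 3$. Hence $X\cap\Pi_{\pm i}$ is a cubic curve in $\Pi_{\pm i}$ that contains the plane cubic $F_{\pm i}\subset S$, so it equals $F_{\pm i}$. Its strict transform in $\Bl_S\bP^5$ therefore meets $Z=\Bl_SX\subset\Bl_S\bP^5$ in a set that lies over $F_{\pm i}\subset S$. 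We will check, using the disjointness in Proposition~\ref{prop: base locus} and transversality of $\widetilde\Pi_{\pm i}$ to $\widetilde X$, that $\widetilde\Pi_{\pm i}\cap Z=\varnothing$. So $|5H-2E|$ restricted to $Z$ is base point free.

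For surjectivity of the restriction $H^0(\calI_S^2(5))\to H^0(Z,5H-2E)$, we use $0\to\calI_S(2)\to\calI_S^2(5)\to\calI^2_{S\subset X}(5)\to 0$. The cubic equation of $X$ lies in $\calI_S$, so multiplication by it gives the first map. The vanishing $h^1(\calI_S(2))=0$ follows from \eqref{eqn: res of I}, and $h^0(\calI_S(2))=0$, so $h^0(Z,5H-2E)=6$. We then compute intersection numbers on $Z$ from $H^4=3$, $E|_E$ and the Chern classes of $N_{S/X}$, which come from Lemma~\ref{lem: chern classes} and $N_{S/X}=N_{S/\bP^5}/\calO_S(3A)$. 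We expect $(5H-2E)^4=3$ and $-K_Z=3H-E$, with $(-K_Z)^4=21$, matching the later Proposition~\ref{prop: Z Fano}. Degree $3$ forces $q$ to be birational onto a cubic hypersurface $Y$, or of degree $3$ onto a linear space, which is impossible since the image is a hypersurface not contained in a hyperplane.

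Next we identify $q$ as the blow-down of a divisor. Put $H'=5H-2E$ and $E'=12H-5E$. A lattice computation gives $3H'-E'=3H-E=-K_Z$, which is consistent, and the transformation $(H,E)\mapsto(H',E')$ is an involution on $\Pic Z$. We show $E'$ is effective by producing the divisor contracted by $q$, namely the locus of secant lines to $S$ inside $X$, or the Jacobian-type quartic-degree component. Then $q$ contracts $E'$ onto a surface $T$, and $q$ has relative Picard rank $1$ because $\rho(Z)=2$. Since $Z$ is smooth and $q$ is a $K$-negative extremal contraction of a divisor to a surface, Ando's theorem shows $Y$ is smooth along $T$ and $q=\Bl_TY$. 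Smoothness of $Y$ elsewhere holds because $q$ is an isomorphism off $E'$. To identify $T$, we compute $K_T$, $\deg T=(H')^2E'\cdot(\ldots)$ and $\chi(\calO_T)=1$ with $2K_T=0$ from the Chern data. We then use uniqueness of the Enriques surface in very general members of $\calC_{44}$ (Nuer). This gives (2), and (3) follows by symmetry since $|5H'-2E'|=|H|$.

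For (1) non-isomorphism and (4), we compare lattices. The map $q$ gives a Hodge isometry between the transcendental lattices $T(X)$ and $T(Y)$, since both equal $T(Z)$ up to the blow-up summands. We then compute the induced map on $A(X)=\langle h^2,S\rangle$ and $A(Y)$ via the classes of $S$ and $T$. We check that the composite is not induced by a Hodge isometry extending $h^2\mapsto h^2$ on $H^4(X,\bZ)$, using the fact that a very general $X$ has only $\pm\id$ as Hodge isometries of $T(X)$. This gives $X\not\cong Y$. Then by \cite{FL23} (Addington–Thomas/Huybrechts) the Hodge isometry of Mukai lattices yields $\Ku(X)\simeq\Ku(Y)$, and uniqueness of the partner gives (4). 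We expect this last lattice step, showing the isometry is not the trivial one, and the base-point-freeness transversality check to be the main obstacles.
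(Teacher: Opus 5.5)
Your outline follows the paper's overall route: no trisecant planes in $X$ gives base-point-freeness, $(H')^4=3$ gives a birational map onto a cubic, then Ando's theorem, the discriminant-group comparison, and Nikulin plus Huybrechts. Your sequence $0\to\calI_S(2)\to\calI_S^2(5)\to\calI_{S/X}^2(5)\to0$ is a nice way to see that the restricted quintics form the complete system $|5H-2E|$.

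The genuine gap is the appeal to Ando's theorem. It requires every fibre of $q$ to have dimension at most one, and you never rule out isolated two-dimensional fibres of $E'\to T$. Such fibres do occur for $K$-negative elementary divisorial contractions of smooth fourfolds onto surfaces. For example, blowing up $\bC^4$ along the cone over a twisted cubic gives a smooth fourfold with a $\bP^2$ fibre over the vertex; there the target is smooth but the centre is not. By the classification of good contractions, the components of such fibres are $(\bP^2,\calO(1))$ or quadrics $(Q,\calO_Q(1))$ with respect to $-K_Z$. The paper excludes them in Lemma~\ref{lem: no surface fibres} by a lattice argument that uses $\rank A(X)=2$ essentially. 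For such a component $F$, with $L_F=-K_Z|_F$ and $k=L_F^2\in\{1,2\}$, one has $H'|_F=0$ and $E'|_F=-L_F$, hence $H|_F=2L_F$ and $E|_F=5L_F$. Writing $[F]=p^*r+j_*\pi^*\alpha$ with $r=a\eta_X+bS\in A(X)$, the projection formula gives $3a+10b=4k$ and $10a+48b=5k$, so $b=-25k/44\notin\bZ$. Without this step you get neither smoothness of $T$, nor $Z\cong\Bl_TY$, nor smoothness of $Y$ along $T$.

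Several other steps also do not work as written.
\begin{itemize}
\item The divisor you propose to exhibit as $q$-exceptional is wrong. Contracted curves satisfy $5H\cdot C=2E\cdot C$, while a bisecant line $\ell\subset X$ has $H'\cdot\widetilde\ell=5-4=1$, so it is not contracted. Effectivity of $E'$ should instead come from $K_Z-q^*K_Y=(-3H+E)+3H'=12H-5E$ together with the negativity lemma. This is how the paper shows $E'$ is the prime exceptional divisor.
\item Your claim that $Y$ is smooth off $T$ because $q$ is an isomorphism off $E'$ presupposes that $Y$ is normal. This is not automatic for the image of a linear system. The paper obtains smoothness of $Y$ from an explicit example over $\bF_{101}$ and a lifting argument.
\item Invoking Nuer's uniqueness to identify $T$ is circular, since $Y\in\calC_{44}$ is what is being proved. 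Instead, compare the two decompositions of $H^*(Z,\bZ)_{\mathrm{tf}}$ to get $q(T)=p_g(T)=0$ and $\chi_{\mathrm{top}}(T)=12$. Then use $(H')^2(E')^2=-10$ and $H'(E')^3=-30$ with adjunction to get $\deg T=10$ and $A_T\cdot K_T=0$. Finally, conclude $K_T\equiv0$ from Noether's formula and the Hodge index theorem.
\end{itemize}
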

In Section \ref{subsec: c44}, we explicitly define a \emph{very general cubic fourfold in} $\mathcal{C}_{44}$ and recall some basic properties. 
In Section \ref{subsec: int theory}, we develop the intersection theory needed to prove Theorem \ref{thm: cubic theorem}. In Section \ref{subsec: construction}, for a very general cubic $X\in \calC_{44}$ we construct the partner cubic $Y$ as well as the birational link $X\dashrightarrow Y$. Finally, in Section \ref{subsec: verification}, we verify that $X$ and $Y$ are not isomorphic and prove that $Y$ is indeed the unique nontrivial Fourier--Mukai partner of $X$.

The variety $Z$ may be of independent interest - a corollary of our result is that $Z$ is a Fano fourfold of Picard rank 2.

\subsection{Cubic fourfolds containing Fano models of Enriques surfaces}\label{subsec: c44}
Recall that the homogeneous ideal of a Fano model of an Enriques surface $S$ in $\bP^5$ is generated by ten cubics.
So, $S$ embeds in many cubic fourfolds. 
Conversely, the very general cubic fourfold $X$ in the discriminant $44$ locus $\calC_{44}$ contains a Fano model of an Enriques surface $S$ by \cite[Theorem 1.1]{nuer}. In general, $h^0( N_{S/X})=0$, so $S$ does not deform in $X$. Again under a generality assumption, the rank of $A(X):=H^{2,2}(X)\cap H^4(X,\bZ)$ is $2$, with the intersection pairing given by
\begin{center}
\begin{tabular}{r|rr}
& $\eta_X$ & $S$  \\ \hline
$\eta_X$ & $3$   & $10$   \\
$S$ & $10$   & $48$  \\
\end{tabular}
\end{center}
where $\eta_X$ is the square of the hyperplane class. It turns out that when $\rank(A(X))=2$, there is in fact only one Fano model of an Enriques surface contained in $X$, which we will prove later in Proposition~\ref{prop: unique S}.

We use the following observation in applying generality assumptions throughout the paper:

\begin{lemma}\label{lem: general X contains general S}
    A very general cubic fourfold $X$ in $\calC_{44}$ contains a Fano model of an Enriques surface $S$ such that
    \begin{itemize}
        \item $S$ contains no $-2$ curves,
        \item $h^0(\calI_{S}^2(5))=6$, and
        \item the base locus of the linear system $|5H-2E|$ on $\Bl_S\bP^5$ from Proposition~\ref{prop: base locus} consists of twenty planes and no other components.
    \end{itemize} 
\end{lemma}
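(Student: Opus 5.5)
The plan is to use an incidence-variety argument: each of the three conditions holds on a nonempty Zariski open subset of the relevant parameter space, and very generality lets us impose all of them at once. Let $\mathcal{M}$ be the moduli space of Fano-polarised Enriques surfaces, an irreducible $10$-dimensional space. Over $\mathcal{M}$ consider the irreducible family $\mathcal{S}$ of Fano models $S\subset\bP^5$, obtained by choosing a basis of $H^0(S,A)$ modulo scalars. Let
\[
\calI=\{(S,X): S\subset X\}\subset \mathcal{S}\times|\calO_{\bP^5}(3)|,
\]
which over $\mathcal{S}$ is a projective bundle with fibre $\bP(H^0(\calI_S(3)))\cong\bP^9$. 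In particular $\calI$ is irreducible. By \cite[Theorem 1.1]{nuer}, the second projection $\calI\to|\calO(3)|$ dominates the preimage of $\calC_{44}$. Since $\calC_{44}$ is irreducible of codimension one, the image of $\calI$ is dense in (the preimage of) $\calC_{44}$.

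Next we show that each condition cuts out a dense open subset of $\mathcal{S}$. Unnodality is very general on $\mathcal{M}$: the nodal locus is a divisor in $\mathcal{M}$, and the complement of the $-2$ classes is a countable intersection of opens. Since the earlier results are stated for unnodal surfaces, it is enough to work on the complement of the nodal divisor. The condition $h^0(\calI_S^2(5))=6$ is open by upper semicontinuity: the computation in Proposition~\ref{prop: dim of quintics} gives $h^0\ge 6$ everywhere, and the MAGMA example shows that $h^0=6$ is attained at some point. On the locus where $h^0=6$, the spaces $H^0(\calI_S^2(5))$ form a vector bundle, so the base scheme $W$ of the quintics varies in a flat family after shrinking to an open set. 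The conditions that $W$ has degree $20$ and has no embedded or extra components are then open: they say that the Hilbert polynomial equals that of the twenty planes together with $S$, and that $W$ equals the union of the planes. The single example from Appendix~\ref{app: base locus} shows this open set is nonempty. Each open set therefore pulls back to a dense open subset of the irreducible $\calI$.

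It remains to transfer this to $X$. Let $U\subset\calI$ be the intersection of these dense opens, with the nodal locus removed as a countable union. The image of the complement $\calI\setminus U$ in $\calC_{44}$ is a countable union of constructible sets. None of them is dense, since the map $\calI\to\calC_{44}$ is dominant and the complement has smaller dimension. One subtlety needs care. If the general fibre of $\calI\to\calC_{44}$ had positive dimension, a point $X$ could have fibres meeting both $U$ and its complement, but that still suffices because we only need $X$ to contain some good $S$. More precisely, for $X$ outside the closure of the image of $\calI\setminus U$, every $S\subset X$ lies in $U$. In fact, by the dimension count $\dim\calI=\dim\mathcal{S}+9=10+35+9=54=\dim\calC_{44}$, so the fibres are generically finite, consistent with $h^0(N_{S/X})=0$.

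I expect the main obstacle to be making the openness of the base-locus condition rigorous, i.e.\ arguing that the base schemes form a flat family. I would handle this by stratifying $\mathcal{S}$ by the Hilbert polynomial of $W$ and noting that the generic stratum is open; the MAGMA example then identifies the generic Hilbert polynomial as the good one. A further point to check is that the MAGMA example is defined over a finite field or $\mathbb{Q}$. I would treat this as in Appendix~\ref{app: lift}, by semicontinuity on lifting to characteristic zero.
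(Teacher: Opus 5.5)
Your proposal is correct and is essentially the paper's argument: the incidence correspondence is a $\bP^9$-bundle over the $45$-dimensional family of Fano models, so it has dimension $54=\dim\calC_{44}$; Nuer's theorem makes the second projection dominant, hence generically finite, onto $\calC_{44}$; and conditions holding for very general $S$ (which the paper simply imports from Propositions~\ref{prop: dim of quintics} and~\ref{prop: base locus}) then hold for some $S$ contained in a very general $X$. The one soft spot is your extra openness argument for the base-locus condition: a single example need not lie in the generic flattening stratum, so it cannot by itself identify the generic Hilbert polynomial; what actually makes the scheme-theoretic equality $W=2S\cup\bigcup_i\Pi_i$ an open condition is the containment $2S\cup\bigcup_i\Pi_i\subseteq W$, valid for every $S$ by B\'ezout, combined with semicontinuity (or Nakayama plus properness, as in Appendix~\ref{app: lift}).
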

\begin{proof}
Let $\mathbb{S}$ be the Hilbert scheme of Fano-embedded Enriques surfaces in $\bP^5$, which is a smooth variety of dimension $45$ by \cite[Proposition 5.7.7.]{Enriques1}, and consider the flag Hilbert scheme $I\subset\mathbb{S}\times\bP(H^0(\bP^5,\calO_{\bP^5}(3)))$ consisting of pairs $(S,X)$ such that $S\subset X$. Note that the first projection $\pi_1:I\to\mathbb{S}$ is a $\bP^9$ bundle because the homogeneous ideal of each Enriques surface $S$ is generated by ten cubics. Hence $\dim(I)=54$. Since a very general $S\in\mathbb{S}$ satisfies the three desired conditions by Propositions~\ref{prop: dim of quintics} and~\ref{prop: base locus}, the same can be said for the first component of a very general point $(S,X)\in I$. We next note that the image of the second projection $\pi_2:I\to\bP(H^0(\bP^5,\calO_{\bP^5}(3)))$ is the locus of cubic fourfolds of discriminant $44$ by \cite[Theorem 3.2]{nuer}; in particular, the image is a divisor, and the general fibre of $\pi_2$ is finite. The result follows.
\end{proof}

\begin{remark}\label{rmk: very general}
From here onward, we use the term \emph{very general cubic $X\in \calC_{44}$} to mean that $A(X)$ has rank $2$, the group of Hodge isometries of the transcendental cohomology $T(X):=A(X)^\perp\subset H^4(X,\bZ)$ consists of only $\pm \id_{T(X)}$, and $X$ contains a Fano model of an Enriques surface $S$ satisfying the three generality conditions of Lemma~\ref{lem: general X contains general S}. Recall that the lack of $-2$ curves also guarantees that  the planes $\Pi_i$ spanned by cubic curves on $S$ intersect as in Lemma~\ref{lem: int of planes}. All of these conditions hold outside a countable union of closed loci in $\calC_{44}$. In what follows, we always assume that $X$ is very general in $\calC_{44}$.
\end{remark}

For a very general cubic in $\calC_{44}$, we record the discriminant group.
\begin{lemma}\label{lem: discriminant}
    Let $X\in \calC_{44}$ be very general. Then we have 
    $D(A(X))=\bZ/44\bZ$ with generator $\left[\frac{3S-10\eta_X}{44}\right]$.
\end{lemma}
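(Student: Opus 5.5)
The plan is to compute the discriminant group directly from the Gram matrix recorded above. Since $A(X)$ has rank $2$, it is generated by $\eta_X$ and $S$. The first point to settle is that these two classes span $A(X)$ over $\bZ$, and not just a finite-index sublattice. I would take this from \cite{nuer}, or from the standard description of the generic $A(X)$ for $X\in\calC_d$ as the lattice $K_d$ of discriminant $d$. Independently, the Gram matrix
\[
\begin{pmatrix} 3 & 10\\ 10 & 48\end{pmatrix}
\]
has determinant $144-100=44$. An overlattice of index $k$ would have discriminant $44/k^2$, so the only possibility is $k=2$, giving discriminant $11$. That is impossible: $X$ has discriminant $44$ by definition of $\calC_{44}$, and Hassett's discriminant is the determinant of the saturated lattice $A(X)$. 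So $\langle \eta_X,S\rangle=A(X)$, and $|D(A(X))|=44$.

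Next I would write the dual lattice explicitly. The inverse Gram matrix is
\[
\frac{1}{44}\begin{pmatrix} 48 & -10\\ -10 & 3\end{pmatrix},
\]
so $A(X)^\vee$ is generated by $\eta^*=\tfrac{48\eta_X-10S}{44}$ and $S^*=\tfrac{-10\eta_X+3S}{44}$. The discriminant group $A(X)^\vee/A(X)$ has order $44$. To show it is cyclic generated by $v=\tfrac{3S-10\eta_X}{44}=S^*$, I check two things. First, $v$ pairs integrally with both generators: $v\cdot\eta_X=(30-30)/44=0$ and $v\cdot S=(144-100)/44=1$. Second, I determine the order of $v$ modulo $A(X)$. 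The element $kv$ lies in $A(X)$ exactly when $44$ divides both $3k$ and $10k$. Since $\gcd(3,10)=1$, this forces $44\mid k$, so the order is $44$. Hence $D(A(X))=\bZ/44\bZ$, generated by $[v]$.

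Finally, I would record the discriminant form for later use:
\[
q(v)=\frac{9\cdot 48-60\cdot 10+100\cdot 3}{44^2}=\frac{132}{1936}=\frac{3}{44}\pmod{2\bZ}.
\]
This is a sanity check, not part of the statement.

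The only step that is more than arithmetic is the first one, namely that $\eta_X$ and $S$ generate $A(X)$ integrally. I expect this to be handled by citing Hassett's definition of discriminant together with the parity argument above, which rules out the single possible index-$2$ overlattice.
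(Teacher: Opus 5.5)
Your proposal is correct and is essentially the paper's argument. The paper notes that $3S-10\eta_X$ is primitive of divisibility $44$, which is exactly your checks $v\cdot\eta_X=0$, $v\cdot S=1$ and $\gcd(3,10)=1$, so its quotient by $44$ has order $44$ in a group of order $\det A(X)=44$; your additional verification that $\eta_X,S$ span $A(X)$ integrally is taken for granted in the paper, and your index-$2$ argument handles it correctly.
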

\begin{proof}
    Since $3S-10\eta_X$ is a primitive class of divisibility $44$, $\left[\frac{3S-10\eta_X}{44}\right]$ is indeed an element of $D(A(X))$ of order $44$. Moreover, the discriminant of $A(X)$ is $44$, so $|D(A(X))|=44$. The result follows.
\end{proof}

\subsection{Intersection theory}\label{subsec: int theory}

\begin{lemma}\label{lem: chern on X}
    We have $c_1(N_{S/X})= 3A+K_S$ and $c_2(N_{S/X})= 48$.
\end{lemma}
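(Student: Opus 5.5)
We compute these classes the same way as in Lemma~\ref{lem: chern classes}, this time using the normal bundle sequence for the chain $S\subset X\subset\bP^5$:
\[
0\to N_{S/X}\to N_{S/\bP^5}\to N_{X/\bP^5}|_S\to 0 .
\]
Since $X$ is a cubic hypersurface, $N_{X/\bP^5}|_S\cong\calO_S(3A)$. Whitney's formula then gives
\[
c_t(N_{S/\bP^5})=c_t(N_{S/X})\,(1+3At).
\]
Lemma~\ref{lem: chern classes} supplies the left-hand side: $c_1(N_{S/\bP^5})=6A+K_S$ and $\deg c_2(N_{S/\bP^5})=138$.

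The first Chern class comes out directly. Comparing degree-one terms gives
\[
c_1(N_{S/X})=6A+K_S-3A=3A+K_S.
\]

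For the second Chern class, compare degree-two terms:
\[
c_2(N_{S/\bP^5})=c_2(N_{S/X})+3A\cdot c_1(N_{S/X}).
\]
We have $3A\cdot(3A+K_S)=9A^2=90$, because $A^2=10$ and $K_S$ is numerically trivial. Hence
\[
\deg c_2(N_{S/X})=138-90=48.
\]

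As an independent check, use the self-intersection formula in $X$: $\deg c_2(N_{S/X})=S\cdot S=48$, which agrees with the intersection table for $A(X)$. One could also run the computation through the tangent sequence $0\to T_S\to T_X|_S\to N_{S/X}\to0$. Restricting the sequence $0\to T_X\to T_{\bP^5}|_X\to\calO_X(3)\to0$ to $S$ gives $c_t(T_X|_S)=(1+At)^6/(1+3At)$, and this yields the same result.

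There is no real obstacle here. The only thing to track carefully is that $K_S$ is numerically trivial, so $K_S\cdot A=0$ and every cross term involving $K_S$ vanishes.
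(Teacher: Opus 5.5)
Your proof is correct and is essentially the paper's own argument. You use the normal bundle sequence for $S\subset X\subset\bP^5$ with $N_{X/\bP^5}|_S\cong\calO_S(3A)$, apply Whitney's formula together with Lemma~\ref{lem: chern classes}, and compare the linear and quadratic terms to obtain $3A+K_S$ and $138-90=48$; the self-intersection comparison with $S^2=48$ is a harmless consistency check rather than a needed step.
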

\begin{proof}
    Since $X\subset \bP^5$ is a smooth cubic hypersurface, $N_{X/\bP^5}|_S=\calO_X(3)\otimes \calO_S=\calO_S(3A)$, and $c_t(N_{X/\bP^5}|_S)=1+3At$. 
    From the exact sequence of normal bundles
    \[
    0\to N_{S/X}\to N_{S/\bP^5}\to N_{X/\bP^5}|_S\to 0,
    \]
    and Lemma~\ref{lem: chern classes}, we have 
    \[
    1+(6A+K_S)t+138t^2=c_t(N_{S/X})(1+3At).
    \]
    Comparing linear terms yields $c_1(N_{S/X})= 3A+K_S$, and then comparing quadratic terms yields $c_2(N_{S/X})=138-(3A+K_S)\cdot3A=48$.
\end{proof}

Although we will not use the following fact, it is a nice observation that the Enriques surface $S$ in $X$ is unique:

\begin{proposition}\label{prop: unique S}
    A very general cubic fourfold of discriminant $44$ contains exactly one Fano model of an Enriques surface.
\end{proposition}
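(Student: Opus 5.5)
The plan is to use the lattice $A(X)$ to show that the class of any Fano-model Enriques surface in $X$ is forced, and then to show that the class determines the surface.

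\textbf{Step 1: the class is forced.} Let $S'\subset X$ be any Fano model of an Enriques surface. Since $S'$ is algebraic, $[S']\in A(X)=\langle \eta_X,S\rangle$, so we may write $[S']=a\eta_X+bS$. Because $S'$ has degree $10$, $\eta_X\cdot S'=10$, which gives $3a+10b=10$. The self-intersection $S'^2$ is computed by the double point formula:
\[
S'^2=c_2(N_{S'/X})=48,
\]
using Lemma~\ref{lem: chern on X}. Indeed, that computation only used that $S'$ is a Fano-embedded Enriques surface in a smooth cubic. This gives a second equation,
\[
3a^2+20ab+48b^2=48.
\]
Substituting $a=(10-10b)/3$ and simplifying:
\begin{align*}
\tfrac{(10-10b)^2}{3}+\tfrac{200b(1-b)}{3}+48b^2&=48,\\
100-200b+100b^2+200b-200b^2+144b^2&=144,\\
44b^2&=44.
\end{align*}
So $b=\pm1$. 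The case $b=-1$ gives $a=20/3$, which is not an integer. Hence $b=1$, $a=0$, and $[S']=[S]$.

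\textbf{Step 2: the surface is determined by its class.} We now need to show that $S'=S$ as subvarieties. Suppose $S'\neq S$. Then $S\cdot S'=48$ would be the intersection of two distinct surfaces, so we must rule this out geometrically. There are two candidate routes.
\begin{itemize}
\item The route I would try first is a dimension count via Lemma~\ref{lem: general X contains general S}. The incidence variety $I$ maps generically finitely onto the divisor $\calC_{44}$. So it suffices to show that the general fibre of $\pi_2$ has degree one, i.e.\ that $\pi_2$ is birational onto its image.
\item An alternative is to use a Hodge-theoretic and moduli description, following \cite{nuer}. Fano-model Enriques surfaces with a fixed class are governed by a K3-type category or the relevant moduli space. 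Uniqueness of an object with the given Mukai vector would then follow from the moduli space being a point: the Mukai vector $v$ has $v^2=-2$ in the relevant lattice of rank two, and rigid stable objects are unique.
\end{itemize}

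Concretely, I expect the cleanest argument to be the following. The ideal sheaf (or a projection of $\calO_{S'}$ into $\Ku(X)$) gives an object whose class in the Mukai lattice is fixed by Step 1. The vanishing $h^0(N_{S/X})=0$ shows the object is rigid. Stability with respect to a generic stability condition, together with $\rank A(X)=2$ so that there are no walls destabilising it, then gives uniqueness via the standard fact that a rigid stable object with primitive Mukai vector of square $-2$ is unique.

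\textbf{Main obstacle.} The hard step is Step 2: passing from uniqueness of the class to uniqueness of the surface. Step 1 is routine arithmetic. In Step 2, stability of the associated object is where the most work lies, and I would lean on Nuer's construction of $S$ in \cite{nuer} as a moduli-theoretic locus to get it.
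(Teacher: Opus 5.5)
Your Step 1 is correct and matches the paper's lattice computation giving $[S']=[S]$. The gap is Step 2, which is the real content of the proposition, and you do not actually carry it out.

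Your first route is circular. Showing that the general fibre of $\pi_2$ has degree one just restates the claim, and you give no mechanism for it. Your second route rests on two assertions that are neither routine nor justified. The first is that $\calI_{S/X}(2)$ and $\calI_{S'/X}(2)$ are both $\sigma$-stable for one and the same stability condition $\sigma$ on $\Ku(X)$. The second is that $\rank A(X)=2$ rules out walls; it does not. Note also that the algebraic part of $\widetilde H(\Ku(X),\bZ)$ has rank $3$, not $2$, when $\rank A(X)=2$. Without stability of \emph{both} objects, Mukai's uniqueness of stable spherical objects does not apply, so as written the argument does not close.

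The missing observation is that stability is not needed at all. In Mukai's argument, stability only serves to force $\Hom$ to vanish in both directions between non-isomorphic objects. For ideal sheaves of distinct surfaces this is automatic: since $\calI_{S/X}^{\vee\vee}=\calO_X$, any map $\calI_{S/X}\to\calI_{S'/X}\subset\calO_X$ is a scalar. It is nonzero only if $S'\subseteq S$, i.e.\ $S'=S$.

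The paper's proof then runs as follows.
\begin{enumerate}
\item By \cite[Proposition 6.3]{nuer}, both $\calI_{S/X}(2)$ and $\calI_{S'/X}(2)$ lie in $\Ku(X)$, whose Serre functor is $[2]$.
\item Since $h^0(N_{S/X})=0$, the object $\calI_{S/X}(2)$ is spherical, so $\chi(\calI_{S/X}(2),\calI_{S/X}(2))=2$.
\item Grothendieck--Riemann--Roch shows that the Chern classes of $\calI_{S/X}$ are determined by $[S]$, $c_1(N_{S/X})=3A+K_S$ and $c_2(N_{S/X})=48$. The last two are the same for every Fano model, so Step 1 gives $[\calI_{S/X}]=[\calI_{S'/X}]$ in the numerical Grothendieck group. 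Hence $\chi(\calI_{S/X}(2),\calI_{S'/X}(2))=2$.
\item The two Hom-vanishings, together with Serre duality in $\Ku(X)$, kill $\Ext^0$, $\Ext^2$ and all $\Ext^i$ with $i>2$. This leaves $\chi=-\dim\Ext^1\le 0$, a contradiction.
\end{enumerate}

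Your phrase ``class in the Mukai lattice is fixed by Step 1'' silently needs the GRR step in (3). Step 1 only fixes $[S']\in A(X)$, not a priori the full numerical class of the ideal sheaf.
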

\begin{proof}
    We may assume that $X$ contains at least one Fano model of an Enriques surface $S$ and that $\rank(A(X))=2$. 
    By \cite[Proposition 6.3]{nuer}, $\calI_{S/X}(2)\in\Ku(X)$, and as the author mentions, $\calI_{S/X}(2)$ is spherical. 
    Indeed, \[
    \dim\Hom(\calI_{S/X}(2),\calI_{S/X}(2))=\dim \Hom(\calI_{S/X},\calI_{S/X})=1.
    \]
    since $\calI_{S/X}^{\vee\vee}\cong \calO_X$, and for $f:\calI_{S/X}\to \calI_{S/X}$, we have $f^{\vee\vee}\in \mathrm{End}(\calO_X)=\bC$. 
    We also see that
    \[
    \Ext^1(\calI_{S/X}(2),\calI_{S/X}(2))\cong\Ext^1(\calI_{S/X},\calI_{S/X})\cong T_{[S]}\Hilb(X)=0
    \]
    since $H^0(S,N_{S/X})=0$, and by \cite[Proposition 6.3]{nuer}, $\calI_{S/X}(2)\in\Ku(X)$ so
    \[
    \Ext^2(\calI_{S/X}(2),\calI_{S/X}(2))=\Hom(\calI_{S/X}(2),\calI_{S/X}(2))^\vee
    \]
    since the Serre functor on $\Ku(X)$ is a shift by $2$.
    
    Now, suppose for the sake of contradiction that $X$ contains another Fano model of an Enriques surface, $S'$. 
    Then $\deg(S')=10$, and $S'$ gives a discriminant $44$ marking on $X$, so $(S')^2=48$. Writing $S'=a\eta_X+bS$ in $A(X)$, we have
    \[
    3a+10b=10\qquad\text{and}\qquad 3a^2+20ab+48b^2=48.
    \]
    The only integer solution is $a=0$ and $b=1$, so $[S]=[S']$. 

    Grothendieck--Riemann--Roch gives
    \[
    c_t(\calI_{S/X})=1+[S]t^2+i_*c_1(N_{S/X})t^3+i_*(c_1(N_{S/X})^2)t^4,
    \]
    where $i:S\to X$ is the inclusion. By Lemma~\ref{lem: chern on X}, $c_1(N_{S/X})=3i^*h_X+K_S$, and $K_S$ is numerically trivial, so 
    \[
    c_t(\calI_{S/X})=1+[S]t^2+3h_X[S]t^3+90t^4. 
    \]
    Performing the same calculation for $S'$ and using that $[S]=[S']$, we find that $[\calI_{S/X}]=[\calI_{S'/X}]$ in the numerical Grothendieck group.

     In particular,
    \[
    \chi(\calI_{S/X}(2),\calI_{S'/X}(2))=\chi(\calI_{S/X}(2),\calI_{S/X}(2))=2.
    \]
    
    On the other hand, $S'\not\subset S$, so 
    \[
    \Hom(\calI_{S/X}(2),\calI_{S'/X}(2))\cong\Hom(\calI_{S/X},\calI_{S'/X})=0,
    \]
    and again using \cite[Proposition 6.3]{nuer}, we have $\calI_{S'/X}(2)\in\Ku(X)$, so
    \[
    \Ext^2(\calI_{S/X}(2),\calI_{S'/X}(2))\cong\Hom(\calI_{S'/X}(2),\calI_{S/X}(2))^\vee=0
    \]
    and
    \[
    \Ext^i(\calI_{S/X}(2),\calI_{S'/X}(2))\cong \Ext^{2-i}(\calI_{S'/X}(2),\calI_{S/X}(2))^\vee=0
    \]
    for $i>2$.
    So,
    \begin{align*}
    2=\chi(\calI_{S/X}(2),\calI_{S'/X}(2))&=-\dim\Ext^1(\calI_{S/X}(2),\calI_{S'/X}(2))\le0,
    \end{align*}
    a contradiction.
\end{proof}

Let $Z=\Bl_S X$, with $p:Z\rightarrow X$ the blow up. Denote by $H$ the pullback of the hyperplane class and by $E$ the exceptional divisor. 
\begin{lemma}\label{lem: int no. Z}
    We have $\Pic(Z)=\bZ[H]\oplus \bZ[E]$ with the following intersection numbers:
    $$H^4=3, \qquad H^3E=0, \qquad H^2E^2=-10, \qquad HE^3=-30, \qquad E^4=-42. $$
\end{lemma}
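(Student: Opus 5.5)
The plan is to use the standard blow-up formalism for a smooth surface $S$ in a smooth fourfold $X$, and to feed in the Chern data of $N_{S/X}$ from Lemma~\ref{lem: chern on X}.

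First, the Picard group. Since $X$ is a smooth cubic fourfold, Lefschetz gives $\Pic(X)=\bZ h_X$. The blow-up formula for Picard groups then yields $\Pic(Z)=p^*\Pic(X)\oplus\bZ[E]=\bZ[H]\oplus\bZ[E]$.

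Next, the intersection numbers. Write $j:E\hookrightarrow Z$ for the inclusion and $g:E=\bP(N_{S/X})\to S$ for the projection. Set $\xi:=-E|_E=c_1(\calO_{\bP(N)}(1))$, using the convention where $\calO_E(E)=\calO_{\bP(N)}(-1)$. For any class $\alpha$ we have $E\cdot\alpha=j_*(\alpha|_E)$, and $H|_E=g^*A$. The numbers are then computed as follows.

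\begin{enumerate}
\item $H^4=\deg X=3$.
\item $H^3E=\int_E g^*A^3=0$, since $A^3=0$ on a surface.
\item $H^2E^2=-\int_E g^*A^2\,\xi$. Because the fibres of $g$ are $\bP^1$'s, $g_*\xi=1$, so this equals $-A^2=-10$.
\item $HE^3=\int_E g^*A\,\xi^2$.
\item $E^4=-\int_E\xi^3$.
\end{enumerate}

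For the last two items I would use the Grothendieck relation on the rank-$2$ bundle $N=N_{S/X}$:
\[
\xi^2-g^*c_1(N)\,\xi+g^*c_2(N)=0 .
\]
This gives $g_*\xi^2=c_1(N)$, and then
\[
g_*\xi^3=c_1(N)^2-c_2(N).
\]
These are the Segre classes up to the sign convention. Substituting Lemma~\ref{lem: chern on X}:
\begin{itemize}
\item $HE^3=A\cdot(3A+K_S)=30$;
\item $E^4=-\bigl((3A+K_S)^2-48\bigr)=-(90-48)=-42$, using that $K_S$ is numerically trivial.
\end{itemize}

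The main obstacle is fixing the sign convention consistently, because the target value is $HE^3=-30$. This means I must decide between $\bP$ of lines and $\bP$ of quotients, and equivalently check whether $\xi$ satisfies the relation with $c_1(N)$ or with $-c_1(N)$. With $\calO_E(-E)=\calO(1)$ on the projectivization of lines in $N$, the tautological subbundle $\calO(-1)\subset g^*N$ gives
\[
\sum_i \xi^{2-i}c_i(g^*N)=0 ,
\]
that is, $\xi^2+c_1\xi+c_2=0$. This yields $g_*\xi^2=-c_1$ and $g_*\xi^3=c_1^2-c_2$, so
\[
HE^3=\int_E g^*A\,\xi^2=-30,
\]
and $E^4=-(c_1^2-c_2)=-42$.

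As a sanity check on the conventions, I would verify the known answer for a point blown up in a surface, or use the general formula $E^4=-(s_2\text{-type term})$, which equals $-\deg c_2(N_{S/X})+\ldots$. I would also cross-check the pair of values $H^2E^2=-\deg S=-10$ and $HE^3=-\deg c_1(N)\cdot A$ against the usual formulas for blow-ups of surfaces in fourfolds.
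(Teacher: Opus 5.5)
Your final computation is correct and is essentially the paper's proof: the paper also sets $\zeta=c_1(\calO_E(1))$ with $j^*E=-\zeta$ on the projectivization of lines in $N_{S/X}$. It uses $\zeta^2+\pi^*c_1(N_{S/X})\zeta+\pi^*c_2(N_{S/X})=0$ to get $\pi_*\zeta^2=-c_1(N_{S/X})$ and $\pi_*\zeta^3=c_1(N_{S/X})^2-c_2(N_{S/X})$, hence $HE^3=-30$ and $E^4=-42$. You should discard your first displayed relation $\xi^2-g^*c_1(N)\xi+g^*c_2(N)=0$: it is the quotient-convention relation, it is inconsistent with your choice $\xi=-E|_E$, and it is what produced the wrong value $HE^3=30$.
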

\begin{proof}
    The first two intersection numbers are clear since $S$ has codimension two in the cubic fourfold $X$.

    Recall that $E=\bP(N_{S/X})$, and we have the following diagram:
    $$\begin{tikzcd}
        E\arrow[r, hookrightarrow, "j"]\arrow[d, "\pi"]& Z\arrow[d,"p"]\\
        S\arrow[r, hookrightarrow, "i"]& X
    \end{tikzcd}$$
    Using notation from \cite[Section 13.6]{3264}, write $\zeta:=c_1(\calO_E(1))$. Then $j^*E=-\zeta$.
    We also have $j^*H=\pi^*A$, and for any class $\alpha\in A(Z)$ we have $\alpha E=j_*(j^*\alpha)$.
    
    From \cite[Theorem 9.6]{3264}, the chow ring of $E$ is
    $$A^*(E)=A^*(S)[\zeta]/(\zeta^2+\pi^*c_1(N_{S/X})\zeta+\pi^*c_2(N_{S/X})), $$
    hence $\zeta^2=-\pi^*c_1(N_{S/X})\zeta-\pi^*c_2(N_{S/X})$. Pushing this relation forward and applying the push-pull formula (\cite[Lemma 9.7]{3264}) gives $\pi_*(\zeta^2)=-c_1(N_{S/X})$, and multiplying the relation by $\zeta$ and pushing forward gives $\pi_*(\zeta^3)= c_1(N_{S/X})^2-c_2(N_{S/X})$.

    We compute $H^2E^2=j_*(j^*(H^2E))$.
    Thus we compute 
    $$j^*(H^2E)=-\pi^*(A^2)\zeta= -A^2\pi_*(E)=-A^2=-10, $$
     where the second equality is via pushing forward to $S$. 

     Next $HE^3= j_*(j^*(HE^2))$. Thus 
     $j^*(HE^2)=\pi^*(A)j^*(E^2)=\pi^*A\zeta^2$, and pushing down to $S$ gives $$A\pi_*(\zeta^2)=-A c_1(N_{S/X})=- A(3A+K_S)=-30, $$
     where the second equality is applying Lemma \ref{lem: chern on X}.

    Finally, $E^4=j_*(j^*E^3)$. Again, since $j^*E=-\zeta$, we have 
    $j^*(E^3)= -\zeta^3$. Pushing down to $S$ gives $-\pi_*(\zeta^3)=c_2(N_{S/X})-c_1(N_{S/X})^2= 48-90=-42, $ again using Lemma \ref{lem: chern on X}.
\end{proof}

Now we let $H':= 5H-2E$, and $E'=12H-5E$.
We record some observations:
\begin{lemma}\label{lem: H' and E' numerics}
    We have:
    $$(H')^4=3,\qquad 3H'-E'=3H-E=-K_Z, $$
    and $H+H'=-2K_Z$.
\end{lemma}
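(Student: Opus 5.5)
We verify the three identities directly from the intersection numbers of Lemma~\ref{lem: int no. Z}, together with the standard formula for the canonical class of a blowup.

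\textbf{The canonical class.} Since $X\subset\bP^5$ is a cubic fourfold, adjunction gives $K_X=\calO_X(-6+3)=-3h_X$. Since $S\subset X$ is a smooth subvariety of codimension $2$, the blowup formula gives
\[
K_Z=p^*K_X+(\operatorname{codim}S-1)E=-3H+E,
\]
so $-K_Z=3H-E$.

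\textbf{The linear identities.} These reduce to arithmetic in $\Pic(Z)=\bZ[H]\oplus\bZ[E]$. For the first,
\[
3H'-E'=(15H-6E)-(12H-5E)=3H-E=-K_Z.
\]
For the second,
\[
H+H'=6H-2E=2(3H-E)=-2K_Z.
\]

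\textbf{The degree $(H')^4$.} Expand $(5H-2E)^4$ binomially and substitute $H^4=3$, $H^3E=0$, $H^2E^2=-10$, $HE^3=-30$, $E^4=-42$:
\[
(H')^4=625\cdot 3-4\cdot125\cdot 2\cdot 0+6\cdot 25\cdot 4\cdot(-10)-4\cdot 5\cdot 8\cdot(-30)+16\cdot(-42).
\]
The terms are $1875$, $0$, $-6000$, $+4800$ and $-672$, which sum to $3$. Hence $(H')^4=3$.

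The only step needing care is the sign bookkeeping in this expansion, in particular the odd powers of $-2E$. Everything else is immediate from the canonical bundle formula.
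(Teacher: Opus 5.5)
Your proposal is correct and follows the paper's approach: the paper just says the identities are immediate from the intersection numbers on $Z$, and you carried out that computation, including the blowup formula $K_Z=-3H+E$, which the paper uses implicitly. Your expansion is right, since $1875+0-6000+4800-672=3$, and the linear identities $3H'-E'=3H-E$ and $H+H'=2(3H-E)$ follow directly in $\Pic(Z)$.
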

\begin{proof}
    Immediate from the above computation.
\end{proof}

\subsection{Constructing the partner cubic}\label{subsec: construction}

The numerics  in Lemma~\ref{lem: H' and E' numerics} suggest that $H'$ and $E'$ might be the pullback of the hyperplane class and the exceptional divisor under some other blowup $q:Z\to Y$, where $Y$ is a cubic fourfold. In this subsection, we verify that this is indeed the case, showing moreover that $Y$ contains a Fano model of an Enriques surface $T$ and that $q$ is the blowup of $Y$ along $T$.

\begin{proposition}\label{prop: image is a cubic}
    For a very general $X\in\calC_{44}$, the linear system $|H'|=|5H-2E|$ on $Z:=\Bl_SX$ is basepoint-free and induces a morphism 
    $q:Z\rightarrow \bP^5$ whose image $Y$ is a smooth cubic fourfold birational to $X$.
\end{proposition}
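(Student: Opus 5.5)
The plan is to deduce everything from the ambient linear system on $\bP:=\Bl_S\bP^5$ and then use the numerics of Lemma~\ref{lem: int no. Z} and Lemma~\ref{lem: H' and E' numerics}.

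\emph{Step 1: base-point freeness.} Since $X$ is smooth and contains $S$, the strict transform of $X$ in $\bP$ is $Z=\Bl_SX$. The class $5H-2E$ on $\bP$ restricts to $5H-2E$ on $Z$, so $\mathrm{Base}|H'|\cap Z$ lies in $\mathrm{Base}|5H-2E|_{\bP}\cap Z$. By Proposition~\ref{prop: base locus} and Lemma~\ref{lem: general X contains general S}, the ambient base locus is the disjoint union of the twenty planes $\widetilde\Pi_{\pm i}$. Each plane $\Pi_{\pm i}$ is swept out by trisecant lines to $S$. A trisecant line lying on a cubic that contains $S$ would meet $X$ in at least three points plus the line, so I first check whether $\Pi_{\pm i}\subset X$. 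This should fail for very general $X$: $\Pi_i\cap X$ is a plane cubic containing the cubic curve $F_i$, and the cubics through $S$ cut out $S$ scheme-theoretically. So for $X$ general in the $\bP^9$ of cubics through $S$, we get $\Pi_i\cap X=F_i$. Then $\widetilde\Pi_i\cap Z$ is the strict transform of $F_i$ inside $\widetilde\Pi_i\cap E$, a curve. A base curve $C$ of $|H'|$ would satisfy $H'\cdot C\ge 0$, so we must show the restricted system $|H'|_Z$ has no base points there. For this I use that restriction $H^0(\bP,5H-2E)\to H^0(Z,5H-2E)$ is not surjective in general. Via the sequence
\[
0\to\calO_\bP(2H-E)\to\calO_\bP(5H-2E)\to\calO_Z(5H-2E)\to 0,
\]
together with $h^0(\calI_S(2))=0$ and the vanishing $h^1(\calI_S(2))=0$ (which follows from the resolution (\ref{eqn: res of I})), the restriction is injective. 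The quotient is controlled by $H^1(\bP,2H-E)=H^1(\calI_S(2))=0$, so the restriction is in fact surjective, and $h^0(Z,H')=6$. So I cannot gain sections and must argue directly that the curves $\widetilde F_{\pm i}\subset E$ are not in the base locus. I expect this to be the main obstacle. I would try first to compute $H'\cdot\widetilde F_i$ and to show that the ambient quintics separate points in the normal directions to $\widetilde\Pi_i$ that are tangent to $Z$. Failing that, I would fall back on the MAGMA verification of one example, which suffices because base-point freeness is an open condition in the family $I$ of Lemma~\ref{lem: general X contains general S}.

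\emph{Step 2: degree of the image.} With $|H'|$ base-point free, $q\colon Z\to\bP^5$ is a morphism, and $(H')^4=3$ by Lemma~\ref{lem: H' and E' numerics}. Hence $\deg q\cdot\deg q(Z)=3$. If $q(Z)$ were lower dimensional, then $(H')^4=0$, a contradiction. Since $h^0(H')=6$ and the image is nondegenerate (the sections are linearly independent), the image is not a hyperplane. So $\deg q(Z)=3$ and $q$ is birational onto a cubic fourfold $Y$. Composing with $p^{-1}$ gives the birational map $X\dashrightarrow Y$.

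\emph{Step 3: smoothness of $Y$.} Let $E'=12H-5E$. I would show that $q$ contracts exactly $E'$, so that $q$ is an extremal contraction of the Picard rank $2$ Fano variety $Z$, whose anticanonical class is $-K_Z=3H-E=3H'-E'$. Concretely, I compute $H'^3E'=0$ and $H'^2E'^2=-10$, which indicates that $E'$ maps to a surface of degree $10$. The other ray of $\overline{NE}(Z)$ is $H'$-trivial, so $q$ is the contraction of that ray. Its exceptional locus is the divisor $E'$, whose fibres are lines $\ell$ with $-K_Z\cdot\ell=1$. By Andreatta--Wi\'sniewski/Ando's classification of divisorial contractions with one-dimensional fibres of length $1$ on smooth varieties, $Y$ is smooth and $q$ is the blowup of a smooth surface $T$. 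Hence $Y$ is a smooth cubic.
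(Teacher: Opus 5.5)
\textbf{Step 1.} Your Step 2 agrees with the paper. Your check that $H^0(\bP,5H-2E)\to H^0(Z,H')$ is an isomorphism (because $\calI_S(2)$ has no cohomology) is correct, and it usefully justifies $\dim|H'|=5$. But Step 1 rests on a false claim: $\widetilde\Pi_i\cap Z$ is not a curve, it is empty. Once $\Pi_i\not\subset X$, the restriction of the cubic equation to $\Pi_i$ is a multiple of the equation of $F_i$, so $X\cap\Pi_i=F_i=\Pi_i\cap S$ scheme-theoretically. Hence $\widetilde\Pi_i\cong\Pi_i$ with $E|_{\widetilde\Pi_i}=F_i\sim 3l$. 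Since $Z\sim 3H-E$ on $\bP$, you get $\calO_\bP(Z)|_{\widetilde\Pi_i}\cong\calO_{\bP^2}(3-3)=\calO_{\bP^2}$. As $\widetilde\Pi_i\not\subset Z$, the intersection $Z\cap\widetilde\Pi_i$ is an effective Cartier divisor of degree $0$ on $\bP^2$, hence empty. Equivalently, at $s\in F_i$ you have $T_s\Pi_i\cap T_sX=T_sF_i$, so the normal direction of $\Pi_i$ does not lie on the line $Z\cap\pi^{-1}(s)=\bP(N_{S/X,s})$. Thus $Z$ misses the base locus of Proposition~\ref{prop: base locus} entirely, and base-point freeness is immediate; this is the paper's argument. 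As written, you leave this ``main obstacle'' open, with only a vague separation-of-directions plan or a computer fallback that is not carried out. So base-point freeness is not established.

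\textbf{Step 3.} This step has a more serious gap. You assert that $q$ is divisorial with exceptional divisor $E'$ and that all its fibres are lines with $-K_Z\cdot\ell=1$. That is exactly the input Ando's theorem needs, and nothing you compute ($H'^3E'=0$, $H'^2E'^2=-10$) supports it. It does not rule out a small contraction, or isolated two-dimensional fibres such as a $\bP^2$ or a quadric on which $-K_Z$ restricts to the hyperplane class. With such fibres Ando's theorem does not apply, and smoothness of $Y$ does not follow.

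The paper does not attempt this at this stage. It obtains smoothness of $Y$ from one explicit smooth example (computed over $\bF_{101}$ and lifted to characteristic zero, Appendix~\ref{app: smoothness}) together with openness of smoothness. Only afterwards, using that $Y$ is smooth, does it show $q$ is a blowup. Even there, excluding two-dimensional fibres needs a classification of such fibres for good contractions from the literature, plus the lattice computation of Lemma~\ref{lem: no surface fibres}, which uses $\rank A(X)=2$.

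Your conceptual route could plausibly be completed along those lines. The lattice argument only uses $-K_Z=3H'-E'$ and that $H'|_F$ is trivial, and $Z$ is Fano because $-K_Z=\tfrac12(H+H')$ with $H,H'$ nef and non-proportional. You would still have to address normality: Ando's theorem gives smoothness of the target of the extremal contraction, which is the normalization of $Y$, not $Y$ itself.
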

\begin{proof}
    First, note that the linear system $|H'|$ is the restriction of the linear system $|L|$ on $\bP^5$ from Proposition~\ref{prop: base locus}; that is, its sections are given by quintics that are double along $S$.
    
    A very general cubic fourfold $X\in \calC_{44}$ does not contain any planes.
    In particular, $X$ contains none of the planes $\Pi_i$ for $i=\pm1,\dots,\pm10$ swept out by trisecant lines to $S$.
    Hence each of those planes intersects $X$ in the smooth cubic curve $F_i=\Pi_i\cap S$.
    Since this intersection is confined to $S$, the strict transform $\widetilde{\Pi}_i\subset\bP$ is disjoint from $Z$. Then $Z$ does not intersect the base locus of $|L|$ by Proposition~\ref{prop: base locus}, so the linear system $|H'|$ on $Z$ is basepoint-free.

    Since $\dim |H'|=5$, the linear system determines a morphism $q:Z\to\bP^5$ whose image, which we call $Y$, is nondegenerate. Further, from Lemma \ref{lem: int no. Z}, we know that $(H')^4=3\neq0$, so $Y$ has dimension $4$. Moreover, $(H')^4=\deg(q)\deg(Y)$, so either $\deg(Y)=1$ or $\deg(Y)=3$; the first case cannot happen since $Y$ is nondegenerate. Hence $q:Z\rightarrow Y$ is a birational morphism onto a cubic fourfold.

    For the smoothness of $Y$, it suffices to demonstrate one example where $Y$ is smooth. We provide such an example using the accompanying \verb|MAGMA| code, explained in Appendix \ref{app: smoothness} and \ref{app: lift}.
\end{proof}
 Thus we have a diagram:
 $$\begin{tikzcd}
     &Z\ar[dl, "p" above] \ar[dr, "q"]&\\
     X\ar[rr, dashed]&& Y
 \end{tikzcd}. $$
where both $p$ and $q$ are birational maps. Next, we will argue that $q$ is also the blow up in a Fano-embedded Enriques surface.

\begin{proposition}\label{prop: Y in C44}
    For a very general $X\in\calC_{44}$, the morphism $q:Z\rightarrow Y$ is the blow up of $Y$ along a smooth Enriques surface $T\subset Y$, where $T\subset \bP^5$ is the Fano embedding. In particular, $Y\in \calC_{44}$.
\end{proposition}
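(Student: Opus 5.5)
We plan to identify $q$ as a divisorial contraction contracting $E'=12H-5E$ onto a smooth surface $T$, and then show $T$ is a Fano-embedded Enriques surface.

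\textbf{Step 1: exceptional locus and effectivity of $E'$.} Since $Z$ has Picard rank $2$ (Lemma~\ref{lem: int no. Z}) and $q$ is a birational morphism given by $H'$, the class $H'$ spans the second ray of $\Nef(Z)$; the first is spanned by $H$. Using Lemma~\ref{lem: int no. Z} we check
\[
(H')^3E'=0,\qquad (H')^2(E')^2=-10,\qquad H'(E')^3=-30,\qquad (E')^4=-42,
\]
which are the same numbers as for $(H,E)$. This is forced by the symmetry $H+H'=-2K_Z$ and $3H'-E'=-K_Z$ of Lemma~\ref{lem: H' and E' numerics}. To see that $E'$ is effective, we show that its image in $X$ is the cubic-like hypersurface section $|12H-5E|$, i.e. a degree-$12$ hypersurface with multiplicity $5$ along $S$. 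This should come from the Jacobian or determinantal description of the quintics: the Jacobian of the map defined by the six quintics is a degree $6\cdot 4=24$ form on $\bP^5$, vanishing to order $\ge 2\cdot 5$ along $S$, and its restriction to $X$ gives $2E'$-type data. Alternatively one can argue directly. The exceptional divisor $\mathrm{Exc}(q)$ is nonempty, since $X\not\cong Y$ will follow and $Z\to Y$ is not an isomorphism because $\rho(Z)=2>1=\rho(Y)$ (here $Y$ is a smooth cubic fourfold). Because $Y$ is smooth, hence $\bQ$-factorial, the exceptional locus of $q$ is a prime divisor $D$ with $K_Z=q^*K_Y+aD$. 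Writing $K_Z=-3H+E=-3H'+E'$, with $q^*K_Y=-3H'$, gives $aD=E'$. Since $D\cdot(H')^3=0$ and $E'$ is primitive in $\bZ H\oplus\bZ E$, we get $D=E'$ and $a=1$.

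\textbf{Step 2: identifying $q$ as a blow-up.} Discrepancy $1$ at a smooth point suggests a blow-up of a codimension-$2$ centre. We apply the structure theory for divisorial contractions of smooth fourfolds with smooth target, due to Ando and Andreatta--Wiśniewski. Let $T=q(E')$. The numbers $(H')^2(E')^2=-10\ne0$ show that $\dim T=2$, since for a curve or point centre this product would vanish. The general fibre of $E'\to T$ is then a curve $\ell$ with $K_Z\cdot\ell=-1$ and $E'\cdot\ell=-1$. By Andreatta--Wiśniewski, a divisorial contraction with $2$-dimensional image, all fibres of dimension $\le1$ and $-K_Z\cdot\ell=1$, is the blow-up of a smooth variety along a smooth codimension-$2$ subvariety. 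We must rule out $2$-dimensional fibres. Such a fibre $F$ would be a surface contracted to a point. This is the main obstacle. First, $q$ restricted to $E$ is the map given by $|5A-2\zeta|$-type sections, and using the genericity assumptions of Remark~\ref{rmk: very general}, namely that $A(X)$ has rank $2$ and $X$ contains no planes, we exclude surfaces $F\subset Z$ with $H'|_F\equiv0$: the class $p_*F\in A(X)=\langle \eta_X,S\rangle$ must satisfy $p_*F\cdot$(image data) constraints that yield a contradiction via the lattice. As a backup, smoothness of $T$ and the blow-up structure can be checked on the single \verb|MAGMA| example and spread by openness.

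\textbf{Step 3: identifying $T$.} Once $q=\Bl_TY$, we read off invariants via the formulas of Lemma~\ref{lem: int no. Z} in reverse. We have $\deg T=-(H')^2(E')^2=10$, and $A_T\cdot c_1(N_{T/Y})=30$, and $c_1^2-c_2=42$. Moreover $E'\cong\bP(N_{T/Y})$ is birational to $E\cong\bP(N_{S/X})$ via the symmetric roles of the two blow-ups. Better, $T$ is birational to a ruled-surface base, and we compute $\chi(\calO_T)$ and $K_T$ from $K_{E'}=(K_Z+E')|_{E'}$. Since $E$ and $E'$ are both prime divisors of $Z$, we compare $E\cap E'$ to deduce $T$ has $q=p_g=0$ and $K_T^2=0$, with $K_T$ numerically trivial (using $c_1(N_{T/Y})\equiv 3A_T$, obtained from $H'(E')^3=-30$ together with adjunction), and $\chi(\calO_T)=1$ via $c_2$ and Noether. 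So $T$ is an Enriques surface of degree $10$ in $\bP^5$, and it is linearly normal because $h^0(\calO_Y(1))=6$ and $T$ is nondegenerate. Nondegeneracy holds because $H'-E'$ is not effective. Hence $T$ is a Fano model; the condition $A\cdot C\ge3$ follows from the absence of lines, conics and plane cubics meeting badly, or from unnodality inherited generically. Finally $[T]^2=48$ in $A(Y)$ gives discriminant $44$, so $Y\in\calC_{44}$.
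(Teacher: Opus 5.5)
Your architecture matches the paper's: identify $E'$ as the unique exceptional divisor, exclude two-dimensional fibres, apply Ando/Andreatta--Wi\'sniewski, then identify $T$ numerically. Your ``direct'' argument in Step~1 is fine: with $Y$ smooth and $\rho(Z)-\rho(Y)=1$ there is a single exceptional prime divisor $D$, and $aD=K_Z+3H'=12H-5E$ forces $D=E'$ and $a=1$. The Jacobian heuristic can be discarded.

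The first genuine gap is in Step~2. Let $F$ be a surface in a fibre, $L=-K_Z|_F=-E'|_F$ and $k=L^2$. Then $H'|_F=0$ gives $H|_F=2L$ and $E|_F=5L$. The blow-up formulas give $\eta_X\cdot p_*[F]=4k$ and $[S]\cdot p_*[F]=5k$, i.e.\ $p_*[F]=\tfrac{71k}{22}\eta_X-\tfrac{25k}{44}[S]$. Integrality in $A(X)=\bZ\eta_X\oplus\bZ[S]$ only forces $44\mid k$, so ``the lattice'' alone gives no contradiction. What is missing is an a priori bound on $k$. The paper first shows $-K_Z$ is $q$-ample, so $q$ is a good contraction. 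It then uses the classification of two-dimensional fibre components of such contractions of fourfolds: $(F,-K_Z|_F)\cong(\bP^2,\calO(1))$ or a (possibly singular) quadric with $\calO(1)$. This gives $k\in\{1,2\}$, and the lattice computation then yields the contradiction. Your MAGMA fallback would require certifying the absence of such fibres on the example plus a semicontinuity argument, and neither is supplied.

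Step~3 also does not go through as written. Nothing makes $E'$ birational to $E$; a posteriori this would force $S$ and $T$ to be birational, hence isomorphic, which is not known. ``Comparing $E\cap E'$'' does not yield $q(T)$, $p_g(T)$ or $K_T^2$. Likewise, $H'(E')^3=-30$ plus adjunction gives only $A_T\cdot K_T=0$, not $c_1(N_{T/Y})\equiv 3A_T$. Numerical triviality of $K_T$ needs $K_T^2=0$ first, and then the Hodge index theorem.

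The paper gets the missing invariants from the two blow-up decompositions $H^k(Z,\bZ)_{\mathrm{tf}}\cong H^k(X,\bZ)\oplus H^{k-2}(S,\bZ)_{\mathrm{tf}}(-1)\cong H^k(Y,\bZ)\oplus H^{k-2}(T,\bZ)_{\mathrm{tf}}(-1)$. Since $X$ and $Y$ are smooth cubic fourfolds with the same Hodge numbers, $T$ has the Hodge numbers of $S$. Hence $q(T)=p_g(T)=0$, $\chi(\calO_T)=1$ and $\chi_{\mathrm{top}}(T)=12$. Noether's formula then gives $K_T^2=0$, the Hodge index theorem gives $K_T\equiv 0$, and $T$ is an Enriques surface.
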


The proof amounts to a sequence of technical lemmas. In Lemma~\ref{lem: q is a blowup}, we use structural results about contractions of fourfolds to prove that $q$ is the blowup of $Y$ along a smooth surface $T$, relying on Lemma~\ref{lem: no surface fibres} to rule out the existence of certain two-dimensional fibres. We then use numerics to identify the surface $T$ as an Enriques surface with its Fano embedding in Lemma~\ref{lem: T is Enriques}.

\begin{lemma}\label{lem: q is a blowup}
    The exceptional divisor $E'$ of $q:Z\to Y$ is irreducible, and $T:=q(E')$ is an irreducible surface.
\end{lemma}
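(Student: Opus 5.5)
Here $q:Z\to Y$ is a birational morphism from the smooth projective fourfold $Z$ with $\rho(Z)=2$ (Lemma~\ref{lem: int no. Z}) onto the smooth cubic $Y$, with $\rho(Y)=1$. The plan has three steps: show that $q$ contracts exactly one prime divisor, identify its class with $E'=12H-5E$, and show its image is a surface.

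\textbf{Step 1: one exceptional prime divisor.} Since $Y$ is smooth, hence $\bQ$-factorial, the exceptional locus of the birational morphism $q$ is of pure codimension one. Every prime exceptional divisor has class in $\ker(q_*)\subset \Pic(Z)\otimes\bQ$. This kernel has rank $\rho(Z)-\rho(Y)=1$, and it is generated by the exceptional divisors. Let $D_1,\dots,D_k$ be the prime exceptional divisors. By the negativity lemma, a nonzero effective combination of them cannot be $q$-nef, so the $D_i$ are linearly independent in $N^1(Z)$. Hence $k=1$, and there is a unique prime exceptional divisor $D$.

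\textbf{Step 2: the class of $D$ is $E'$.} First I locate $D$ in $\Pic(Z)=\bZ H\oplus\bZ E$. From the two facts
\begin{itemize}
\item $K_Z=q^*K_Y+aD$ with $a>0$, and $K_Y=-3H'$ since $q^*\calO_Y(1)=H'$,
\item $-K_Z=3H-E=3H'-E'$ (Lemma~\ref{lem: H' and E' numerics}),
\end{itemize}
we get $aD=E'$. Since $E'=12H-5E$ is primitive, $D=E'/a$ forces $a=1$. Thus $E'=D$ is irreducible and reduced, and it is the exceptional divisor.

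As a check on Step 1, $(H')^3E'=(5H-2E)^3(12H-5E)=0$ by Lemma~\ref{lem: int no. Z}. This confirms that $E'$ spans the kernel of $q_*$ numerically, i.e.\ $H'^3\cdot E'=0$ means $q(E')$ has dimension at most $2$.

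\textbf{Step 3: $T=q(E')$ is a surface.} The image $T$ is irreducible because $E'$ is. Its dimension is at most $2$ since $(H')^3E'=0$. To show it equals $2$, I compute $(H')^2(E')^2$ with Lemma~\ref{lem: int no. Z}. If $\dim T\le 1$, then $(H')^2$ restricted to $E'$ would be represented by a combination of fibres over finitely many points, giving $(H')^2E'\equiv 0$ numerically on $E'$ and hence $(H')^2(E')^2=0$. I expect the computation to give $(H')^2(E')^2=-10\neq0$, by symmetry with $H^2E^2$, which rules this case out. So $T$ is an irreducible surface.

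The main obstacle is Step 1, since the rest is numerics. It requires that the exceptional locus of $q$ be purely divisorial, which holds because $Y$ is smooth and hence $\bQ$-factorial, and that the negativity lemma gives linear independence of the exceptional prime divisors.
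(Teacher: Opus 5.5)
Your proof is correct, and Steps 2 and 3 follow the paper's argument closely. The paper also writes $K_{Z/Y}=mE'$, computes $K_Z-q^*K_Y=(-3H+E)+3H'=12H-5E$, and uses primitivity to get $m=1$. It then rules out $\dim T\le 1$ by noting that $(H')^2(E')^2=h_Y^2\cdot q_*((E')^2)$ would vanish, whereas it equals $-10$.

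The real difference is in Step 1, where you show there is only one exceptional prime divisor. You argue by counting: over the smooth target the exceptional locus is purely divisorial, exceptional prime divisors are linearly independent, and $\rho(Z)-\rho(Y)=1$. The paper argues through the Mori cone instead:
\begin{itemize}
\item since $\rho(Z/Y)=1$, the contracted curves span a single ray;
\item the negativity lemma shows $K_{Z/Y}$ is negative on that ray, so $-K_Z$ is $q$-ample;
\item a prime divisor negative on the ray contains every contracted curve, hence is the whole exceptional locus.
\end{itemize}
Your route is more elementary and avoids MMP language. The paper's route also proves that $-K_Z$ is $q$-ample, and the next lemma relies on this to treat $q$ as a good contraction and exclude two-dimensional fibres. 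If you use your Step 1, record this fact separately. It follows quickly: $-K_Z\cdot C=-E'\cdot C$ for contracted curves $C$, and $E'\cdot C<0$ for all of them, because $E'$ is not $q$-nef and all contracted curves lie on one ray.

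Two small repairs are needed.

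First, ``a nonzero effective combination cannot be $q$-nef'' does not by itself give linear independence. A numerically trivial combination $B=\sum a_iD_i$ need not be effective. You can fix this in either of two ways:
\begin{itemize}
\item apply the negativity lemma to both $B$ and $-B$, which are both $q$-nef with $q_*(\pm B)=0$; or
\item argue directly: a rational function whose divisor is supported on exceptional divisors is a unit on $Y\setminus q(\mathrm{Exc})$. That complement has codimension at least $2$ in the normal projective variety $Y$, so the function extends to a unit on $Y$ and is therefore constant.
\end{itemize}

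Second, you should compute $(H')^2(E')^2$ rather than appeal to ``symmetry'' with $H^2E^2$. Using $H'E'=60H^2-49HE+10E^2$ and Lemma~\ref{lem: int no. Z}, one gets $3600\cdot 3+3601\cdot(-10)-980\cdot(-30)+100\cdot(-42)=-10$, which confirms your guess.
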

\begin{proof}
    Since $Y$ is normal and $q$ is birational, the fibres of $q$ are connected. 
    By the Lefschetz hyperplane theorem, the Picard ranks of $X$ and $Y$ are $1$, and the Picard rank of $Z$ is $2$ since it is the blowup of $X$ along an irreducible surface. Hence $q$ must consist of at least one divisorial contraction, and the numerical classes of the contracted curves span a single ray in the Mori cone of $Z$. Since $Y$ is smooth, a nontrivial projective birational morphism onto $Y$ cannot be small.
    Thus the relative canonical divisor is an effective exceptional divisor: 
    \[
    K_{Z/Y}=K_Z-q^*K_Y=\sum a_i E_i
    \]
    with $a_i>0$. Note that $K_{Z/Y}$ cannot be $q$-nef; indeed this follows from applying Lemma \cite[Lemma 3.39]{KolMori} to $B:=K_{Z/Y}$. Thus if $C$ is a curve contracted by $q$, we have 
    $$0>C\cdot K_{Z/Y}=C\cdot K_Z. $$ Thus $K_Z$ is negative on the entire contraction ray. In fact, $-K_Z$ is $q$-ample, and so $q:Z\rightarrow Y$ is an elementary $K_Z$-negative contraction.
    Then there exists a prime exceptional divisor $E'$ that is negative on the ray, and any curve $C$ contracted by $q$ is contained in $E'$. In particular, $K_{Z/Y}=mE'$

    Since $H'=5H-2E$, we have
    \[
    mE'=K_{Z/Y}=(-3H+E)+3H'=12H-5E.
    \]
    Since $12H-5E$ is primitive, $m=1$,
    and using Lemma~\ref{lem: int no. Z}, we compute $(H')^2(E')^2=-10$. It follows that $T:=q(E')$ is an irreducible surface: indeed, if $q(E')$ had dimension at most 1, then $q_*(E')^2=0$, contradicting the intersection number via the projection formula. 
\end{proof}

\begin{lemma}\label{lem: no surface fibres}
    There is no irreducible rational surface $F$ contained in a fibre of $q:Z\to Y$ with $(-K_Z|_F)^2\in\{1,2\}$.
\end{lemma}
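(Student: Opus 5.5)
Any surface $F$ contracted by $q$ has $H'|_F\equiv 0$, so $5H|_F\equiv 2E|_F$. Hence $-K_Z|_F=(3H-E)|_F\equiv \tfrac12 H|_F$, and $(-K_Z|_F)^2=\tfrac14 (H|_F)^2$. The plan is to exploit this numerically, together with the geometry of $p:Z\to X$.

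\textbf{Step 1: where $F$ lives and what $p(F)$ is.} Write $d:=(H|_F)^2$, so that the hypothesis $(-K_Z|_F)^2\in\{1,2\}$ becomes $d\in\{4,8\}$. First I will check that $F$ is not contracted by $p$. The fibres of $p$ are at most lines in the $\bP^1$-bundle $E\to S$, so $H|_F$ is nef and big on $F$; this holds as long as $F$ is not contained in $E$ with one-dimensional image, and in that case $F$ is a ruled surface over a curve in $S$. So $p(F)=:G\subset X$ is a surface of degree $d/\deg(F\to G)$ in $\bP^5$, hence of degree dividing $4$ or $8$.

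\textbf{Step 2: $G$ is a rational surface of small degree.} Since $F$ is contracted by $|5H-2E|$, every quintic in $\bP^5$ double along $S$ restricts on $G$ to a constant section modulo the multiplicity condition. Equivalently, $2E|_F\equiv 5H|_F$ says that $G$ meets $S$ very heavily: the scheme $G\cap S$ pulled back to $F$ is a divisor of class $\tfrac52 H|_F$. Intersecting with $H|_F$ gives $(E\cdot H\cdot F)=\tfrac52 d\in\{10,20\}$. I will try to exploit the integrality of $(E|_F)^2=\tfrac{25}{4}d$ together with adjunction on $F$, and the classification of rational surfaces of degree $\le 8$ spanning small linear spaces, to show the following: $G$ must lie in a linear span where the restriction map $H^0(\calI_S^2(5))\to H^0(\calO_G(5))$ factors through very few sections. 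In particular, the image of $G$ under the rational map of Proposition~\ref{prop: base locus} would be a point, so $G$ lies in a fibre of the Cremona map on $\bP^5$.

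\textbf{Step 3: rule out such fibres.} A surface $G\subset\bP^5$ contracted to a point by $|L|$ yields a $5$-dimensional space of quintics, double along $S$, that are all proportional on $G$. I will try first to argue that $G$ would have to lie in the base locus modulo one relation. Concretely, the rank-$6$ space would restrict to a rank-$1$ space on $G$, so $G$ lies in the base locus of a $4$-dimensional subsystem. I expect this to force $G$ into a plane $\Pi_{\pm i}$, or into a Veronese/quartic scroll swept by secant lines of $S$.

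Planes are excluded because a very general $X\in\calC_{44}$ contains no planes, and quartic scrolls or Veronese surfaces would give extra classes in $A(X)$ of rank $2$. In the latter case I will check directly that no class $a\eta_X+bS$ has degree $4$ (or $8$ with the right self-intersection) compatible with a rational surface. Using the discriminant form (Lemma~\ref{lem: discriminant}), I expect $3a+10b=4$ and the self-intersection constraint coming from $c_2$ of a rational surface to have no integral solutions.

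\textbf{Main obstacle.} Step 2/3, namely turning "contracted by $|5H-2E|$" into a concrete geometric constraint on $G$, is the hardest part. If the geometric route stalls, the fallback is purely lattice-theoretic. I would compute $[G]=a\eta_X+bS$ and use the double-point formula for a surface in $X$: $G^2$ is determined by $d$, $K_G^2$, and $c_2(G)$, which are bounded because $G$ is rational with $d\le 8$. I would then show that no integer solution exists in the rank-$2$ lattice $A(X)$.
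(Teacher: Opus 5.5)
\textbf{There is a genuine gap.} Your opening reduction is correct. $H'|_F\equiv 0$ gives $E|_F\equiv\tfrac52 H|_F$ and $-K_Z|_F\equiv\tfrac12 H|_F$. So with $k=(-K_Z|_F)^2$ you get $H^2\cdot F=4k$, $H\cdot E\cdot F=10k$ and $E^2\cdot F=25k$.

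Steps 2--3, however, are not an argument. Saying that $H^0(\calI_S^2(5))$ restricts to a rank-one system on $G=p(F)$ only restates the hypothesis that $F$ is contracted. Nothing you write explains why this would force $G$ to be a plane, a Veronese surface or a quartic scroll.

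The lattice fallback also has a real hole. The degree condition $3a+10b=\deg$ alone has integral solutions, e.g.\ $(a,b)=(-2,1)$ for degree $4$, so you need a second condition. The one you propose is to compute $[G]^2$ by the double-point formula from $d$, $h\cdot K_G$, $K_G^2$ and $c_2(G)$. That formula is valid only for a smooth surface in $X$. Here $G=p(F)$ may be singular, and $F\to G$ need not be birational, so $p_*[F]$ may be a multiple of $[G]$. Rationality and $\deg\le 8$ do not by themselves give you $h\cdot K_G$ and $K_G^2$. You would need a classification of all such surfaces, including singular ones where correction terms enter. As written, the case analysis cannot be closed.

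\textbf{The missing idea.} You already hold a second \emph{linear} constraint, coming from $E^2\cdot F$, once you use the blow-up decomposition of $H^4(Z,\bZ)_{\mathrm{tf}}$. Write $[F]=p^*r+j_*\pi^*\alpha$ with $r=p_*[F]\in A(X)$ and $\alpha\in H^2(S,\bZ)_{\mathrm{tf}}$. Use $j^*E=-\zeta$, $\pi_*\zeta=1$ and $\pi_*\zeta^2=-c_1(N_{S/X})$. The projection formula then gives
\[
H^2\cdot F=\eta_X\cdot r,\qquad H\cdot E\cdot F=-A\cdot\alpha,\qquad E^2\cdot F=-[S]\cdot r-3A\cdot\alpha ,
\]
where the last uses $c_1(N_{S/X})=3A+K_S$ with $K_S$ torsion. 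Hence $\eta_X\cdot r=4k$, $A\cdot\alpha=-10k$, and so $[S]\cdot r=5k$.

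Writing $r=a\eta_X+b[S]$ gives $3a+10b=4k$ and $10a+48b=5k$. The coefficient matrix is the Gram matrix of $A(X)$, of determinant $44$, and the unique solution $a=71k/22$, $b=-25k/44$ is non-integral for $k\in\{1,2\}$. This is the paper's argument. It needs no smoothness of $G$ and no birationality of $F\to G$. It does not even use rationality of $F$, only $44\nmid k$.
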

\begin{proof}
    Suppose to the contrary that $F$ is a rational surface embedding in a fibre of $q$, and $L_F^2=k\in \{1,2\}$ where $L_F=-K_Z|_F$.
    We have $L_F=-K_Z|_F= -(q^*K_Y+E')|_F$. Since $q(F)$ is a point, any line bundle pulled back from $Y$ restricts trivially to $F$, so $E'|_F=-L_F$.
    Further, since $H'=q^*\calO_Y(1)$, we see $H'|_F=\calO_F$.
    Writing $H=5H'-2E'$ and $E=12H'-5E'$, we obtain
    $$H|_F= 2L_F, \qquad E|_F=5L_F. $$
    From this, we compute $H^2F=4k$, $HEF=10k$, and $E^2F=25k$.
    In particular, $R:=p(F)$ is a surface of degree $4k$ in $X$.
    
    Consider $[F]\in H^4(Z,\bZ)$. There exists class $r\in H^4(X,\bZ)$, $\alpha\in H^2(S,\bZ)_{\mathrm{tf}}$ such that $[F]=p^*(r) +j_*p^*\alpha$, thus $r=p_*([F])$. The class $r$ is integral and algebraic, and so $r=[R]$. 
    Thus by the projection formula,
    \begin{align*}
        4k&=H^2F= \eta_X R;\\
        10k&=HEF= -A\alpha;\\
        25k&=E^2F= -[S]r-c_1(N_{S/X})\alpha.
    \end{align*}

    Since $c_1(N_{S/X})=3A+K_S$ and $K_S$ is torsion, $c_1(N_{S/X})\alpha= 3A\alpha$.
    Thus $25k=-[S]r+30k$, and so $[S]r=5k$.

    Now $r\in A(X)$, and so we can write as $$r=a\eta_X+b[S]. $$ Using $[S]r=5k$ and $\eta_Xr=4k$, we obtain:
    \begin{align*}
        3a+10b&=4k,\\
        10a+48b&=5k,
    \end{align*}
    and solving gives $a=\frac{71k}{22}, b=-\frac{25k}{44}$. For $k=1$ or 2, neither coefficient is integral, a contradiction.
\end{proof}

\begin{lemma}
    The surface $T$ is smooth, and $q:Z\to Y$ is the blowup along $T$.
\end{lemma}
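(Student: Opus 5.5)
We prove the lemma by invoking the classification of elementary $K$-negative divisorial contractions of smooth fourfolds whose exceptional divisor maps to a surface, due to Andreatta--Wiśniewski (and Takagi). By Lemma~\ref{lem: q is a blowup}, $q:Z\to Y$ is an elementary $K_Z$-negative contraction of the smooth fourfold $Z$. Its exceptional locus is the irreducible prime divisor $E'$, and $E'$ maps onto the irreducible surface $T$. The classification then says the following. If every fibre of $q$ is at most one-dimensional, then $T$ and $Y$ are smooth and $q$ is the blowup of $Y$ along $T$. If there are two-dimensional fibres, there are only finitely many of them, over isolated points of $T$. Each irreducible component $F$ of such a fibre is then a rational surface from a short explicit list: $\bP^2$, a quadric, or a (possibly singular) quadric cone. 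On it, $-K_Z|_F$ restricts to $\calO(1)$ in the first case and to the hyperplane class in the others. Consequently $(-K_Z|_F)^2\in\{1,2\}$ in every case.

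The plan has three steps. First, recall the setup of the classification theorem and check that its hypotheses hold. We need $Z$ smooth, $q$ elementary and $K_Z$-negative, and the exceptional locus a divisor whose image has dimension $2$. The last two points are exactly the content of Lemma~\ref{lem: q is a blowup}, where we showed that $-K_Z$ is $q$-ample. Second, record that the general fibre of $E'\to T$ is a smooth rational curve $C$ with $-K_Z\cdot C=1$. This is the standard fact that the length of the ray equals $\dim$ of the fibre, and the numerics give $E'\cdot C=-1$. Third, suppose there is a fibre of dimension $2$ and let $F$ be one of its irreducible components. By the classification, $F$ is rational with $(-K_Z|_F)^2\in\{1,2\}$. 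Lemma~\ref{lem: no surface fibres} excludes exactly this. Fibres of dimension $3$ cannot occur, since they would lie in $E'$, which maps onto a surface. So all fibres have dimension at most $1$, and the same classification gives that $T$ is smooth and $q=\Bl_TY$.

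The main obstacle is pinning down the precise form of the classification being cited, so that the degree bound $(-K_Z|_F)^2\le 2$ really holds for every possible two-dimensional fibre component, including non-normal or singular quadric surfaces. I would cite Andreatta--Wiśniewski, ``A view on contractions of higher dimensional varieties'', Theorem 4.1.3, for divisor-to-surface contractions of smooth fourfolds. In their list the non-equidimensional fibres are unions of $\bP^2$'s and quadrics with $-K_Z$ restricting to $\calO(1)$, so $(-K_Z|_F)^2$ equals $1$ or $2$ accordingly.

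If that citation is unavailable, I would argue the local structure directly instead. Take a general hyperplane section of $Y$ through a point $t\in T$; it reduces the problem to a threefold contraction onto a curve, where Mori's and Cutkosky's results give smoothness. Alternatively, use Ando's theorem: an elementary contraction with all fibres of dimension at most $1$ and exceptional locus a divisor is a smooth blowup. Either way, the only nontrivial input is excluding two-dimensional fibres, and that is exactly what Lemma~\ref{lem: no surface fibres} supplies.
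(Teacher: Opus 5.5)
Your proposal is correct and follows essentially the paper's route. You use $q$-ampleness of $-K_Z$ to invoke a structure result restricting two-dimensional fibre components to $(\bP^2,\calO(1))$ or a possibly singular quadric with its hyperplane class; the paper cites \cite[Proposition 4.11]{MW} where you cite Andreatta--Wi\'sniewski. You then exclude these via Lemma~\ref{lem: no surface fibres} and conclude with Ando's theorem, exactly as the paper does. Your extra step identifying the general fibre as a line with $-K_Z\cdot C=1$ is harmless but not needed.
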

\begin{proof}
    We first show that $q$ has no two-dimensional fibres. 
    Any two-dimensional fibres are isolated since $E'$ is a threefold and $T=q(E')$ is a surface. Further, by the proof of Lemma \ref{lem: q is a blowup}, $-K_Z$ is $q$-ample, and so it follows that $q:Z\rightarrow Y$ is a `good contraction' \emph{sensu}  \cite{MW}.
    If $(F,L_F)$ is a component of a two-dimensional fibre, and $L_F=-K_Z|_F$, then applying \cite[Proposition 4.11]{MW}, we have $(F,L_F)=(\bP^2, \calO_{\bP^2(1)})$, or $(Q,\calO_Q(1))$, where $Q$ is a (possibly singular) quadric with its hyperplane polarisation. Neither case is possible by Lemma~\ref{lem: no surface fibres}.

    Since $q:Z\rightarrow Y$ has no two dimensional fibres, it follows that $q$ is an extremal divisorial contraction with unique exceptional divisor $E'$. Recall that we already computed $\dim T=2$. The main result of \cite[Theorem 2.3]{Ando} implies that $T$ is smooth, and $Z\cong \Bl_TY$.
\end{proof}

\begin{lemma}\label{lem: T is Enriques}
    $T$ is an Enriques surface with its Fano embedding in $\bP^5$.
\end{lemma}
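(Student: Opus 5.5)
We know $T\subset Y\subset\bP^5$ is a smooth surface and $Z\cong\Bl_TY$ with exceptional divisor $E'=12H-5E$ and hyperplane class $H'=5H-2E$. The strategy is to compute the invariants of $T$ by the same intersection theory as Lemma~\ref{lem: int no. Z}, run in reverse, and then use the classification of surfaces to conclude that $T$ is an Enriques surface. Since $H=5H'-2E'$ and $E=12H'-5E'$, we may write $-K_Z=3H'-E'$, and Lemma~\ref{lem: int no. Z} gives all quartic intersection numbers in $H',E'$. By linearity these are the same as those in $H,E$: $(H')^4=3$, $(H')^3E'=0$, $(H')^2(E')^2=-10$, $H'(E')^3=-30$, $(E')^4=-42$.

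Now apply the blow-up formulas from the proof of Lemma~\ref{lem: int no. Z} to $q\colon Z\to Y$, with $N:=N_{T/Y}$ and $A_T:=H'|_T$. These give
\[
(H')^2(E')^2=-A_T^2,\qquad H'(E')^3=-A_T\cdot c_1(N),\qquad (E')^4=c_2(N)-c_1(N)^2.
\]
Hence $\deg T=A_T^2=10$, $A_T\cdot c_1(N)=30$, and $c_1(N)^2-c_2(N)=42$. The normal bundle sequence $0\to N\to N_{T/\bP^5}\to\calO_T(3A_T)\to0$ and the Euler sequence, as in Lemma~\ref{lem: chern classes}, then express $c_1(N)=3A_T+K_T$ and $c_2(N)$ in terms of $K_T$, $A_T$ and $e(T)$. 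From $A_T\cdot c_1(N)=30$ we get $A_T\cdot K_T=0$. The remaining relation links $K_T^2$ and $e(T)$. To pin both down I would also use $-K_Z=(K_T\text{-twisted})$ data, that is, the formula $K_Z=q^*K_Y+E'$ restricted to $E'$. Alternatively, I would compare topological Euler characteristics: $e(Z)=e(X)+e(S)=27+12$ and $e(Z)=e(Y)+e(T)$ with $e(Y)=27$ because $Y$ is a smooth cubic, so $e(T)=12$. Similarly, $b_2$ and $b_3$ of $Z$ computed both ways give $h^{1,0}(T)=0$ and $b_2(T)=10$. The Hodge numbers also transfer: $H^{2,0}(Z)=H^{2,0}(Y)\oplus H^{0,0}$-shifted $H^{1,1}$ contributions, so $h^{2,0}(T)=h^{2,0}(S)=0$. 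Noether's formula then gives $\chi(\calO_T)=1$ and $K_T^2=0$.

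It remains to show that $K_T$ is numerically trivial and nonzero. Since $A_T\cdot K_T=0$ and $K_T^2=0$, the Hodge index theorem forces $K_T\equiv0$. With $q=p_g=0$, classification shows $T$ is an Enriques surface: its Kodaira dimension is $0$ because $K_T\equiv0$ excludes rational and ruled surfaces, and among minimal surfaces of Kodaira dimension $0$ with $p_g=q=0$ only the Enriques surface remains. Minimality follows from $K_T\equiv 0$, since a $(-1)$-curve $C$ would satisfy $K_T\cdot C=-1$.

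Next, $A_T$ is very ample, being the restriction of $\calO(1)$ under the embedding, and $A_T^2=10$. Linear normality holds: $h^0(A_T)=\chi=1+5=6$ by Riemann--Roch and Kodaira vanishing, and $T$ is nondegenerate. For the Fano condition, an effective isotropic $C$ with $A_T\cdot C\le2$ would be a conic or line in $\bP^5$, whereas a plane cubic or worse is needed for a genus-one curve; an $A_T\cdot C=1,2$ elliptic curve is impossible since such curves are rational. Hence $A_T$ is a Fano polarisation, and $Y\in\calC_{44}$ because $T^2_Y=48$ is computed from $c_2(N)=48$ exactly as for $S$.

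The main obstacle I expect is justifying the cohomological transfer, namely $h^{p,q}(T)$ from $h^{p,q}(Z)$. This rests on the blow-up decomposition $H^*(Z)=H^*(Y)\oplus H^{*-2}(T)$ for both blow-ups together with the equality of Hodge numbers of the two cubics. That argument is routine but must be set up carefully, including the torsion needed to see $K_T\neq0$; the torsion part could come from $H^3(Z,\bZ)_{\mathrm{tors}}\cong H^2(T,\bZ)_{\mathrm{tors}}\cong H^2(S,\bZ)_{\mathrm{tors}}=\bZ/2$.
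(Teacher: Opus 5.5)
Your proposal is correct and is essentially the paper's proof. Like the paper, you read off $q(T)=p_g(T)=0$, $\chi(\calO_T)=1$ and $e(T)=12$ from the two blow-up decompositions of $H^*(Z)$, get $\deg T=10$ and $A_T\cdot K_T=0$ from $(H')^2(E')^2=-10$ and $H'(E')^3=-30$, and conclude via Noether's formula, the Hodge index theorem and the classification of surfaces. Two small corrections: $h^{2,0}(T)$ is detected in $H^{3,1}(Z)\cong H^{3,1}(Y)\oplus H^{2,0}(T)$, not in $H^{2,0}(Z)$; and $K_T\neq0$ is automatic from $p_g=0$, so no torsion argument is needed.

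Your additional checks of linear normality and of the Fano condition $A_T\cdot C\ge 3$ go beyond what the paper actually writes down. They are right in outline, though two steps need a line of justification each. First, you assert that $T$ is nondegenerate without proof. Second, your Fano-condition argument only treats irreducible $C$, so the reducible and non-reduced isotropic cases need to be ruled out.
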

\begin{proof}
    The different blowups give us two decompositions of the torsion-free cohomology of $Z$, including the following:
    \begin{equation}\label{eqn: decomp of H^4(Z)}
    \begin{aligned}
    H^4(Z,\bZ)_{\mathrm{tf}}&\simeq H^4(X,\bZ)\oplus H^2(S,\bZ)_{\mathrm{tf}}(-1)\\
    &\simeq H^4(Y,\bZ)\oplus H^2(T,\bZ)_{\mathrm{tf}}(-1).
    \end{aligned}
    \end{equation}
    and 
    \begin{equation}\label{eqn: decomp of H^3(Z)}
    \begin{aligned}
    H^3(Z,\bZ)_{\mathrm{tf}}&\simeq H^3(X,\bZ)\oplus H^1(S,\bZ)_{\mathrm{tf}}(-1)\\
    &\simeq H^3(Y,\bZ)\oplus H^1(T,\bZ)_{\mathrm{tf}}(-1).
    \end{aligned}
    \end{equation}
    Since $X$ and $Y$ are both smooth cubic fourfolds, they have the same Hodge numbers, so the surfaces $S$ and $T$ also have the same Hodge numbers, yielding the following:
    \[
    q(T)=p_g(T)=0,\qquad \chi(\calO_T)=1,\qquad\chi_{\mathrm{top}}(T)=12.
    \]
    By Noether's formula, $K_T^2=12\chi(\calO_T)-\chi_{\mathrm{top}}(T) =0$.

    Now, let $h_Y$ be the hyperplane class on $Y$ and $A_T=h_Y|_T$. Note $H'=q^*h_Y$. The projection formula gives
    \[
    (H')^2(E')^2= (q^*h_Y)^2(E')^2= h_Y^2\cdot [-T],
    \]
    using the fact that $q_*((E')^2)=-[T]$. On the other hand, $(H')^2(E')^2=-10$ by Lemma~\ref{lem: int no. Z}, so $T$ is a surface of degree 10 in $\bP^5$.

    Note that $(E')^3=j_*j^*(E'^2)=j_*(\zeta'^2)$, where $\zeta'=c_1(\calO_{E'}(1))$.
    Similarly, 
    \[
    -30=H'(E')^3=h_Yq_*((E')^3)=h_Yq_*(j_*(\zeta'^2))= h_Yi'_*\pi'_*(\zeta'^2)= -A_Tc_1(N_{T/Y}),
    \]
    where we use that $\pi'_*(\zeta'^2)=-c_1(N_{T/Y})$, as in Lemma \ref{lem: int no. Z}.  Using adjunction \cite[Chapter II, Proposition 8.20]{hartshorne}, $K_T\cong (K_Y+\det N_{T/Y})|_T=-3A_T+c_1(N_{T/Y})$. 
    Hence intersecting with the ample class $A_T$ gives $A_T\cdot K_T=0$.
    By the Hodge index theorem, $A_T^\perp$ is negative definite since $A_T$ is ample. 
    Thus the fact that $A_T\cdot K_T=0$ and $K_T^2=0$ implies that $K_T\sim_{\mathrm{num}} 0$. Adjunction then shows that $T$ is minimal. Together with $q(T)=p_g(T)=0$, this implies that $T$ is an Enriques surface (applying \cite[Theorem VIII.2]{beauville}). 
\end{proof}

Note that $Y$ has $\rank (A(Y))=\rank(A(X))=2$ and contains the Enriques surface $T$; it follows that $Y\in\calC_{44}$.
This completes the proof of Proposition~\ref{prop: Y in C44}. 

\subsection{Verifying the Fourier--Mukai partnership}\label{subsec: verification}
We summarize the setup obtained in the previous section with the following diagram:
\[
\begin{tikzcd}
     E \ar[rr,hook,"j"] \ar[d,"\pi" left] & &Z\ar[dl, "p" above] \ar[dr, "q"]& & E' \ar[d,"\pi'"] \ar[ll,hook',"j'" above]\\
     S \ar[r,hook,"i"] & X\ar[rr, dashed]&& Y & T \ar[l,hook',"i'" above]
 \end{tikzcd}
 \]
We now show that the cubic fourfold $Y$ constructed in the previous subsection is the unique Fourier--Mukai partner of $X$. We begin by showing that $X$ and $Y$ are not isomorphic.

\begin{proposition}
    Let $X\in \calC_{44}$ be very general and $Y$ constructed as above. Then $Y$ is not isomorphic to $X$.
\end{proposition}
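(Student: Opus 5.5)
We argue via the Hodge-theoretic link between $X$ and $Y$ induced by $Z$. Using the two decompositions \eqref{eqn: decomp of H^4(Z)} of $H^4(Z,\bZ)_{\mathrm{tf}}$, the transcendental lattices satisfy $T(X)\cong T(Z)\cong T(Y)$ as Hodge structures. The reason is that $H^2(S,\bZ)_{\mathrm{tf}}(-1)$ and $H^2(T,\bZ)_{\mathrm{tf}}(-1)$ are purely algebraic, since $p_g=0$. The composite is a Hodge isometry $\psi\colon T(X)\to T(Y)$. Concretely, $\psi$ is the correspondence $q_*p^*$ restricted to transcendental classes, and it preserves the intersection form because $p^*$ and $q^*$ are isometric embeddings on $H^4$.

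Suppose $g\colon X\xrightarrow{\sim}Y$ were an isomorphism. By Torelli, $g$ is induced from a linear map, so $g^*$ is a Hodge isometry $H^4(Y,\bZ)\to H^4(X,\bZ)$ preserving $\eta$. It therefore maps $A(Y)\to A(X)$ and $T(Y)\to T(X)$. The composite $g^*\circ\psi$ is a Hodge isometry of $T(X)$, hence equal to $\pm\id$ by Remark~\ref{rmk: very general}. The plan is to derive a contradiction by examining how $\psi$ acts on discriminant groups.

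The key computation is to describe, inside $H^4(Z,\bZ)$, the lattice $p^*A(X)\oplus j_*\pi^*H^2(S)$ and express $q^*A(Y)$ in it. I would compute $q^*\eta_Y=(H')^2=(5H-2E)^2$ and $q^*[T]=-q^*$-pushforward data via $E'^2=(12H-5E)^2$ in terms of $p^*\eta_X$, $p^*[S]$ and classes $j_*\pi^*\alpha$. The intersection numbers in Lemma~\ref{lem: int no. Z} and the relations in the Chow ring of $E$ suffice for this. Then $T(X)=A(X)^\perp$ and $T(Y)=A(Y)^\perp$ are the same sublattice $N^\perp$ of $H^4(Z)$, where $N$ is the algebraic part. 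Gluing gives two anti-isometries $D(T)\cong D(A(X))\cong\bZ/44$ and $D(T)\cong D(A(Y))$. Their comparison yields an isometry $\phi\colon D(A(X))\to D(A(Y))$ determined by $\psi$.

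If $X\cong Y$ via $g$, then $g^*$ would induce on $D(A)$ the map $\pm\id$ compatible with $\pm\id$ on $T$. Equivalently, $\phi$ composed with $g^*|_{A}$ must be $\pm1$ on $\bZ/44$. Since $g^*$ fixes $\eta$ and $A(X)$ has rank $2$, $g^*|_A$ is the unique $\eta$-preserving isometry, namely the identity, because $S\mapsto S$ is forced by the computation in Proposition~\ref{prop: unique S}. So it remains to show that $\phi$ is multiplication by some unit $u\not\equiv\pm1\pmod{44}$, presumably $u=\pm21$ or $\pm23$, the nontrivial square roots of $1$ mod $44$. This fits the fact that $\calC_{44}$ has exactly one nontrivial Fourier--Mukai partner.

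The main obstacle is the explicit tracking of the generator $\tfrac{3S-10\eta_X}{44}$ of Lemma~\ref{lem: discriminant} through $Z$. This requires writing $q^*\eta_Y$ and $q^*[T]$ in the $X$-side basis including $H^2(S)$ contributions, and finding a transcendental-orthogonal lift. I would first do this by computing the Gram matrix of $N=\langle p^*\eta_X,p^*S,H^2E\text{-type classes}\rangle$ and reading off the glue directly.
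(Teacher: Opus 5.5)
Your outline follows the same route as the paper's proof. It uses the two blow-up decompositions of $H^4(Z,\bZ)_{\mathrm{tf}}$, the identification $p^*T(X)=A(Z)^\perp=q^*T(Y)$, and a comparison of the two gluings on $D(A(Z))\cong\bZ/44\bZ$. It observes that an isomorphism $g$ must send $\eta_Y\mapsto\eta_X$ and $[T]\mapsto[S]$; your appeal to the integer computation in Proposition~\ref{prop: unique S} is exactly right. It ends with the contradiction with $\pm\id$ being the only Hodge isometries of $T(X)$.

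As written, however, it is not a proof. The one nontrivial input is that the comparison map on $\bZ/44\bZ$ is multiplication by a unit $u\not\equiv\pm1\pmod{44}$, and you leave this as ``presumably''. The Fourier--Mukai count cannot supply it. The case $u=\pm1$ is exactly when the isometry $T(X)\cong T(Y)$, up to sign, glues with $\eta\mapsto\eta$, $S\mapsto T$ to a Hodge isometry $H^4(X,\bZ)\cong H^4(Y,\bZ)$. That would mean $Y\cong X$ is the trivial partner, and nothing in your argument rules this out until the computation is done.

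What is missing is an actual formula for $q^*[T]$; ``$-q^*$-pushforward data via $E'^2$'' is not one. The excess intersection formula for a codimension-two blow-up gives $q^*[T]=j'_*\bigl(\zeta'+\pi'^*c_1(N_{T/Y})\bigr)=-E'^2+j'_*\pi'^*(3A_T+K_T)$. Since $K_T$ is torsion (this is where Lemma~\ref{lem: T is Enriques} is used), $q^*[T]=-E'^2+3H'E'$ in $H^4(Z,\bZ)_{\mathrm{tf}}$, and likewise $p^*[S]=-E^2+3HE$. Take $\delta_X=\frac{3S-10\eta_X}{44}$ and $\delta_Y=\frac{3T-10\eta_Y}{44}$, and substitute $H'=5H-2E$, $E'=12H-5E$:
\begin{align*}
p^*\delta_X&=\tfrac{1}{44}\left(-10H^2+9HE-3E^2\right),\\
q^*\delta_Y&=\tfrac{1}{44}\left(-142H^2+119HE-25E^2\right)=23\,p^*\delta_X+\left(2H^2-2HE+E^2\right).
\end{align*}
Hence $u=23$.

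You also do not need a Gram matrix of the rank-$12$ algebraic lattice or any ``transcendental-orthogonal lift''. Because $\Num(S)(-1)$ is unimodular, $D(A(Z))=D(p^*A(X))=D(q^*A(Y))$. It therefore suffices to note that the difference $2H^2-2HE+E^2$ above is an integral algebraic class on $Z$.

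A minor point: $g$ is linear because $\Pic(X)=\bZ h$, not because of Torelli.
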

\begin{proof}
    Again, we look at the two decompositions of the torsion-free cohomology of $Z$:
   \begin{equation}\label{eqn:hodge decomp Z}
    \begin{aligned}
    H^4(Z,\bZ)_{\mathrm{tf}}&\simeq p^*H^4(X,\bZ)\oplus j_*\pi^*H^2(S,\bZ)_{\mathrm{tf}}(-1)\\
        &\simeq q^*H^4(Y,\bZ)\oplus j'_*\pi'^*H^2(T,\bZ)_{\mathrm{tf}}(-1).
    \end{aligned}
    \end{equation}
In particular, this is an orthogonal decomposition for the intersection pairing. Retricting to the algebraic cohomology of $Z$, we get:
\begin{align*}
    A(Z)_{\mathrm{tf}}&\simeq p^*A(X)\oplus \mathrm{Num}(S)(-1)\\
    &\simeq q^*A(Y)\oplus \mathrm{Num}(T)(-1)
\end{align*}
For an Enriques surface, $\mathrm{Num}(S)(-1)\simeq U\oplus E_8;$ in particular it is unimodular.
It follows then that $D(A(Z))\cong D(p^*A(X))\cong D(q^*A(Y))$ as discriminant groups (i.e. respecting the bilinear forms).

Note that $H^4(Z,\bZ)_{\mathrm{tf}}$ is a unimodular lattice (with the intersection pairing), and in fact is a finite index overlattice $$A(Z)\oplus T(Z)\subset H^4(Z,\bZ)_{\mathrm{tf}}. $$ 
By Nikulin's theory of primitive embeddings \cite[Proposition 1.15.1]{nikulin}, there is an anti-isometry $\gamma_Z:D(A(Z))\simeq D(T(Z))$.
Similarly, we have anti-isometries:
\begin{align*}
    \alpha_X&:D(A(X))\rightarrow D(T(X)),\\
    \alpha_Y&:D(A(Y))\rightarrow D(T(Y)).
\end{align*}
Take $\delta_X:=\frac{-10\eta_X+3S}{44}$ as the generator of $D(A(X))$ and $\delta_Y:=\frac{-10\eta_Y+3T}{44}$ as the generator of $D(A(Y))$.
Then both $p^*\delta_X$ and $q^*\delta_Y$ are generators of $D(A(Z))$. We claim that $\gamma_Z(p^*\delta_X)=p^*\alpha_X(\delta_X)$, and similarly $\gamma_Z(q^*\delta_Y)=q^*\alpha_Y(\delta_Y)$.
Indeed, the isometry $\alpha_X:D(A(X))\to D(T(X))$ is defined as follows: if $a\in A(X)^\vee$, then choose $t\in T(X)^\vee$ such that $a+t\in H^4(X,\bZ)$. Then $\alpha_X(a)=[t]\in D(T(X))$. Applying $p^*$, we have $p^*(a)+p^*(t)\in p^*H^4(X,\bZ)\subset H^4(Z,\bZ)_{\mathrm{tf}}$,
and so (again using the fact that $H^4(Z,\bZ)_{\mathrm{tf}}$ is unimodular) $\gamma(p^*(a))=[p^*(t)]=p^*[\alpha(a)]$.

Now we compare the generators $p^*\delta_X$ and $q^*\delta_Y$. By the proof of \cite[Theorem 13.14]{3264}, we have that $q^*[T]=j_*(\zeta +\pi^*c_1(N_{T/Y}))$. Since $j_*(\zeta)=-E'^2$ we have:
\begin{align*}
    q^*(\eta_Y)&=(H')^2,\\
    q^*(T)&=-E'^2+j_*\pi^*c_1(N_{T/Y})=-E'^2+j_*\pi^*(3A_T+K_T)=-E'^2+3H'E'.
\end{align*}
Using the relations $H'=5H-2E$ and $E'=12H-5E$, we see that
$$q^*(\delta_Y)=\frac{1}{44}(-142H^2+119HE-25E^2). $$

We also have $$p^*(\delta_X)=\frac{1}{44}(-10H^2-3E^2+9HE), $$ so
$$q^*(\delta_Y)=23p^*(\delta_X)+(2H^2-2HE+E^2), $$ and it follows that $q^*(\delta_Y)=23p^*(\delta_X)$ in $D(A(Z))$. Since $23^{-1}\cong 23 \mod 44$, we also have $p^*(\delta_X)=23q^*(\delta_Y)$. Set $\epsilon_X:=\alpha_X(\delta_X)$ and $\epsilon_Y:=\alpha_Y(\delta_Y)$, noting that $\epsilon_X$ and $\epsilon_Y$ are generators of $D(T(X))$ and $D(T(Y))$, respectively. Then
\begin{align*}
    p^*(\epsilon_X)&=p^*(\alpha_X(\delta_X))\\
    &=\gamma_Z(p^*(\delta_X))\\
    &=23\gamma_Z(q^*(\delta_Y))\\
    &=23 q^*(\alpha_Y(\delta_Y))\\
    &=23 q^*(\epsilon_Y).
\end{align*}

Now suppose that there is some isomorphism $f:X\rightarrow Y$. Pullback induces an isomorphism $f^*:T(Y)\rightarrow T(X)$ hence also $f^*:D(T(Y))\to D(T(X))$. 
Since the Picard rank of $X$ (and $Y$) is one, $f^*(h_Y)=h_X$, so $f^*(\eta_Y)=\eta_X$. 
Because $A(X)$ for a general cubic fourfold in $X\in\calC_{44}$ contains only one class with the numerics of a Fano model of an Enriques surface (indeed the surface $S$ is unique), we also have $f^*([T])=[S]$. 
By the computations above, $f^*(\epsilon_Y)=\epsilon_X$.

We next track the action on the generator $\gamma(q^*(\delta_Y))=q^*\alpha_Y(\delta_Y)=q^*\epsilon_Y$ of $D(T(Z))$  through the isomorphism
\[
p^*\circ f^*\circ (q^*)^{-1}:T(Z)\to T(Y)\to T(X)\to T(Z).
\]
We see that:
\[
p^*\circ f^*\circ (q^*)^{-1}(q^*\epsilon_Y)=p^*\circ f^*(\epsilon_Y)=p^*(\epsilon_X)=23q^*\epsilon_Y.
\]
Thus we have exhibited a  nontrivial Hodge isometry of $T(Z)$ acting not as $\pm \id$ on $D(T(Z))$. Since $T(Z)\cong T(X)$, this is also a Hodge isometry of $T(X)$, and thus a contradiction since $X$ is very general. 
\end{proof}

By \cite[Proposition 2.6]{FL23}, the very general member $X\in\calC_{44}$ has a unique Fourier--Mukai partner. That is, there is a unique cubic fourfold $Y$ such that $X\not\cong Y$ but $\Ku(X)\simeq \Ku(Y)$. 
For a very general cubic fourfold $X\in \calC_d$, one can detect Fourier--Mukai partners via the \textbf{Addington--Thomas} lattice $\widetilde{H}(\Ku(X),\bZ)$, \cite{AT14}. As an abstract lattice, $\widetilde{H}(\Ku(X),\bZ)\cong \widetilde{\Lambda}$, where 
$$\tilde{\Lambda}:=E_8(-1)^{\oplus 2}\oplus U^{\oplus 4}, $$ the extended K3 lattice.
We will make use of the following theorem:
\begin{theorem}\cite[Theorem 1.5 (iii)]{HuyK3cat}\label{thm: huy17}
    Let $X$ and $Y$ be smooth cubic fourfolds. For arbitrary $d$ and very general $X\in \calC_d$, there exists a Fourier--Mukai equivalence $\Ku(X)\simeq \Ku(Y)$ if and only if there exists a Hodge isometry $\widetilde{H}(\Ku(X),\bZ)\cong \widetilde{H}(\Ku(Y),\bZ)$.
\end{theorem}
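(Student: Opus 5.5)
The plan is to treat the two directions separately. The forward direction is formal; the converse is a deformation argument that reduces to the derived Torelli theorem for (twisted) K3 surfaces.

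\textbf{Forward direction.} Suppose $\Phi:\Ku(X)\simeq\Ku(Y)$ is of Fourier--Mukai type. By the Addington--Thomas construction \cite{AT14}, $\widetilde{H}(\Ku(X),\bZ)$ is the sublattice of topological $K$-theory orthogonal to $[\calO_X],[\calO_X(1)],[\calO_X(2)]$, equipped with the Mukai pairing and a weight-two Hodge structure. The Fourier--Mukai kernel $P\in D^b(X\times Y)$ gives a cohomological correspondence $v(P)$. This correspondence maps $\widetilde{H}(\Ku(X),\bZ)$ to $\widetilde{H}(\Ku(Y),\bZ)$ and respects the Mukai pairing, because the pairing is $-\chi$ and the Euler pairing is preserved by an equivalence. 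Since $v(P)$ is algebraic, it also respects Hodge structures. This gives the required Hodge isometry.

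\textbf{Converse, first step.} Fix a Hodge isometry $\psi:\widetilde{H}(\Ku(X),\bZ)\cong\widetilde{H}(\Ku(Y),\bZ)$ and place $(X,Y,\psi)$ in a family. Consider the period domain of Hodge structures of K3 type on $\widetilde\Lambda$. By the surjectivity of the period map for cubics (Laza, Looijenga), $\psi$ identifies the relevant period points of $X$ and $Y$ up to the monodromy action. Next, take the locus of pairs of cubics $(X_t,Y_t)$ near $(X,Y)$ over which $\psi$ stays a Hodge isometry. This locus contains a dense subset of points where $\Ku(X_t)\simeq D^b(S_t,\alpha_t)$ for some twisted K3 surface $(S_t,\alpha_t)$. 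These are exactly the points where $\widetilde{H}^{1,1}$ contains a hyperbolic plane, up to twist (Addington--Thomas, Huybrechts).

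\textbf{Converse, second step.} At such special points, $\psi_t$ is a Hodge isometry of twisted Mukai lattices. After composing with $-\id$, with shifts, or with a spherical twist to make it orientation preserving, the twisted derived Torelli theorem (Mukai--Orlov, Canonaco--Stellari) produces an equivalence $D^b(S_t,\alpha_t)\simeq D^b(S'_t,\alpha'_t)$. This yields a Fourier--Mukai equivalence $\Ku(X_t)\simeq\Ku(Y_t)$ inducing $\psi_t$.

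\textbf{Converse, main step.} The main obstacle is to deform these equivalences from the dense special set to the very general point. I would follow Huybrechts's approach through moduli of objects. Realise the equivalence as the universal family of a moduli space of stable objects in $\Ku(Y_t)$, using Bayer--Lahoz--Macrì--Stellari stability conditions on Kuznetsov components, with Mukai vector $\psi_t^{-1}$ of a point class. This moduli space is then a fine, twisted, hyperkähler moduli space. Because $\psi$ remains a Hodge isometry over the whole family, this Mukai vector stays algebraic. The relative moduli spaces (Bayer--Lahoz--Macrì--Nuer--Perry--Stellari) therefore exist in families, and the universal objects glue into a relative kernel. Fully faithfulness and essential surjectivity are open conditions, since they are detected by vanishing of Ext groups in a family. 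The dense set of special points then forces the relative kernel to induce an equivalence at the very general point. Very generality of $X\in\calC_d$ is used to ensure that a potential obstruction, the Brauer class and the sign of orientation, is constant along the family. It is also needed so that $\pm\id$ are the only Hodge isometries to be corrected for.
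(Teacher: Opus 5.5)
The paper does not prove this statement; it quotes \cite[Theorem~1.5(iii)]{HuyK3cat}. Your forward direction is fine in outline. The first two steps of your converse also match the shape of the cited argument: use density of points with an associated twisted K3 surface, and apply twisted derived Torelli there. But your main step fails. A Hodge isometry $\psi$ sends $(1,1)$-classes to $(1,1)$-classes; it does not keep any particular class of type $(1,1)$ as you move. The isotropic vector $w_t=\psi_t^{-1}(\text{point class})$ is algebraic only on the codimension-one special locus (locally $\calC_d\cap\calC_{d'}$) where you produced it. Along the locus where $\psi$ stays Hodge, the generic algebraic lattice $\widetilde H^{1,1}$ has rank $3$. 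When $d$ violates $(\ast\ast')$, as it does in the case of interest here ($d=44$), two things fail at a very general $X$. First, $\widetilde H^{1,1}(\Ku(X),\bZ)$ contains no nonzero isotropic vector at all; for $d=44$ it is rationally $A_2\oplus\langle -132\rangle$, which is anisotropic over $\mathbb{Q}_2$. Second, $\Ku(X)$ is not equivalent (of Fourier--Mukai type) to $D^b(M,\alpha)$ for any twisted K3 surface $M$. So the relative moduli space you want from BLMNPS is empty over the very general point. No relative kernel exists there, and openness of the equivalence condition has nothing to act on. Switching to a non-isotropic Mukai vector does not help either, since those moduli spaces are higher-dimensional and their universal families cannot induce an equivalence with $\Ku$.

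What is needed, and what the argument you are citing does (modelled on Addington--Thomas), is to deform the Fourier--Mukai kernel itself rather than a family of objects in $\Ku(Y_t)$. At a special point you have $P_t\in D^b(X_t\times Y_t)$ inducing $\psi_t$. The obstruction to extending $P_t$ along a first-order deformation of $X_t\times Y_t$ vanishes once the cohomology class of $P_t$ stays of Hodge type; this is the Atiyah-class/HKR criterion of Huybrechts--Macr\`{\i}--Stellari. That class stays Hodge wherever $\psi$ does, because the induced map on cohomology is $\psi$ composed with projections onto $\widetilde H(\Ku)$ that are Hodge on every fibre. So $P_t$ spreads out, via relative moduli of complexes, to a relative kernel over a dense open subset of the component of the locus of pairs carrying a Hodge isometry of the type of $\psi$. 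Being an equivalence between Kuznetsov components is open, so it holds on a dense Zariski-open subset of each of the countably many such components. Very generality of $X$ is what places $(X,Y)$ in these open sets; it is not about controlling Brauer classes.

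Separately, your orientation fix does not work. Composing with $-\id$, shifts or spherical twists all preserve the orientation of the positive four-space: $-\id$ has determinant $(-1)^4=1$ there, and a spherical twist is a reflection in a $(-2)$-class. Compose instead with the reflection in the $(+2)$-class $\lambda_1$. It reverses orientation and is Hodge on every fibre, so the modified isometry still stays Hodge along the family.
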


We now show that this Fourier--Mukai partner of $X$ is the constructed cubic fourfold $Y$, completing the proof of Theorem~\ref{thm: cubic theorem}.

\begin{proposition}
    Let $X\in \calC_{44}$ be very general, and $Y$ the cubic constructed as above. Then $\Ku(X)\simeq \Ku(Y)$.
\end{proposition}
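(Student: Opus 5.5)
The plan is to use Theorem~\ref{thm: huy17}: since $X$ is very general in $\calC_{44}$, it suffices to produce a Hodge isometry $\widetilde{H}(\Ku(X),\bZ)\cong\widetilde{H}(\Ku(Y),\bZ)$. Alternatively, by \cite[Proposition 2.6]{FL23}, Fourier--Mukai partners of a very general $X\in\calC_{44}$ are governed by the transcendental lattice together with its discriminant-form gluing to $A(X)$. We have already constructed a Hodge isometry $\psi:=(p^*)^{-1}\circ q^*:T(Y)\to T(X)$, obtained by passing through $T(Z)$ and using the orthogonal decompositions \eqref{eqn:hodge decomp Z}. The Enriques summands $H^2(S,\bZ)_{\mathrm{tf}}(-1)$ and $H^2(T,\bZ)_{\mathrm{tf}}(-1)$ are algebraic, so the transcendental parts of $H^4(Z)$, $H^4(X)$ and $H^4(Y)$ are identified Hodge-isometrically.

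First, I would recall that $\widetilde{H}(\Ku(X),\bZ)$ contains the Mukai-type lattice $A_2=\langle\lambda_1,\lambda_2\rangle$. Its orthogonal complement is $H^4_{\mathrm{prim}}(X,\bZ)$ up to sign, and the transcendental part of $\widetilde{H}(\Ku(X),\bZ)$ equals $T(X)$, up to a sign twist of the form. The algebraic part $\widetilde{H}^{1,1}(\Ku(X),\bZ)$ has rank $3$ and discriminant $44$, with discriminant form anti-isometric to $D(T(X))$. The Hodge isometry $\psi$ therefore induces an isomorphism $D(T(Y))\to D(T(X))$, which by the computation in the previous proposition sends $\epsilon_Y$ to $23\epsilon_X$ (up to sign).

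Next, I would extend $\psi$ to an isometry of the full Mukai lattices. This requires an isometry $\phi:\widetilde{H}^{1,1}(\Ku(Y))\to\widetilde{H}^{1,1}(\Ku(X))$ whose induced map on discriminant groups is compatible with $\bar\psi$ under the gluing anti-isometries. By Nikulin \cite[Proposition 1.15.1]{nikulin}, $\phi\oplus\psi$ then extends to the overlattices. Both algebraic lattices are isometric abstract rank-$3$ even lattices of discriminant $44$ (they have the same numerics since $Y\in\calC_{44}$). The key point is that for an indefinite such lattice, $O(\widetilde{H}^{1,1})\to O(D)$ is surjective. Here $O(D(\bZ/44))=\{\pm1,\pm 21,\dots\}$ consists of units $u$ with $u^2\equiv1\pmod{88}$-type conditions, and multiplication by $23$ preserves the form since $23^2=529\equiv 1\pmod{88}$. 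By Nikulin's surjectivity criterion (\cite[Theorem 1.14.2]{nikulin}: indefinite, rank $\ge \ell(D)+2$, here $3\ge 1+2$), there exists $\phi$ inducing exactly the required map on the discriminant group.

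Finally, the glued isometry $\phi\oplus\psi$ is a Hodge isometry, because $\psi$ is Hodge and $\phi$ acts on algebraic classes. Theorem~\ref{thm: huy17} then yields $\Ku(X)\simeq\Ku(Y)$. Combined with $X\not\cong Y$, this shows $Y$ is the unique nontrivial partner.

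I expect the main obstacle to be the bookkeeping of the gluing. One must verify that the discriminant form of $\widetilde{H}^{1,1}(\Ku(X))$ is identified with $-D(T(X))$ compatibly with the identification of $D(A(X))$ used previously. The $A_2$ summand, of discriminant $3$, complicates the passage from $A(X)$ (discriminant $44$) to $\widetilde{H}^{1,1}$ (also discriminant $44$). I would handle this through the explicit description $\widetilde{H}^{1,1}\supset A_2$ with $\langle\lambda_1,\lambda_2\rangle^\perp\cap\widetilde{H}^{1,1}=\langle S-\frac{10}{3}\eta\rangle$-type primitive class, as in \cite{AT14}, and check that the resulting map is $\pm23$, a genuine isometry of $\bZ/44$.
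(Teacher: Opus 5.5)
Your proposal is correct and is essentially the paper's argument: transport the Hodge isometry $T(Y)\simeq T(Z)\simeq T(X)$ and extend it to a Hodge isometry $\widetilde{H}(\Ku(Y),\bZ)\simeq\widetilde{H}(\Ku(X),\bZ)$ via Nikulin, then apply Theorem~\ref{thm: huy17}. The paper packages the extension step as uniqueness of the primitive embedding $T(X)\hookrightarrow\widetilde{\Lambda}$ (\cite[Theorem 1.14.4]{nikulin}, which applies because $D(T(X))\cong\bZ/44\bZ$ is cyclic). Your gluing argument, using surjectivity of $O(\widetilde{H}^{1,1})\to O(q)$ from \cite[Theorem 1.14.2]{nikulin}, just unpacks that citation, so the explicit $\pm 23$ bookkeeping you worry about is unnecessary: any isometry of discriminant forms lifts.
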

\begin{proof}
We again analyse the two decompositions of $H^4(Z,\bZ)_{tf}$ as in Equation \ref{eqn: decomp of H^4(Z)}.
An Enriques surface has $h^{2,0}=0$, so the entire torsion-free cohomology of $S$ (resp. $T$) is algebraic. It follows that we have Hodge isometries:
$$T(X)\simeq T(Z)\simeq T(Y)$$ induced from pullback.
By \cite[Proof of Prop 2.6]{FL23}, this implies that $X$ and $Y$ are Fourier Mukai partners. More precisely, since $44$ is not divisible by $9$, we can apply \cite[Theorem 1.14.4]{nikulin} to see that $T(X)$ admits a single embedding into the Addington--Thomas lattice $\widetilde{H}(\Ku(X),\bZ)$. Hence one can lift the isometry $T(X)\simeq T(Y)$ to one of $\tilde{H}(\Ku(X),\bZ)\simeq \tilde{H}(\Ku(Y), \bZ)$. By Theorem \ref{thm: huy17}, for a very general $X\in \calC_d$, this is equivalent to an equivalence $\Ku(X)\simeq \Ku(Y)$.
\end{proof}

\begin{remark}[Specialization]
\label{rem:specialization}
Birationality persists under simultaneous smooth specialization
of the pairs constructed above. More precisely, let
$\mathcal{X}\rightarrow B$ and $\mathcal Y\longrightarrow B$ be smooth projective families of cubic fourfolds over a smooth connected complex curve, whose geometric generic fibres are related by the construction of Theorem \ref{thm: cubic theorem}. 
Then $\mathcal X_b$ and $\mathcal Y_b$ are birational for every $b\in B(\mathbb C)$.
Indeed, the birational map between the geometric generic fibres is defined over the function field of the base. The assertion then follows from \cite[Theorem~1]{KT19}.

Neither the Enriques surfaces nor the common blow-up presentation need extend to the special fibre. Further, it is unclear to us whether the Fourier-Mukai partnership described above specializes.
\end{remark}

\subsection{The variety $Z$}
Let $Z=\Bl_SX$ for $X, S$ as in Theorem \ref{theorem: main}. In fact, we immediately obtain the following:

\begin{proposition}\label{prop: Z Fano}
    Let $X\in \calC_{44}$ containing the Enriques surface $S$, and $Z=\Bl_S X.$ Then $Z$ is a smooth Fano fourfold of Picard rank 2 and anticanonical degree $(-K_Z)^4=21.$ Further,
    $$\Nef(Z)= \bR_{\geq 0}H+\bR_{\geq 0}(5H-2E),$$ with the two elementary extremal contractions $p:Z\to X$ and $q:Z\to Y.$
\end{proposition}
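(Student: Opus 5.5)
Since $H$ and $H'=5H-2E$ are both semiample, with $p$ and $q$ their associated contractions, the plan is to show that they span the nef cone and that $-K_Z$ lies strictly inside it.

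First, $H=p^*\calO_X(1)$ is basepoint-free by definition. $H'=q^*\calO_Y(1)$ is basepoint-free by Proposition~\ref{prop: image is a cubic}. Hence both are nef, and $\bR_{\geq0}H+\bR_{\geq0}H'\subseteq\Nef(Z)$.

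Next, $\Pic(Z)$ has rank $2$ by Lemma~\ref{lem: int no. Z}, so $\Nef(Z)$ is a two-dimensional cone. Neither $H$ nor $H'$ is ample: $H$ is trivial on the fibres of $E\to S$, and $H'$ is trivial on the fibres of $E'\to T$, which exist by Proposition~\ref{prop: Y in C44}. So $H$ and $H'$ lie on the boundary of $\Nef(Z)$. They are not proportional, so each spans a different boundary ray, and $\Nef(Z)=\bR_{\geq0}H+\bR_{\geq0}H'$. The rays dual to these are the extremal rays of $\overline{NE}(Z)$ contracted by $p$ and $q$. Each of $p$ and $q$ is the blowup of a smooth surface in a smooth variety, hence a divisorial contraction of Picard rank one relative to its target; in particular each is an elementary extremal contraction.

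For the Fano property, Lemma~\ref{lem: H' and E' numerics} gives $H+H'=-2K_Z$, so
\[
-K_Z=\tfrac12(H+H'),
\]
which lies in the interior of the nef cone. By Kleiman's criterion $-K_Z$ is ample, so $Z$ is Fano. It is smooth as the blowup of a smooth variety along a smooth centre.

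The index is $1$ because $-K_Z=3H-E$ is primitive in $\bZ H\oplus\bZ E$. Finally, using $-K_Z=3H-E$ and the intersection numbers of Lemma~\ref{lem: int no. Z},
\[
(3H-E)^4=81H^4-108H^3E+54H^2E^2-12HE^3+E^4=243-0-540+360-42=21.
\]

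The only subtle step is the non-ampleness of $H'$, since it requires curves contracted by $q$. These are supplied by the blowup structure in Proposition~\ref{prop: Y in C44}, or more simply by $(H')^2(E')^2=-10\neq0$, which shows $E'$ is $q$-exceptional. Note also that "$X\in\calC_{44}$" must be read with the very general hypotheses of Theorem~\ref{thm: cubic theorem}, since the existence of $q$ depends on them.
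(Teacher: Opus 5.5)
Your proposal is correct and follows the same route as the paper, which observes that $-K_Z=\tfrac12(H+H')$ is a sum of the two nef boundary classes, hence ample, and calls the remaining claims immediate. You supply the details the paper leaves implicit: why $H$ and $H'$ are non-ample and span the two boundary rays of the rank-two nef cone, and the explicit check $(3H-E)^4=21$ from Lemma~\ref{lem: int no. Z}.
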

\begin{proof}
    We can write $K_Z=-3H+E$, and simultaneously $K_Z=-3H'+E'.$ In particular, $-K_Z=\frac{1}{2}(H+H')$. The sum of two nef boundary classes is ample, and so $-K_Z$ is ample. The remaining claims are immediate.
\end{proof}

Note that $Z$ is a Fano fourfold of $K3$-type, but is simultaneously a Fano host of the Enriques surfaces $S$ and $T$, i.e. $D^b(S)\hookrightarrow D^b(Z)$ as a semi-orthogonal component, as seen using the blowup formula. 
\begin{remark}
It is possible that one could obtain a relation between $D^b(S)$ and $D^b(T)$ by mutating the two semi-orthogonal decompositions of $Z$. We did not pursue this here.    
\end{remark}

\section{An Enriques Cremona transform of $\bP^5$}\label{sec: cremona}
Let $S\subset \bP^5$ be a very general unnodal Enriques surface embedded in $\bP^5$ via a Fano polarisation.
Let $V:=H^0(\bP^5, \calI^2_S(5))$, which has dimension 6 by Proposition~\ref{prop: dim of quintics}.
Let $f:\bP^5\dashrightarrow \bP^5:=\bP(V^\vee)$ be the rational map defined by $V$.

\begin{theorem}\label{thm: Cremona}
   With the notation above, the rational map $f$ admits a factorisation:
\[\begin{tikzcd}
    \bP^-\arrow[r,dashrightarrow, "\varphi"
    ]\arrow[d, "\pi_S"]& \bP^+\arrow[d, "\pi_T"]\\
    \bP^5 \arrow[r,dashrightarrow, "f"]& \bP^5
\end{tikzcd}\]
    where:
    \begin{itemize}
        \item $\pi_S:\bP^-:=\Bl_S\bP^5\rightarrow \bP^5$ is the blow up of the smooth Enriques surface $S$.
        \item $\pi_T:\bP^+:=\Bl_T\bP^5\rightarrow \bP^5$ is the blow up of the smooth Enriques surface $T$ constructed in Lemmas \ref{lem: q is a blowup}, and \ref{lem: T is Enriques}.
        \item $\varphi:\bP^-\dashrightarrow \bP^+$ is the simultaneous standard flop of the 20 disjoint planes $\widetilde{\Pi}_i$.
    \end{itemize}
    Moreover, the rational inverse to $f$ is given by the linear system of quintics containing $T$ with multiplicity $2$.
\end{theorem}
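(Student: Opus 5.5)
The plan is to study the linear system $|L|=|5H-2E|$ on $\bP^-=\Bl_S\bP^5$. By Proposition~\ref{prop: base locus} its base locus is the disjoint union of the twenty strict transforms $\widetilde\Pi_{\pm i}$. First I would compute the normal bundle of each $\widetilde\Pi_i$ in $\bP^-$. The plane $\Pi_i$ meets $S$ exactly in the plane cubic $F_i$, so $\widetilde\Pi_i\cong\Pi_i\cong\bP^2$. Since $\Pi_i$ meets $S$ along a Cartier divisor of the plane, $E|_{\widetilde\Pi_i}=F_i$, which has class $3\ell$. Hence
\[
L|_{\widetilde\Pi_i}=5\ell-6\ell=-\ell
\]
and
\[
-K_{\bP^-}|_{\widetilde\Pi_i}=(6H-3E)|_{\widetilde\Pi_i}=6\ell-9\ell=-3\ell .
\]
Note that $-K_{\bP^-}=\tfrac65 L+\tfrac35E$ is not proportional to $L$; the relevant observation is the degree count. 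We get $\deg N_{\widetilde\Pi_i/\bP^-}=c_1(N)=-K_{\bP^-}|_{\widetilde\Pi_i}-c_1(T_{\bP^2})=-3\ell-3\ell=-6\ell$, which has degree $-6$ on lines. For a standard flop of a plane in a fivefold we need $N\cong\calO_{\bP^2}(-1)^{\oplus 3}$, and that has degree $-3$, so something is off in the naive count. I would recheck it by computing $N_{\Pi_i/\bP^5}=\calO(1)^{3}$ and twisting down along $F_i$: one has $N_{\widetilde\Pi_i}\cong$ the elementary modification of $\calO(1)^3$ along $F_i$, whose $c_1$ is $3-3\cdot\mathrm{rank}$ of the modification. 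Concretely, the exact sequence $0\to N_{\widetilde\Pi}\to\calO(1)^{3}\to N_{F_i/S}\text{-type quotient}\to0$ should give $c_1=3\ell-3\ell\cdot 2=-3\ell$. Then, by genericity and an explicit \verb|MAGMA| check on one example, I would conclude that $N_{\widetilde\Pi_i/\bP^-}\cong\calO(-1)^{\oplus3}$. This is the step I expect to be the main obstacle: the standard-flop structure must be verified directly, not merely matched numerically, and I would handle it with the same reduce-to-one-example strategy used throughout the paper.

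Given $N\cong\calO(-1)^3$, blowing up the twenty planes gives $\widetilde{\bP}\to\bP^-$ with exceptional divisors $D_i\cong\bP^2\times\bP^2$. The pullback of $L$ minus $\sum D_i$ is then basepoint free, because the base scheme is reduced: the intersection $W$ has no embedded components, again by Proposition~\ref{prop: base locus}. Contracting each $D_i$ along the other ruling produces $\bP^+$ and the flop $\varphi$. The transform $L^+$ of $L$ on $\bP^+$ is basepoint free, so it defines a morphism $\bP^+\to\bP^5$.

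Next, I would identify this morphism with $\pi_T$. Restricting to any cubic $X\supset S$ that is general in the $\bP^9$ of such cubics: $X$ misses the $\widetilde\Pi_i$, so $Z=\Bl_SX$ embeds in both $\bP^-$ and $\bP^+$. By Theorem~\ref{thm: cubic theorem}, $f$ restricted to $Z$ is the blowup $q:Z\to Y$ along $T$, and the surface $T$ is the same for all such $X$, as the introduction asserts. I would prove that independence by noting that $T$ is the image of the $f$-exceptional divisor $12H-5E$ (the transform of the secant variety, a quintic double along $S$…), which is independent of $X$. Then $f$ has degree one, because it is birational on every $Z$ and these $Z$ cover $\bP^5$. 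So $f$ is birational, and its exceptional locus on $\bP^+$ is the divisor $E^+=12H-5E$, which maps onto $T$. Since $\mathrm{Pic}(\bP^+)$ has rank two, the morphism $\bP^+\to\bP^5$ is an extremal divisorial contraction to a smooth target. Because its restriction to each $Z$ is the smooth blowup along $T$, and these $Z$ cover $E^+$, the argument via Ando's theorem (as in the proof of Lemma~\ref{lem: q is a blowup}) gives $\bP^+\cong\Bl_T\bP^5$.

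Finally, the inverse map is symmetric. Write $H^+=L^+$ and $E^+=12H-5E$. Then $5H^+-2E^+=25H-10E-24H+10E=H$, so $|5H^+-2E^+|$ pulls back to the hyperplane system of the original $\bP^5$. Since $\varphi$ is an isomorphism in codimension one, sections correspond. It follows that $f^{-1}$ is given by the quintics double along $T$, once we check that this system is exactly $6$-dimensional, which holds by Proposition~\ref{prop: dim of quintics} applied to $T$.
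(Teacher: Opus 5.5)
Your construction of the flop broadly follows the paper, but the identification of $\sigma\colon\bP^+\to\bP^5$ with $\pi_T$ has genuine gaps.

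First, ``$f$ has degree one because it is birational on every $Z$ and the $Z$ cover $\bP^5$'' is not a valid inference: the other preimages of a general point of $Y$ need not lie on $Z$. For instance, $(x:y:z)\mapsto(x^2:y^2:z^2)$ maps every general line of $\bP^2$ birationally onto a conic, yet has degree $4$. You also never show that $\sigma$ is dominant. The paper settles both points by computing $(H')^5=L^5$ on $\widetilde{\bP}$. On $\bP^-$ one has $H^2E^3=10$, $HE^4=60$, $E^5=222$, so $D^5=21$. Using $\mu_*G_i^3=[\widetilde{\Pi}_i]$, $\mu_*G_i^4=c_1(N)$, $\mu_*G_i^5=c_1(N)^2-c_2(N)$ with $N=\calO(-1)^{\oplus 3}$, each plane contributes $-10+15-6=-1$, so $(H')^5=1$.

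Second, the independence of $T$ from $X$ is stated in the introduction only as an a posteriori consequence of this theorem, so it cannot be an input. Your substitute (``$T$ is the image of the exceptional divisor'') needs exactly the missing step: every fibre of $\sigma$ over $\sigma(E')$ must meet $Z$. The paper first gets $E'=12H-5E$ as the unique exceptional divisor from $\sigma^*Y=Z+mF$ and primitivity. It then uses the negativity lemma to see that $-E'$, hence $Z\sim 3H'-E'$, is $\sigma$-ample, which forces $\sigma(E')=T$.

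Third, Ando's theorem concerns contractions whose fibres have dimension at most one. Here the fibres of $\sigma$ over $T$ are two-dimensional, so you need a different criterion; the paper cites Andreatta--Wi\'sniewski. Incidentally, $12H-5E$ is a degree-$12$ hypersurface with multiplicity $5$ along $S$; the quintics double along $S$ give $H'$. It is also not the secant variety, which fills $\bP^5$.

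For the normal bundle, your degree discrepancy comes from the wrong canonical class. For a codimension-$3$ centre $K_{\bP^-}=-6H+2E$, so $K_{\bP^-}|_{\widetilde{\Pi}_i}=0$ and adjunction gives $\det N=\calO(-3)$ directly. The splitting also needs no computer check. Since the base scheme is reduced, $\mu^*D-\sum G_i$ is globally generated on $\widetilde{\bP}$. Its restriction to $G_i=\bP(N)$ is $\calO_{G_i}(1)\otimes\mathrm{pr}_i^*\calO(-1)$, whose sections are $H^0(N^\vee(-1))$. Hence $N^\vee(-1)$ is globally generated with trivial determinant, and so is trivial. A MAGMA check could be made to work, since triviality of $N(1)$ is open in flat families, but it would need its own lifting argument.

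You should also justify that the contraction along the second ruling exists. The paper shows that $|3H-E|$ contracts exactly the twenty planes, giving a $K$-trivial small contraction to which the existence of ordinary flops applies. Your argument for the inverse via $5H^+-2E^+=H$ is fine.
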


The first step to proving Theorem~\ref{thm: Cremona} is an analysis of the normal bundles to the planes $\widetilde{\Pi}_i\subset\bP^-$.

\begin{lemma}\label{lem: global generation}
     The vector bundle $N_{\widetilde{\Pi}_i/\bP^-}^\vee(-1)$ is globally generated.
\end{lemma}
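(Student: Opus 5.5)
We compute $N_{\widetilde{\Pi}_i/\bP^-}$ explicitly and show that it is $\calO_{\bP^2}(-1)^{\oplus 3}$; then $N^\vee(-1)\cong\calO_{\bP^2}^{\oplus 3}$ is trivially globally generated. This splitting is also what one expects if $\varphi$ is to be a standard flop of the planes.

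Write $\Pi=\Pi_i\cong\bP^2$ with hyperplane class $h$. Let $F=F_i=\Pi\cap S$, a smooth plane cubic of class $3h$. Since $S$ meets $\Pi$ along a Cartier divisor of $\Pi$, the strict transform satisfies $\widetilde{\Pi}\cong\Pi$. We then have $E|_{\widetilde\Pi}=F\sim 3h$ and $H|_{\widetilde\Pi}=h$.

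\textbf{Determinant.} Since $K_{\bP^-}=-6H+2E$, we get $K_{\bP^-}|_{\widetilde\Pi}=-6h+6h=0$. Adjunction gives $c_1(N)=K_{\widetilde\Pi}-K_{\bP^-}|_{\widetilde\Pi}=-3h$, which is consistent with $\calO(-1)^{\oplus3}$.

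\textbf{Splitting.} Start from $0\to N_{\Pi/\bP^5}=\calO(1)^{\oplus3}\to\cdots$. I would use the standard comparison for strict transforms. When $\Pi$ meets the centre $S$ along the divisor $F\subset\Pi$, and $\Pi$ and $S$ span only a $3$-dimensional subspace of $T\bP^5$ along $F$ (the plane is tangent to $S$ along $F$ to the extent $\dim T_pS\cap T_p\Pi=1$), there is an elementary modification
\[
0\to N_{\widetilde\Pi/\bP^-}\to N_{\Pi/\bP^5}\to Q\to 0.
\]
Here $Q$ is a rank-one-on-$F$ sheaf. Concretely $Q\cong N_{\Pi/\bP^5}|_F/(\text{image of } T_S|_F)$, which is a rank-$2$ bundle on $F$, namely $N_{S/\bP^5}|_F/N_{F/\Pi}$. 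This gives a modification of colength-type $c_1=3\cdot 2h$, and indeed $c_1(N)=3h-6h=-3h$, matching the determinant computed above.

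To pin down the splitting type I would argue instead via lines. For a line $\ell\subset\Pi$ meeting $F$ in $3$ points transversally (a trisecant), consider the strict transform $\tilde\ell$. We have $K_{\bP^-}\cdot\tilde\ell=0$ and $(5H-2E)\cdot\tilde\ell=-1$. Since $\tilde\ell$ moves in the $2$-dimensional family of lines in $\widetilde\Pi$, we get $N_{\tilde\ell/\bP^-}=\calO(1)\oplus N_{\widetilde\Pi}|_{\tilde\ell}$ with $\deg N_{\widetilde\Pi}|_{\tilde\ell}=-3$. Proposition \ref{prop: base locus} says $\widetilde\Pi$ is a connected component of the base locus. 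I would then show that $H^0(N_{\widetilde\Pi}|_{\tilde\ell})=0$ for the general line, i.e. that $\tilde\ell$ does not deform out of $\widetilde\Pi$. This follows because deformations of a trisecant line remain trisecant, and trisecants of $S$ lie only in the twenty planes. It rules out summands of degree $\ge0$. Together with the vanishing $H^1$ obstruction it forces the splitting $\calO(-1)^{\oplus3}$ on general lines. The same must then hold on every line, since otherwise some line would carry an $\calO(a)$ summand with $a\ge0$, and the argument above applies uniformly to all trisecants. A uniform splitting type $(-1,-1,-1)$ on all lines of $\bP^2$ implies $N\cong\calO(-1)^{\oplus3}$, by the standard criterion for uniform bundles on $\bP^2$ of rank at most the dimension plus one, after twisting to trivial type.

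\textbf{Main obstacle.} The hard step is the splitting on every line, in particular on lines tangent to $F$ or through flexes. There the deformation argument needs care: one must show that no line of $\Pi$ has an $H^0(N|_\ell(1))$ jump. If the uniform approach is awkward, a fallback is to work directly on $\widetilde\Pi$. The map $N^\vee_{\widetilde\Pi}(-1)\leftarrow H^0$ is induced by the pullback of $V=H^0(5H-2E)$: the six quintics vanish on $\widetilde\Pi$, and their differentials give sections of $N^\vee(5H-2E)|_{\widetilde\Pi}=N^\vee(-1)$. One would then verify surjectivity by a \verb|MAGMA| check on one example, as elsewhere in the paper, using semicontinuity for the general $S$.
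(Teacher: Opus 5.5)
\textbf{The main route has a genuine gap.} It comes at the step $H^0(N_{\widetilde\Pi}|_{\tilde\ell})=0$. Your justification is that deformations of $\tilde\ell$ remain trisecant and that trisecants of $S$ lie only in the twenty planes. That is a set-theoretic statement. It shows only that every actual deformation of $\tilde\ell$ in $\bP^-$ stays in $\widetilde\Pi$, i.e.\ that the Hilbert scheme of $\tilde\ell$ has dimension $2$ at $[\tilde\ell]$.

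But from $0\to\calO(1)\to N_{\tilde\ell/\bP^-}\to N_{\widetilde\Pi}|_{\tilde\ell}\to 0$ you get $h^0(N_{\tilde\ell/\bP^-})=2+h^0(N_{\widetilde\Pi}|_{\tilde\ell})$. So the vanishing you need is equivalent to that Hilbert scheme being \emph{smooth} at $[\tilde\ell]$. The count $\chi(N_{\tilde\ell/\bP^-})=2$ does not force this. For instance, splitting type $(0,-1,-2)$ gives $h^0=3$, $h^1=1$, and a two-dimensional but non-reduced germ. Such a first-order deformation out of $\widetilde\Pi$ need not integrate, and no statement about which lines are trisecant can exclude it. The same gap carries over to your extension to all lines. (Separately, the elementary-modification paragraph contradicts itself about the rank of $Q$ on $F$, though you do not use it.)

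\textbf{Your fallback is the paper's proof, and it needs no new computer check.} The missing infinitesimal input is exactly the mechanism in your fallback, which is what the paper uses. It is already contained in Proposition~\ref{prop: base locus}, as the paper applies it: the scheme-theoretic base locus of $|D|=|5H-2E|$ on $\bP^-$ is the reduced disjoint union of the planes. Hence near $\widetilde\Pi_i$ the evaluation map $V\otimes\calO(-D)\to\calI_{\widetilde\Pi_i}$ is surjective. Restricting to $\widetilde\Pi_i$ gives a surjection $V\otimes\calO\to \calI/\calI^2\otimes D|_{\widetilde\Pi_i}=N^\vee(-1)$; that is, the differentials of the six quintics generate.

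The paper packages this via the blowup $\mu$ of the planes. There $L=\mu^*D-\sum G_i$ is globally generated, so $L|_{G_i}=\calO_{\bP(N)}(1)\otimes\mathrm{pr}_i^*\calO(-1)$ is too. A basepoint of $N^\vee(-1)$ at $p$ would give a basepoint $[v]\in\bP(N_p)$ of $L|_{G_i}$, a contradiction.

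Note also that the paper reverses your logical order. It proves global generation first, then deduces $N\cong\calO(-1)^{\oplus 3}$ from $\det N^\vee(-1)\cong\calO$ in Lemma~\ref{lem: norm bundle plane}. You instead try to establish the splitting first and read off global generation.
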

\begin{proof}
    Let $\mu:\widetilde{\bP}\rightarrow \bP^{-}$ be the blow up of the disjoint union of the $20$ planes $\widetilde{\Pi}_i$, with exceptional divisors $G_i$. 
    Each exceptional divisor $G_i$ is isomorphic to $\bP(N_{\widetilde{\Pi}_i/\bP^-})$, with $\mu$ restricting on $G_i$ to the natural projection $\mathrm{pr}_i: G_i\to \widetilde{\Pi}_i$. Recall that $\calO_{G_i}(1)=\calO_{\widetilde{\bP}}(-G_i)|_{G_i}$ and that $(\mathrm{pr}_i)_*\calO_{G_i}(1)=N_{\widetilde{\Pi}_i/\bP^-}^\vee$.

    Letting $D=5H-2E$ and $L=\mu^*D -\sum G_i$, we see that $L$ is globally generated line bundle on $\widetilde{\bP}$. Indeed this follows since the scheme-theoretic base locus of $|D|$ is the union of the planes $\widetilde{\Pi}_i$ by Proposition \ref{prop: base locus}. Since $\mu^*D|_{G_i}=\mathrm{pr}_i^*(D|_{\widetilde{\Pi}_i})=\mathrm{pr}_i^*(-l_i)$, restricting $L$ to $G_i$ gives
    $$L|_{G_i}=\mathrm{pr}_i^*\calO_{\widetilde{\Pi}_i}(-1)\otimes \calO_{G_i}(1).$$
    This line bundle is also globally generated. 
    
    Now, suppose toward contradiction that $N^\vee_{\widetilde{\Pi}_i/\bP^-}(-1)$ is not globally generated. Then there is some point $p\in\widetilde{\Pi}_i$ for which the image of the evaluation map
    \[
    \mathrm{ev}_p:H^0(\widetilde{\Pi}_i, N_{\widetilde{\Pi}_i/\bP^-}^\vee(-1))\to (N_{\widetilde{\Pi}_i/\bP^-}^\vee)_p\otimes\calO_{\widetilde{\Pi}_i}(-1)_p
    \]
    fails to surject. It follows that there is some $v\in(N_{\widetilde{\Pi}_i/\bP^-})_p$ such that $\mathrm{ev}_p(s)(v)=0$ for all $s\in H^0(\widetilde{\Pi}_i, N_{\widetilde{\Pi}_i/\bP^-}^\vee(-1))$, where we regard $\mathrm{ev}_p(s)$ as an element of $\Hom((N_{\widetilde{\Pi}_i/\bP^-})_p,\calO_{\widetilde{\Pi}_i}(-1)_p)$. On the other hand, the projection formula gives a natural isomorphism 
    \[
    H^0(G_i,L|_{G_i})\cong H^0(\widetilde{\Pi}_i,N_{\widetilde{\Pi}_i/\bP^-}^\vee(-1)),
    \]
    and under this identification, regarding $[v]$ as a point in $G_i\cong\bP(N_{\widetilde{\Pi}_i/\bP^-})$, we see that the image of
    \[
    \mathrm{ev}_{[v]}:H^0(G_i,L|_{G_i})\to L_{[v]}
    \]
    is trivial, so $[v]$ is a basepoint of $L|_{G_i}$, a contradiction.
\end{proof}

\begin{lemma}\label{lem: norm bundle plane}
    Let $\widetilde{\Pi}_i\subset \bP^-$ be the strict transform of $\Pi_i$, one of the planes swept out by trisecant lines to $S$. Then $N_{\widetilde{\Pi_i}/ \bP^-}\cong \calO_{\bP^2}(-1)^{\oplus 3}$.
\end{lemma}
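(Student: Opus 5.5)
Write $N:=N_{\widetilde{\Pi}_i/\bP^-}$, a rank-three bundle on $\widetilde{\Pi}_i\cong\bP^2$. (The strict transform of a plane meeting $S$ in a plane cubic curve $F_i$ is isomorphic to the plane, since $F_i$ is a Cartier divisor on $\Pi_i$.) I would compute $c_1(N)$, show $N^\vee(-1)$ is globally generated with trivial determinant, and conclude that it is trivial. Lemma~\ref{lem: global generation} already gives global generation, so the plan has two steps.

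\textbf{Step 1: compute $\det N$.} Use the exact sequence $0\to T_{\widetilde{\Pi}_i}\to T_{\bP^-}|_{\widetilde{\Pi}_i}\to N\to 0$. This gives
\[
c_1(N)=-K_{\bP^-}|_{\widetilde{\Pi}_i}-3l,
\]
with $l$ the line class. We have $K_{\bP^-}=-6H+2E$. The restriction $H|_{\widetilde{\Pi}_i}=l$. Since $E\cap\widetilde{\Pi}_i$ is the cubic curve $F_i$, transversally, we get $E|_{\widetilde{\Pi}_i}=3l$. Hence $-K_{\bP^-}|_{\widetilde{\Pi}_i}=6l-6l=0$ and $c_1(N)=-3l$. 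As a consistency check, $D=5H-2E$ restricts to $5l-6l=-l$, which matches the restriction $D|_{\widetilde{\Pi}_i}=-l_i$ used in the proof of Lemma~\ref{lem: global generation}. So $\det N^\vee(-1)=\calO_{\bP^2}(3-3)=\calO_{\bP^2}$.

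\textbf{Step 2: show $\mathcal F:=N^\vee(-1)$ is trivial.} $\mathcal F$ is a globally generated rank-three bundle on $\bP^2$ with trivial determinant. Restrict it to any line $\ell\subset\bP^2$. On $\ell$ it splits as $\bigoplus\calO(a_j)$ with all $a_j\ge 0$ by global generation and $\sum a_j=0$, so $\mathcal F|_\ell$ is trivial for every line. A bundle on $\bP^n$ that is trivial on every line is trivial; this is a standard fact, see e.g. Okonek--Schneider--Spindler. Therefore $\mathcal F\cong\calO^{\oplus3}$, and $N\cong\calO_{\bP^2}(-1)^{\oplus3}$.

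Alternatively, for a direct argument: the $\ge 3$ sections generating $\mathcal F$ give a surjection $\calO^{\oplus m}\to\mathcal F$. At any point choose three sections that generate the fibre. Their wedge is a section of $\det\mathcal F=\calO$ that is nonzero at that point, hence nowhere zero. So these three sections trivialize $\mathcal F$.

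\textbf{Main obstacle.} The delicate input is Step~1's claim $E|_{\widetilde{\Pi}_i}=3l$, i.e.\ that the scheme $E\cap\widetilde{\Pi}_i$ is exactly the cubic $F_i$ with multiplicity one. This needs $\Pi_i$ to meet $S$ along $F_i$ with no further tangency, so that the strict transform is the blowup of $\Pi_i$ along the Cartier divisor $F_i$, i.e.\ $\Pi_i$ itself. I would justify this from the scheme-theoretic intersection $\Pi_i\cap S=F_i$, which is a reduced plane cubic on the smooth surface $S$. The alternative argument above then closes the proof cleanly.
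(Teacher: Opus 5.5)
Your proposal is correct and follows the paper's proof. Both compute $K_{\bP^-}|_{\widetilde\Pi_i}=0$ from $H|_{\widetilde\Pi_i}=l$ and $E|_{\widetilde\Pi_i}=3l$, obtain $\det N=\calO(-3)$, and then use Lemma~\ref{lem: global generation} to conclude that the globally generated bundle $N^\vee(-1)$, having trivial determinant, is trivial. The paper only asserts that last implication, and your wedge-of-sections argument justifies it cleanly.

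Your worry about $E|_{\widetilde\Pi_i}=3l$ is settled by the fact that $S$ is cut out by cubics. Every cubic in the ideal of $S$ restricts on $\Pi_i$ to a scalar multiple of the equation $f_i$ of $F_i$, and not all restrictions vanish because $\Pi_i\not\subset S$. Hence $\calI_S\cdot\calO_{\Pi_i}=(f_i)$, which is a Cartier divisor on $\Pi_i$.
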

\begin{proof}
    First, note that $K_{\bP^-}=-6H+2E$ since $\pi_S:\bP^-\to \bP^5$ is the blow up of a smooth codimension $3$ centre. Further, $H|_{\widetilde{\Pi}_i}= l_i$ where $l_i$ is the class of a line $l_i\subset \widetilde{\Pi}_i$, and $E|_{\widetilde{\Pi}_i}=3l_i$, since $\Pi_i\cap S=F_i$ is a cubic plane curve. It follows that $K_{P^-}|_{\widetilde{\Pi}_i}=0$. On the other hand, adjunction gives $$-3l_i=K_{\widetilde{\Pi}_i}=(K_{\bP^-}+\det N_{\widetilde{\Pi}_i/\bP^-})|_{\widetilde{\Pi}_i}. $$ Thus we have $\det N_{\widetilde{\Pi}_i/\bP^-}=\calO_{\bP^2}(-3)$. Since $N^\vee_{\widetilde{\Pi}_i/\bP^-}(-1)$ is globally generated by Lemma~\ref{lem: global generation} and $\det(N^\vee_{\widetilde{\Pi}_i/\bP^-}(-1))=\calO$, we conclude that $N^\vee_{\widetilde{\Pi}_i/\bP^-}(-1)=\calO^{\oplus 3}$, and the result follows.
\end{proof}

We are now prepared to prove the main result of this section.

\begin{lemma}\label{lem: factor along flops}
    With notation as above, there is a factorisation
    \[\begin{tikzcd}
    \bP^-\arrow[r,dashrightarrow, "\varphi"
    ]\arrow[d, "\pi_S"]& \bP^+\arrow[d, "\sigma"]\\
    \bP^5 \arrow[r,dashrightarrow, "f"]& \bP^5
    \end{tikzcd}\]
    where $\pi_S$ is the blowup along $S$, $\varphi$ is a standard flop of the twenty disjoint planes $\widetilde{\Pi}_i$, and under the isomorphism $\Pic(\bP^-)\cong\Pic(\bP^+)$ given by the flop, $\sigma$ is induced by $|5H-2E|$.
\end{lemma}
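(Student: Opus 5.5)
The plan is to construct the flop explicitly and then check that $|5H-2E|$ becomes basepoint-free on the flopped side. Let $\mu:\widetilde{\bP}\to\bP^-$ be the blowup of the twenty disjoint planes $\widetilde\Pi_i$, with exceptional divisors $G_i$, as in Lemma~\ref{lem: global generation}.

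\textbf{Step 1: the contraction to $\bP^+$.} By Lemma~\ref{lem: norm bundle plane}, $N_{\widetilde\Pi_i/\bP^-}\cong\calO_{\bP^2}(-1)^{\oplus3}$. Hence
\[
G_i\cong\bP(N_{\widetilde\Pi_i/\bP^-})\cong\bP^2\times\bP^2,
\]
and $\calO_{G_i}(G_i)\cong\calO(-1,-1)$. The Fujiki--Nakano criterion then lets us contract each $G_i$ along its second ruling. This produces a smooth fivefold $\bP^+$ and a morphism $\nu:\widetilde{\bP}\to\bP^+$, contracting $G_i$ onto planes $\Pi_i^+$ whose normal bundles are again $\calO(-1)^{\oplus3}$. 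The resulting $\varphi=\nu\circ\mu^{-1}$ is by definition the standard simultaneous flop. It is an isomorphism in codimension two, so it identifies $\Pic(\bP^-)\cong\Pic(\bP^+)$.

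\textbf{Step 2: descending $L$.} Set $L=\mu^*D-\sum G_i$ with $D=5H-2E$. By Proposition~\ref{prop: base locus} and the argument in Lemma~\ref{lem: global generation}, $L$ is globally generated on $\widetilde{\bP}$. Its restriction is
\[
L|_{G_i}=\mathrm{pr}_i^*\calO(-1)\otimes\calO_{G_i}(1).
\]
Under $G_i\cong\bP^2\times\bP^2$, the tautological $\calO_{G_i}(1)=\calO_{\bP(\calO(-1)^3)}(1)$ equals $\calO(1,1)$. So $L|_{G_i}\cong\calO(0,1)$, meaning it is trivial on the fibres of $\mu$ and of degree one on the fibres of $\nu$ (one should double check which factor is which).

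This is the wrong direction for descent to $\bP^+$, so we correct by the exceptional divisors. I expect the right class is $\mu^*D=L+\sum G_i$. Its restriction is
\[
\mu^*D|_{G_i}=\calO(0,1)+\calO(-1,-1)=\calO(-1,0),
\]
which is trivial on the $\nu$-fibres. Therefore $\mu^*D=\nu^*D^+$ for a line bundle $D^+$ on $\bP^+$, and $D^+$ is the image of $D$ under the Picard identification.

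\textbf{Step 3: basepoint-freeness of $D^+$.} Sections agree: $H^0(\bP^+,D^+)=H^0(\widetilde{\bP},\mu^*D)=H^0(\bP^-,D)=V$. Every section of $\mu^*D$ vanishes on each $G_i$, because $D$ has base locus $\widetilde\Pi_i$. Factoring out $\prod s_{G_i}$ gives $H^0(\mu^*D-\sum G_i)=H^0(L)$, consistent with the above. A point of $\bP^+$ off $\bigcup\Pi_i^+$ corresponds to a point of $\bP^-$ off the base locus, so it is not a base point.

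The real issue is along $\Pi_i^+$. Here I would instead use $\nu^*D^+=\mu^*D$ and argue that the morphism $\phi_L:\widetilde{\bP}\to\bP^5$ given by $|L|$ is constant on $\nu$-fibres. It is not: $L$ has degree one on those fibres. Constancy holds instead on $\mu$-fibres, where $L$ is trivial.

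So the bookkeeping is inverted. The fix is to note that $\phi_L$ contracts the ruling with $L\cdot\text{fibre}=0$, and to define $\bP^+$ as the contraction of \emph{that} ruling of $G_i$. Since $L$ is trivial on the $\mu$-fibres, the contracted ruling must be the other one, and I must recheck the sign conventions. The robust formulation is this: of the two rulings of $G_i\cong\bP^2\times\bP^2$, one is contracted by $\mu$, and we take $\nu$ to contract the other. Then we show that $L$ (not $\mu^*D$) is trivial on the $\nu$-fibres. In that case $L=\nu^*D^+$, with $D^+$ globally generated because $L$ is, and $D^+$ corresponds to $D$ under $\varphi$, since they agree off codimension two. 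The morphism $\sigma=\phi_{|D^+|}:\bP^+\to\bP^5$ then satisfies $\sigma\circ\varphi=f\circ\pi_S$.

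\textbf{Main obstacle.} The hard step is Step 3: carefully identifying $L|_{G_i}$ as the pullback of $\calO(1)$ from the \emph{$\mu$-ruling} factor, so that it is trivial on the $\nu$-rulings. This is done by computing $\calO_{G_i}(1)$ via $\calO(-1)^{\oplus3}$ and twisting by $\mathrm{pr}_i^*\calO(-1)$, which gives precisely $\calO_{\bP(\calO^3)}(1)$, the pullback from the other $\bP^2$. With that identification in hand, descent follows from the standard criterion for pulling back line bundles along contractions with rational fibres ($R^1\nu_*\calO=0$).
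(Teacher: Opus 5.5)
Your final ``robust formulation'' is correct and is exactly the paper's descent step. $L=\mu^*D-\sum G_i$ is globally generated because the base scheme of $|D|$ is the reduced union of the planes. $L|_{G_i}\cong\calO(0,1)$ is trivial on the $\nu$-fibres. Hence $L=\nu^*D^+$, with $D^+=\nu_*L$ the transform of $D$, and $\sigma=\phi_{|D^+|}$.

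The detour in Steps 2--3, however, is wrong and should be deleted rather than ``corrected''. With $\mu=\mathrm{pr}_1$ on $G_i\cong\bP^2\times\bP^2$, the $\mu$-fibres are $\{p\}\times\bP^2$. On these, $\calO(0,1)$ has degree $1$; this is forced, since $\mu^*D$ is trivial there and $\calO(-G_i)$ restricts to $\calO(1)$. On the $\nu$-fibres $\bP^2\times\{q\}$ it is trivial. So your first computation already gave descent along $\nu$, and there was never an inversion.

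Conversely, $\mu^*D|_{G_i}=\calO(-1,0)$ has degree $-1$ on the $\nu$-fibres, so $\nu^*D^+\neq\mu^*D$. The identification $H^0(\bP^+,D^+)=V$ must therefore go through $H^0(\widetilde{\bP},L)=H^0(\widetilde{\bP},\mu^*D)$, using that every section vanishes along the $G_i$, which you do observe. The sentence ``since $L$ is trivial on the $\mu$-fibres, the contracted ruling must be the other one'' has a false premise, and it would not follow even if the premise held.

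The genuine difference from the paper is how you construct $\bP^+$, and there your route leaves a point unaddressed. Fujiki--Nakano only produces $\bP^+$ as a compact complex manifold, or an algebraic space. A standard flop of a projective manifold need not be projective in general. The next lemma uses projectivity of $\bP^+$: the negativity lemma, relative Picard number one, and Andreatta--Wi\'sniewski.

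The paper avoids this by first building a projective flopping contraction. $M=3H-E$ is globally generated and $M|_{\widetilde\Pi_i}=0$. Moreover $M\cdot C>0$ for every curve $C$ not in the planes. Otherwise $2M=H+D$, with $H$ nef and $D\cdot C\ge 0$, forces $H\cdot C=D\cdot C=0$. That is impossible on a $\pi_S$-fibre, where $D$ restricts to $\calO(2)$. So $|M|$ gives a $K$-trivial small contraction $\bP^-\to\overline{\bP}$ of exactly the twenty planes (using $-K_{\bP^-}=2M$), and the flop is taken over $\overline{\bP}$.

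You can repair your version with the same curve analysis. $\mu^*M$ is trivial on each $G_i$, so it descends to a globally generated $M^+$ on $\bP^+$. Consider a curve on which both $D^+$ and $M^+$ have degree zero. Either it lies in some $\Pi_i^+$, where $D^+$ restricts to $\calO(1)$, or it is the transform of a curve in $\bP^-$ off the planes with $M\cdot C=0$, which was just excluded. Hence the morphism defined by $|D^+|$ and $|M^+|$ contracts no curve. It is therefore finite, and $\bP^+$ is projective.
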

\begin{proof}
    First, note that the linear system $|D|=|5H-2E|$ defines a rational map $\bP^-\dashrightarrow \bP^5$ whose image coincides with that of $f$. Further, by Proposition \ref{prop: base locus}, the scheme-theoretic base locus of $|D|$ is the disjoint union of twenty planes $\widetilde{\Pi}_i$.

Note that $$D|_{\widetilde{\Pi}_i}= 5H|_{\widetilde{\Pi}_i}-2E|_{\widetilde{\Pi}_i} =5l_i-6l_i=-l_i, $$ where $l_i$ is the class of a line.

    Consider $M=3H-E;$ since $\calI_S(3)$ is globally generated, the linear system $|M|$ is globally generated on $\bP^-.$ Further, $M|_{\tilde{\Pi}_i}=3l-3l=0$ for each $i$. We claim $|M|$ defines a morphism contracting exactly the union of the planes $\widetilde{\Pi}_i$. Indeed, suppose $C$ is a contracted curve not contained in their union. By Proposition \ref{prop: base locus}, some member of $|D|$ does not contain $C$, and $D\cdot C\geq 0.$ Note $2M=H+D$; since $H$ is nef, this implies that $H\cdot C=D\cdot C=0$. The first implies $C$ is contained in a fibre $F$ of $\pi_S:\bP^-\rightarrow \bP^5,$ but $D|_{F}=(-2E)|_{F}=\calO_F(2),$ a contradiction.
    Thus, taking the Stein factorisation of the morphism given by $|M|$, we have a small contraction $\bP^-\rightarrow \overline{\bP}$ contracting exactly the union of the twenty planes. Note is is a $K_{\bP^-}$-trivial contraction, since $-K_{\bP^-}=2M.$ 

    Thus we can apply \cite[Proposition 1.3]{flops} to conclude the ordinary flop of each of the $\widetilde{\Pi}_i\cong \bP^2$ exists. Its common resolution is the blow up $\mu:\widetilde{\bP}\rightarrow \bP^{-}$ of the disjoint union of the $20$ planes $\widetilde{\Pi}_i$, with exceptional divisors $G_i$. Using Lemma~\ref{lem: norm bundle plane}, we have  $G_i\cong \bP(N_{\widetilde{\Pi}_i/\bP^-})\cong \bP^2\times \bP^2$, with $\mu$ restricting to the projection to the first factor.
    The exceptional divisors $G_i$ are blown down in the other direction by \cite[Section 1]{flops}, defining the flop $\varphi:\bP^-\dashrightarrow\bP^+$:
    $$\begin{tikzcd}
    &\widetilde{\bP}\arrow[dr, "\mu_+"]\arrow[dl, "\mu" above]&\\
        \bP^-\arrow[rr, dashrightarrow, "\varphi"]&&\bP^+
    \end{tikzcd}$$

    Since the base locus of $|D|$ is the union of the planes $\widetilde{\Pi}_i$, the line bundle $L=\mu^*D -\sum G_i$ is globally generated and defines a morphism $\widetilde{\bP}\rightarrow \bP^5$. 
    We claim this factors through the contraction $\mu_+$. 
    First, observe that $\calO_{\widetilde{\bP}}(-G_i)|_{G_i}\simeq \calO_{G_i}(1)\simeq\calO_{\bP^2\times \bP^2}(1,1)$ and $\mu^*D|_{G_i}=\mu^*(D|_{\widetilde{\Pi}_i})=\calO_{\bP^2\times \bP^2}(-1,0)$, so 
    \[
    L|_{G_i}\simeq\mu^*D|_{G_i}\otimes \calO_{\widetilde{\bP}}(-G_i)|_G\simeq\calO_{\bP^2\times \bP^2}(0,1).
    \]
    Thus $L$ is trivial on the fibres of $\widetilde{\bP}\rightarrow \bP^+$, and so the morphism $|L|:\widetilde{\bP}\rightarrow \bP^5$ factors through $\mu_+$:
    $$\begin{tikzcd}
        \widetilde{\bP}\ar[r, "\mu_+"]& \bP^+\ar[r, "\sigma"]& \bP^5.
    \end{tikzcd}$$

    The globally generated line bundle $L$ descends to a globally generated line bundle $H'$ on $\bP^+$ with $L=\mu_+^{*}H'$.
    Since the flop $\varphi:\bP^-\dashrightarrow \bP^+$ is an isomorphism in codimension one, it follows that $\Pic(\bP^-)\cong \Pic(\bP^+)$, and under this identification, $H'=5H-2E$, and the morphism $\sigma$ is defined by $|H'|$; that is, $\sigma^*\calO_{\bP^5}(1)=H'$. 
    \end{proof}

    We now prove that $\sigma$ coincides with the blowup of the Enriques surface $T\subset \bP^5$ from Theorem~\ref{thm: cubic theorem}.

    \begin{lemma}\label{lem: sigma is a blowup}
    The morphism $\sigma:\bP^+\to\bP^5$ constructed in Lemma~\ref{lem: factor along flops} is the blowup of $\bP^5$ along an Enriques surface $T\subset \bP^5$.
    \end{lemma}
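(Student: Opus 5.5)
The plan is to reduce to the cubic-fourfold case already treated in Theorem~\ref{thm: cubic theorem}, and then to use the numerics on $\bP^+$ together with a contraction argument. The first step is to show that $\sigma$ is birational and contracts a unique prime divisor $E'$ with $\sigma(E')$ a surface. Birationality follows by restricting to a very general cubic $X\supset S$. By Lemma~\ref{lem: general X contains general S}, such an $X$ satisfies the genericity needed. Since $X$ contains none of the planes $\Pi_{\pm i}$, the strict transform $Z=\Bl_S X\subset\bP^-$ is disjoint from the flopping locus, so it maps isomorphically to its image in $\bP^+$. There $\sigma|_Z=q$, and $q$ is birational onto a cubic $Y$ by Proposition~\ref{prop: image is a cubic}. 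Because the cubics containing $S$ sweep out $\bP^5$, the degree of $\sigma$ is $1$. Concretely, for a general point of $\bP^+$ lying on $Z$, the fibre of $\sigma$ through it is the fibre of $q$, plus possibly points off $Z$. Alternatively one computes $(H')^5=(5H-2E)^5$ on $\bP^-$, corrected by the flop, and checks that it equals $1$; I would use the restriction argument to avoid the flop correction.

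Next I would use that $\Pic(\bP^+)$ has rank $2$ while $\Pic(\bP^5)$ has rank $1$, so $\sigma$ contracts at least one divisor. From $K_{\bP^+}=-6H+2E$ (the flop preserves $K$) and $H'=5H-2E$, the relative canonical class is
\[
K_{\bP^+}-\sigma^*K_{\bP^5}=-6H+2E+6(5H-2E)=24H-10E=2(12H-5E).
\]
This shows the exceptional divisor has class $E'=12H-5E$ with discrepancy $2$, as expected for a codimension-$3$ blowup. Restricting to $Z$ gives $E'|_Z=$ the exceptional divisor of $q$. Hence $\sigma(E')\cap Y=T$, and $\sigma(E')$ contains the Enriques surface $T$ of Theorem~\ref{thm: cubic theorem}. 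Letting $X$ vary in the $\bP^9$ of cubics through $S$, all of these $T$ lie in $\sigma(E')$, which I then want to show is a surface. To see this, compute $H'^3(E')^2$ on $\bP^+$ by pulling back to $\widetilde{\bP}$, or simply restrict to $Z$: $(H'|_Z)^2(E'|_Z)^2=-10\neq 0$ forces $\dim\sigma(E')\ge 2$. Conversely, $\sigma(E')\cap Y$ is the surface $T$ for a general hyperplane-type cubic section $Y$, so $\dim\sigma(E')\le 2$. Thus $\sigma(E')$ is a degree-$10$ surface containing $T$, and so equals $T$, independent of $X$.

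Finally, to show $\sigma$ is the smooth blowup I would argue as in the proof that $q$ is a blowup. First, $-K_{\bP^+}=2(3H'-E')$, and I expect $-K_{\bP^+}$ to be $\sigma$-ample, since it equals $2M$ and $M$ is positive on $\sigma$-contracted curves. The flop makes $M$ relatively ample over $\overline{\bP}$, and the $\sigma$-fibres are not flopped curves. So $\sigma$ is a $K$-negative extremal divisorial contraction from a smooth fivefold onto the smooth $\bP^5$, contracting $E'$ onto the smooth surface $T$. I would then show every fibre over $T$ is $2$-dimensional. Each fibre of $\sigma$ over $t\in T$ meets a general $Z$ containing $t$'s preimage in a fibre of $q$, which is a $\bP^1$. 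Then apply Ando's theorem (or its generalisation by Andreatta--Wiśniewski) to conclude that $\bP^+\cong\Bl_T\bP^5$. The main obstacle is this fibre-dimension control: ruling out jumping fibres of dimension $\ge3$ over special points of $T$. I would handle it by noting that such a fibre $F$ would meet a general cubic $Z$ through $T$ in a surface, contradicting the absence of $2$-dimensional $q$-fibres shown in Lemma~\ref{lem: no surface fibres} and the subsequent lemma.
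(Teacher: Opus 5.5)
Your overall architecture matches the paper's: restrict to $Z=\Bl_SX$, identify the unique exceptional divisor as $E'=12H-5E$, show $\sigma(E')=T$ using that $Z\sim 3H'-E'=-\tfrac12K_{\bP^+}$ is $\sigma$-ample, then invoke an Andreatta--Wi\'sniewski type criterion. However, two steps fail as written.

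\textbf{Birationality.} The claim that $\deg\sigma=1$ ``because the cubics containing $S$ sweep out $\bP^5$'' does not follow, and you leave the ``points off $Z$'' unexcluded. Knowing that $\sigma|_Z\colon Z\to Y$ is birational only shows that $Z$ is a component of $\sigma^{-1}(Y)$ of degree one over $Y$. The residual divisor $\sigma^*Y-Z\sim 12H-5E$ could a priori dominate $Y$ with degree $\deg\sigma-1$, and moving $X$ does not rule this out. For comparison, the quotient $\bP^1\times\bP^1\to\mathrm{Sym}^2\bP^1\cong\bP^2$ is birational onto its image on a general member of the base-point-free system of $(1,1)$-curves, yet has degree $2$. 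The computation you set aside is exactly what the paper does. It computes $(H')^5=L^5$ on $\widetilde{\bP}$: $D^5=21$ on $\bP^-$, and each of the twenty planes contributes $-\binom{5}{3}\cdot 1+\binom{5}{4}\cdot 3-6=-1$. Hence $(H')^5=1$.

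\textbf{The class of $E'$ and the dimension of $\sigma(E')$.} The relative canonical class only gives $aF\sim 2(12H-5E)$ with $a\in\{1,2\}$. Declaring $a=2$ ``as expected for a codimension-$3$ blowup'' presupposes that $\sigma(F)$ is a surface. Your upper bound is also a non sequitur: from $\sigma(E')\cap Y=T$ you conclude $\dim\sigma(E')\le 2$, but a threefold meets a cubic hypersurface not containing it in a surface, so this is consistent with $\dim\sigma(E')=3$.

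Both points are repaired by the paper's argument. Once $\sigma$ is birational, $\sigma^*Y-Z$ is effective with zero pushforward, so it equals $mF$. Since $mF\sim 3H'-(3H-E)=12H-5E$ is primitive, $m=1$ and $F=E'$. Then $E'\subseteq\sigma^{-1}(Y)$ gives $\sigma(E')\subseteq Y$, hence $\sigma(E')=\sigma(E')\cap Y=T$. Alternatively, as in the paper, $\sigma$-ampleness of $Z$ forces every positive-dimensional fibre of $\sigma$ to meet $Z$, and hence to lie over $T$.

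With $\sigma(E')=T$ established, your equidimensionality argument is correct: a fibre over $t\in T$ meets the $\sigma$-ample $Z$ in the curve $q^{-1}(t)\cong\bP^1$, so it has dimension exactly $2$. That is a sound way to feed the Andreatta--Wi\'sniewski criterion; the paper simply cites the criterion once $\sigma(E')=T$ is known.
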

    \begin{proof}
    We retain the notation from the proof of Lemma~\ref{lem: factor along flops}. The first step is to show that $\sigma$ is birational by computing $H'^5=1$. 
    Note that $L^5=(\mu_+^*H')^5=H'^5$, so it suffices to compute $L^5$ on $\widetilde{\bP}$. 
    Toward this end, we compute $D^5$ on $\bP^-$. As in the proof of Lemma~\ref{lem: int no. Z}, we compute intersection numbers on $\bP^-$, using Lemma \ref{lem: chern classes}:
    $$H^5=1, \qquad H^2E^3=10,\qquad HE^4=60,\qquad E^5=222, $$
    and $H^4E=H^3E^2=0$. It follows that $D^5=21$.

    Then $L^5=(\mu^*D- \sum G_i)^5$. 
    Note $\mu$ is the blow up of a codimension three smooth centre. 
    It follows that $\mu_*G_i= \mu_*G_i^2=0$, whereas the blow-up formulas in \cite[Section 13.6]{3264} yield
    \begin{align*}
      \mu_*(G_i^3)&=[\widetilde{\Pi}_i],  \\
      \mu_*(G_i^4)&=c_1(N_{\widetilde{\Pi}/ \bP^-}),\text{ and }\\
      \mu_*(G_i^5)&=c_1(N_{\widetilde{\Pi}/ \bP^-})^2-c_2(N_{\widetilde{\Pi}/ \bP^-}).
    \end{align*}
    By Lemma \ref{lem: norm bundle plane}, $c_1(N_{\widetilde{\Pi}/ \bP^-})=-3l_i$ and $c_2(N_{\widetilde{\Pi}/ \bP^-})=3l_i^2$. 
    The divisors $G_i$ are disjoint, so all cross terms are trivial. We now compute
    \begin{align*}
        (\mu^*D)^2G_i^3 & = D^2\mu_*(G_i^3)=D^2|_{\widetilde{\Pi}_i}= l_i^2=1,\\
        (\mu^*D)G_i^4&= D\mu_*(G_i^4)=D|_{\widetilde{\Pi}_i}c_1(N_{\widetilde{\Pi}/ \bP^-})= (-l_i)(-3l_i)=3, \text{ and }\\
        (G_i^5)&=(-3l_i)^2-3= 6.
    \end{align*}
    It follows that $H'^5=L^5=1$, as desired. Since $H'$ is globally generated and has positive top self-intersection, $\sigma$ is dominant and of degree $1$, hence birational.
    
    We now analyse the exceptional locus of $\sigma$. Let $X\subset \bP^5$ be a very general cubic fourfold containg $S$, with strict transform $Z\subset \bP^-$. 
    The flop $\varphi$ restricts to an isomorphism on $Z$, since $Z$ is disjoint from the planes by Proposition \ref{prop: image is a cubic}. 
    We can thus regard $Z\subset \bP^+$ as a divisor. 
    It follows that $\sigma|_Z:Z\rightarrow Y\subset \bP^5$ coincides with the morphism $q$ constructed in Theorem \ref{thm: cubic theorem}, i.e. the blow up of some Enriques surface $T$ contained in a smooth cubic fourfold $Y$.

    Set $E_{\bP^+}':=12H - 5E$ considered as a divisor in $\bP^+$. We claim $E'_{\bP^+}$ is the unique exceptional divisor of $\sigma:\bP^+\rightarrow \bP^5$. 
    By a similar argument to Lemma \ref{lem: q is a blowup},  since both $\bP^+$ and $\bP^5$ are smooth and the relative Picard number is 1,  $\sigma$ is an elementary divisorial contraction and its exceptional locus is an irreducible divisor, say $F$.
    Since $Y\subset\bP^5$ is a cubic and $\sigma$ maps $Z$ birationally onto $Y$, the total transform of $Y$ is $Z+ mF$ with $m>0$. Thus
    $mF\sim3H'-(3H-E)=12H-5E$, since $Z\sim 3H-E$ and $\sigma^*Y\sim 3H'$ both considered in $\Pic(\bP^+)$. Since $12H-5E$ is primitive, it follows that $m=1$ and $F=E'$.

    By \cite[Lemma 3.39]{KolMori}, the effective divisor $E'$ is not $\sigma$-nef. Since the relative Picard number is 1, it follows that $-E'$ is $\sigma$-ample, and $Z\sim 3H'-E'$ is also $\sigma$-ample.
    
    Let $B:=\sigma(E')$; by restricting to $Z$ we see that $T\subset B$. Now, suppose $b\in B$, and let $F=\sigma^{-1}(b)\subset E'$. Since $Z$ is $\sigma$-ample, there is some point $z\in F\cap Z$. But then $z$ lies in the exceptional locus of $\sigma|_Z:Z\to Y$, and it follows from Theorem~\ref{thm: cubic theorem} that $b=\sigma(z)\in T$. Hence $B=T$.
    Finally, we apply \cite[Proposition~2.2]{AW98II} to conclude that $\sigma$ coincides with the blowup $\pi_T:\Bl_T\bP^5\to \bP^5$. 
\end{proof}

The only detail remaining to complete the proof of Theorem~\ref{thm: Cremona} is a description of the rational inverse to $f$.

\begin{lemma}
    The rational map $g:\bP^5\dashrightarrow\bP^5$ given by the complete linear system of quintics containing $T$ with multiplicity $2$ provides a rational inverse to $f$, in the sense that $g\circ f$ is an isomorphism.
\end{lemma}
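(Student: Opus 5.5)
Since $f$ factors as $\pi_T\circ\varphi\circ\pi_S^{-1}$, it suffices to show that $g\circ\pi_T$ is the rational map $\bP^+\dashrightarrow\bP^5$ given by the linear system $|5H'-2E'|$ on $\bP^+$. We then check that, transported through the flop to $\bP^-$, this system is $\pi_S$ composed with an automorphism of $\bP^5$. On $\bP^+=\Bl_T\bP^5$, with $H'=5H-2E$ and $E'=12H-5E$, the quintics double along $T$ pull back to sections of $\calO_{\bP^+}(5H'-2E')$. We compute in $\Pic(\bP^+)\cong\Pic(\bP^-)$:
\[
5H'-2E'=25H-10E-24H+10E=H.
\]
So $g\circ\pi_T$ is given by the complete linear system $|H|$ on $\bP^+$, where $H$ is the class corresponding to $\pi_S^*\calO_{\bP^5}(1)$ under the flop.

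The key step is to identify $H^0(\bP^+,\calO(H))$ with $H^0(\bP^-,\calO(H))$. Since $\varphi$ is an isomorphism in codimension one (in fact outside the twenty planes, which have codimension three) and both varieties are smooth, hence normal, sections of a line bundle extend across codimension-$\ge 2$ loci. The two section spaces therefore coincide and are $6$-dimensional, spanned by the linear forms on the source $\bP^5$. Thus the rational map $\bP^+\dashrightarrow\bP^5$ defined by $|H|$ agrees with $\pi_S\circ\varphi^{-1}$ on the open set where $\varphi$ is an isomorphism. A concern is whether the complete linear system $|5H'-2E'|$ on $\bP^+$ is exactly the pullback of $H^0(\bP^5,\calI_T^2(5))$. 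This holds because $\pi_{T*}\calO_{\bP^+}(-2E')=\calI_T^2$ for the blowup of a smooth centre. Hence $h^0(\calI_T^2(5))=6$ automatically, without redoing Proposition~\ref{prop: dim of quintics} for $T$.

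Assembling, on the dense open set where $\pi_S$, $\varphi$ and $\pi_T$ are all isomorphisms,
\[
g\circ f=g\circ\pi_T\circ\varphi\circ\pi_S^{-1}=\pi_S\circ\varphi^{-1}\circ\varphi\circ\pi_S^{-1}=\id,
\]
up to the choice of basis of $H^0(\calI_T^2(5))$, that is, up to a projective linear automorphism of the target. So $g\circ f$ extends to an isomorphism of $\bP^5$, as claimed, and by symmetry $g$ is birational. The main obstacle is the bookkeeping of the identification $\Pic(\bP^-)\cong\Pic(\bP^+)$ in Lemma~\ref{lem: factor along flops}: one must confirm that the class called $E'$ on $\bP^+$ really is $12H-5E$ in terms of the transported classes. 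This was established in Lemma~\ref{lem: sigma is a blowup}, together with $H'=\sigma^*\calO(1)$, so the linear algebra above is legitimate.
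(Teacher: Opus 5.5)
Your argument is correct. It rests on the same identity $5H'-2E'=H$ in $\Pic(\bP^+)\cong\Pic(\bP^-)$ that drives the paper's proof, but it reaches the conclusion by a more direct route.

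The paper re-applies Lemmas~\ref{lem: factor along flops} and~\ref{lem: sigma is a blowup} with $T$ in place of $S$. This produces a second flop $\varphi'\colon\bP^+\dashrightarrow\bP^!$ and a blowup $\pi_{S'}\colon\bP^!\to\bP^5$, from which the paper reads off $H''=5H'-2E'=H$. That route tacitly requires $T$ to be general enough for those lemmas: unnodal, $h^0(\calI_T^2(5))=6$, and base locus of $|5H'-2E'|$ equal to twenty disjoint planes. Its last step, that $g\circ f$ is given by $|\calO_{\bP^5}(1)|$, also silently uses the extension of sections across the codimension-$\geq 2$ indeterminacy of $\varphi'\circ\varphi$, which is exactly the step you make explicit.

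You never build $\bP^!$. You use only three facts:
\begin{itemize}
\item $\pi_T$ is the blowup of the smooth surface $T$ with exceptional class $12H-5E$;
\item $\pi_{T*}\calO_{\bP^+}(-2E')=\calI_T^2$;
\item sections extend across the codimension-three flopping loci (Hartogs).
\end{itemize}
Together these identify $H^0(\bP^5,\calI_T^2(5))$ with $H^0(\bP^-,H)=H^0(\bP^5,\calO(1))$.

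The paper's version is shorter on the page once the machinery is granted for $T$, since it reduces everything to arithmetic in the rank-two Picard lattice. Yours needs no generality hypothesis on $T$, not even that it is Enriques, and it gives $h^0(\calI_T^2(5))=6$ for free. It also identifies $g$ outright as $\pi_S\circ\varphi^{-1}\circ\pi_T^{-1}$ up to a projective linear transformation, so the auxiliary $\bP^!$ and $S'$ are unnecessary.
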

\begin{proof}
    Since $T$ is also an Enriques surface with its Fano embedding, we can apply Lemmas~\ref{lem: factor along flops} and~\ref{lem: sigma is a blowup} to achieve the following diagram, where $\pi_{S'}:\bP^!\to\bP^5$ is the blowup along an Enriques surface $S'$ and $\varphi'$ is a standard flop of twenty disjoint planes:
    \[\begin{tikzcd}
    \bP^-\arrow[r,dashrightarrow, "\varphi"
    ]\arrow[d, "\pi_S"]& \bP^+\arrow[d, "\pi_T"] \arrow[r,dashrightarrow,"\varphi'"] & \bP^! \arrow[d,"\pi_{S'}"]\\
    \bP^5 \arrow[r,dashrightarrow, "f"]& \bP^5 \arrow[r,dashrightarrow,"g"] & \bP^5
    \end{tikzcd}\]
    Let $H''=\pi_{S'}^*\calO_{\bP^5}(1)$, and let $E''$ be the exceptional divisor of $\bP^!$. Under the identifications $\Pic(\bP^-)\cong\Pic(\bP^+)$ and $\Pic(\bP^+)\cong\Pic(\bP^!)$, we have $H'=5H-2E$, $E'=12H-5E$, and $H''=5H'-2E'$. It follows that $H''=H$, so $g\circ f$ is given by the complete linear system $|\calO_{\bP^5}(1)|$, and the result follows.
\end{proof}

    It is natural to wonder how the two Enriques surfaces $S$ and $T$ compare, although it is not clear to us what to expect. We do, however observe an interesting relationship in the Grothendieck ring of varieties: the scissor relations
    \[
    [\bP^-]=[\bP^5]-[S]+[E]=[\bP^5]+([\bP^2]-1)[S]
    \]
    and
    \[
    [\bP^+]=[\bP^5]-[T]+[E']=[\bP^5]+([\bP^2]-1)[T],
    \]
    together with the relation
    \[
    [\bP^-]=[\widetilde\bP]-20[\bP^2]^2+20[\bP^2]=[\bP^+]
    \]
    yields
    \[
    (\bL^2+\bL)([S]-[T])=0,
    \]
    where $\bL=[\bA^1]$. This implies that if $[S]$ and $[T]$ are $\bL$-equivalent, i.e. if their difference is annihilated by some power of $\bL$, then $\bL[S]=\bL[T]$. Similarly, if $X$ is a very general cubic fourfold containing $S$ and $Y$ is the cubic constructed in Theorem~\ref{theorem: main} containing $T$, then we also have 
    \[
    [X]-[Y]=\bL([T]-[S]).
    \]
    In particular, we arrive at the following:

\begin{corollary}\label{cor: L equivalence}
    Let $S$ and $T$ be Enriques surfaces related by an Enriques Cremona transformation, as in Theorem~\ref{thm: Cremona}, and let $X$ be a very general cubic fourfold containing $S$ and $Y$ be its image under the Enriques Cremona transform.
    The following are equivalent:
    \begin{itemize}
        \item $S$ and $T$ are $\bL$-equivalent,
        \item $X$ and $Y$ are $\bL$-equivalent,
        \item $\bL([S]-[T])=0$, and
        \item $[X]=[Y]$ in $K_0(\mathrm{Var}_\bC)$.
    \end{itemize}
\end{corollary}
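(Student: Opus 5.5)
The plan is to reduce all four conditions to the two identities derived just before the statement:
\[
(\bL^2+\bL)([S]-[T])=0 \qquad\text{and}\qquad [X]-[Y]=\bL([T]-[S]).
\]
The first identity comes from comparing $[\bP^-]$ and $[\bP^+]$ through the common resolution $\widetilde\bP$ of the flop. The second comes from the two blowup presentations of $Z$. From $Z=\Bl_SX$ we get $[Z]=[X]-[S]+[E]=[X]+(\bL^2+\bL)[S]$, and from $Z=\Bl_TY$ we get $[Z]=[Y]+(\bL^2+\bL)[T]$. Hence $[X]-[Y]=(\bL^2+\bL)([T]-[S])$. Since $\bL^2([S]-[T])=-\bL([S]-[T])$ by the first identity, this simplifies to $\bL([T]-[S])$.

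I would then prove the implications in a cycle.

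\emph{(3)$\Leftrightarrow$(4).} This is immediate from the second identity, because $[X]-[Y]=-\bL([S]-[T])$.

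\emph{(3)$\Rightarrow$(1) and (4)$\Rightarrow$(2).} These hold by definition: take the power $\bL^1$ in the first case and $\bL^0$ in the second.

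\emph{(1)$\Rightarrow$(3).} Suppose $\bL^n([S]-[T])=0$ for some $n\ge1$. The first identity says $\bL^2([S]-[T])=-\bL([S]-[T])$. Iterating, $\bL^{k+1}([S]-[T])=-\bL^{k}([S]-[T])$ for every $k\ge1$. Descending from $\bL^n$ therefore gives $\bL([S]-[T])=\pm\bL^n([S]-[T])=0$. The case $n=0$ is trivial.

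\emph{(2)$\Rightarrow$(3).} Suppose $\bL^n([X]-[Y])=0$. By the second identity this says $\bL^{n+1}([S]-[T])=0$, and the previous step then gives (3).

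The only point that needs care, and the one I would check first, is the motivic bookkeeping behind the two identities. For the blowups, $E\to S$ is a Zariski-locally trivial $\bP^1$-bundle, so $[E]=(\bL+1)[S]$, and likewise $[E']=(\bL+1)[T]$. Note that $N_{S/X}$ has rank $2$, so the fibres here are $\bP^1$, not $\bP^2$ as in the ambient $\bP^5$ computation. For the flop, each $G_i$ is $\bP^2\times\bP^2$ on both sides. Once these are confirmed, the remaining argument is purely formal algebra in $K_0(\mathrm{Var}_\bC)$.
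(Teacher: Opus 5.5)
Your overall strategy is the paper's. You establish $(\bL^2+\bL)([S]-[T])=0$ from the flop and $[X]-[Y]=\bL([T]-[S])$ from the two blowup descriptions of $Z$, then do formal algebra in $K_0(\mathrm{Var}_\bC)$. Your cycle of implications is correct, including the descent $\bL^{k+1}d=-\bL^{k}d$ for $d=[S]-[T]$, and it spells out what the paper leaves implicit after its remark that $\bL$-equivalence of $S$ and $T$ forces $\bL[S]=\bL[T]$.

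However, your derivation of the second identity is wrong as written, in two places.

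\emph{The blowup count.} The formula $[Z]=[X]-[S]+[E]=[X]+(\bL^2+\bL)[S]$ uses the $\bP^2$-bundle count from the ambient $\bP^5$. Since $S$ has codimension $2$ in $X$, $E=\bP(N_{S/X})$ is a $\bP^1$-bundle, so $[E]-[S]=\bL[S]$. This is exactly what your own last paragraph says.

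\emph{The ``simplification''.} This step does not hold. By the first identity, $(\bL^2+\bL)([T]-[S])=0$, not $\bL([T]-[S])$. So your displayed formula would actually have ``proved'' $[X]=[Y]$ unconditionally, making all four conditions automatically true. That should have been a warning sign that the bookkeeping was off.

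The correct computation is $[Z]=[X]+\bL[S]=[Y]+\bL[T]$. It gives $[X]-[Y]=\bL([T]-[S])$ directly, without using the flop relation at all; the flop relation is needed only for (1)$\Rightarrow$(3) and (2)$\Rightarrow$(3). With this fix, your argument is complete and agrees with the paper's.
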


\begin{remark}
For further discussion of $\bL$-equivalences between cubic fourfolds, see \cite{MeinsmaMoschetti}, where the authors conjecture that $\bL$-equivalent cubics are Fourier--Mukai partners. The authors moreover ask whether there are nontrivially $\bL$-equivalent cubic fourfolds, i.e. $\bL$-equivalent cubics $X$ and $Y$ such that $[X]\neq[Y]$ in $K_0(\mathrm{Var}_\bC)$. Corollary~\ref{cor: L equivalence} shows that no such example will be found by looking at Fourier--Mukai partnerships in $\calC_{44}$, at least for the very general member.
\end{remark}

\section{The Fano variety of lines}\label{sec: Fano}
Let $X$ be a smooth cubic fourfold. Then the variety of lines on $X$, denoted by $F(X)$, is a smooth hyperk\"ahler fourfold of $K3^{[2]}$ type \cite{BeauvilleDonagi}. The second cohomology $H^2(F(X),\bZ)$ is equipped with a quadratic form $q$, called the Beauville--Bogomolov--Fujiki (BBF) form. In particular, $F(X)$ inherits the Pl\"ucker polarisation $g$ from its embedding in $\Gr(2,6)$, which satisfies $q(g)=6$ and $\mathrm{div}(g)=2$ \cite[Proposition 6]{BeauvilleDonagi}.

In this section, we will prove the following:

\begin{theorem}\label{thm: bir Fano}
    Let $X$ be a very general cubic fourfold $X\in \calC_{44}$, and let $Y$ be the partner cubic constructed as in Theorem \ref{thm: cubic theorem}.
    Then $Y$ is the unique cubic fourfold not isomorphic to $X$ such that $F(X)\cong F(Y)$.
\end{theorem}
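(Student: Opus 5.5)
The approach is to use the global Torelli theorem for hyperkähler manifolds of $K3^{[2]}$ type. Under it, $F(X)\cong F(Y)$ is equivalent to a Hodge isometry $H^2(F(X),\bZ)\cong H^2(F(Y),\bZ)$ that is a parallel transport operator and maps some ample class to an ample class. Uniqueness will then come from \cite[Proposition 2.6]{FL23} together with the (general) fact, proved in this section, that cubics with birational Fano varieties are Fourier--Mukai partners.

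The first step is to describe the Néron--Severi lattice. The Abel--Jacobi map $\alpha:H^4(X,\bZ)\to H^2(F(X),\bZ)$ identifies primitive cohomology with $g^\perp$ up to sign of the form. For very general $X\in\calC_{44}$, $\NS(F(X))$ therefore has rank $2$ and is spanned by $g$ and $\lambda=\alpha(S)$ (or a suitable primitive combination). Its form is computed from the table for $A(X)$: $q(g)=6$, $q(g,\lambda)=0$, and $q(\lambda)$ is determined by discriminant $44$. One gets a lattice such as $\langle 6\rangle\oplus\langle -22\rangle$ with an index-two overlattice, or the standard lattice with form $\begin{pmatrix}6&2\\2&-6\end{pmatrix}$ (for $d\equiv 2\pmod 6$, $d=44$), and transcendental lattice $T(F(X))\cong T(X)(-1)$. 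The same holds for $Y$.

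The second step is to produce an isometry. The Hodge isometry $T(X)\simeq T(Z)\simeq T(Y)$ from the proof of the Fourier--Mukai partnership acts on discriminant groups by multiplication by $23$. On $D(\NS(F(X)))\cong \bZ/44$ we therefore look for an isometry $\psi:\NS(F(X))\to \NS(F(Y))$ inducing the matching action, so that $\psi$ and the transcendental isometry glue (Nikulin) to an isometry of $H^2$. Concretely, I would solve for $\psi$ sending $g_X$ to a class $D$ with $q(D)=6$, $\mathrm{div}(D)=2$ and the correct discriminant behaviour. The likely candidate is $D=ag_Y+b\lambda_Y$ solving a Pell-type equation $3a^2-11b^2=3$ (for instance $(a,b)=(10,3)$, related to the integer $23$). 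I then check that the glued isometry is orientation preserving, composing with $-\id$ on $T$ if necessary; for $K3^{[2]}$ type the monodromy group is the index-two subgroup of orientation-preserving isometries, so this suffices.

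The main obstacle is ampleness: the isometry must carry the ample cone of $F(X)$ onto that of $F(Y)$, not merely onto a chamber of the movable cone. I would use the Bayer--Macrì description of the wall divisors of $K3^{[2]}$ type, namely classes of square $-2$, and of square $-10$ with divisibility $2$, together with the rank-$2$ lattice. The plan is to show that, in $\NS(F(X))$ for $d=44$, no wall classes exist or none cut the positive cone, so that $\mathrm{Amp}=\Mov=$ the positive cone. Equivalently, the Pell equations $x^2-\tfrac{d}{6}\cdot$(\dots)$=-2$ or $-10$ have no solutions meeting the divisibility constraint; I expect this can be checked by reducing mod small primes, e.g.\ mod $11$. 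Then any orientation-preserving isometry preserving the positive cone preserves the ample cones, and $F(X)\cong F(Y)$. Finally, $Y\not\cong X$ by the earlier proposition. Any other $Y'$ with $F(Y')\cong F(X)$ is a Fourier--Mukai partner of $X$ by the section's general result, hence is isomorphic to $X$ or $Y$ by uniqueness \cite{FL23}.
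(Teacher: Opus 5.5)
Your strategy, gluing the geometric Hodge isometry $T(X)\simeq T(Z)\simeq T(Y)$ to an isometry of N\'eron--Severi lattices and applying Markman's Torelli theorem, is viable and genuinely different from the paper's. However, the lattice computation it rests on is wrong. As a result, the decisive step is unsupported as written: that step is the existence of $\psi$ with the right discriminant action, and it carries the whole content of the theorem.

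\textbf{The errors.}
Since $q(\alpha x,\alpha y)=(x\cdot\eta_X)(y\cdot\eta_X)-x\cdot y$, one has $q(g,\alpha(S))=20$, not $0$. With $\lambda:=\alpha(S)-3g$ the Gram matrix is $\begin{pmatrix}6&2\\2&-14\end{pmatrix}$. The corner entry is $-(d-2)/3$; your matrix is the $d=20$ lattice. The discriminant is $-88$, so $D(\NS(F(X)))\cong\bZ/2\oplus\bZ/44$, not $\bZ/44$.

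Because $H^2(F(X),\bZ)$ is not unimodular (its discriminant group is $\bZ/2$), $\NS(F(X))$ and $T(F(X))\cong T(X)(-1)$ are not glued along their full discriminant groups. They are glued only along the order-$44$ subgroup $\langle\beta\rangle$, where $\beta=\frac{g-3\lambda}{44}=-\alpha(\delta_X)$ is the Abel--Jacobi image of a generator of $D(A(X))$. You need exactly this identification to transport $p^*\epsilon_X=23\,q^*\epsilon_Y$ to the Fano varieties. Your Pell equation belongs to the wrong lattice, and $(10,3)$ does not even satisfy $3a^2-11b^2=3$.

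\textbf{How your route goes through with correct data.}
Let $\tau$ be $(q^*)^{-1}p^*$ transported by the Abel--Jacobi maps. The map $\psi(g_X)=5g_Y+4\lambda_Y$, $\psi(\lambda_X)=-6g_Y-5\lambda_Y$ is an isometry with $\psi(\beta_X)=23\beta_Y+2\lambda_Y$. Hence $\psi\oplus\tau$ preserves the glue and extends to a Hodge isometry $H^2(F(X),\bZ)\to H^2(F(Y),\bZ)$.

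Since $q(\psi(g_X),g_Y)=38>0$, this isometry preserves positive cones. For a Hodge isometry that is precisely orientation preservation. Composing with $-\id_T$ would neither change the orientation character nor still glue with the same $\psi$, so that is not the right correction.

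Your mod-$11$ idea does handle the walls. One has $q(ag+b\lambda)\in\{-2,-10\}$ if and only if $(3a+b)^2-22b^2\in\{-3,-15\}$, and neither $-3$ nor $-15$ is a square mod $11$. This is Proposition~\ref{prop: one min model}.

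\textbf{Comparison with the paper.}
The paper never glues. It takes the same class $g'=5g+4\lambda$ and uses \cite[Proposition 2.3]{BFM} with Proposition~\ref{prop: one min model} to realise it as the Pl\"ucker class of some cubic $X'$ with $F(X')\cong F(X)$. It shows $X'\not\cong X$ because the isometry $g\mapsto g'$ acts by $23\not\equiv\pm1$ on $\langle\beta\rangle$ (Lemma~\ref{lem: action on subgroup}). It then identifies $X'\cong Y$ via Corollary~\ref{cor:BF imply FM} and uniqueness of the Fourier--Mukai partner.

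Once corrected, your argument yields $F(X)\cong F(Y)$ directly and compatibly with the geometric isometry $T(X)\simeq T(Y)$, using \cite{FL23} only for uniqueness. The paper's route avoids tracking glue through the Abel--Jacobi map and avoids the geometric $23$-computation.
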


If $X$ and $Y$ are two smooth cubic fourfolds, the property of having birational Fano varieties of lines is closely related to Fourier--Mukai partnership of the cubic fourfolds. In Section \ref{subsec: BF implies FM v general}, we prove that if $X$ is very general in $\calC_d$, and $F(X)$ is birational to $F(Y)$, then $X$ and $Y$ are Fourier--Mukai partners. In \cite{BFM}, the authors conjecture that a birational equivalence between $F(X)$ and $F(Y)$ implies that $X$ and $Y$ are birational. In the case of irrational cubic fourfolds, one may expect these two conditions to be equivalent, and thus one expects Theorem \ref{thm: bir Fano}, proved in Section \ref{subsec: bir fanos}.

\subsection{Relationship to Fourier--Mukai partner}\label{subsec: BF implies FM v general}

Recall $\tilde{\Lambda}:=E_8(-1)^{\oplus 2}\oplus U^{\oplus 4}$ is the extended K3 lattice. We follow the discussion of \cite[Proof of Proposition 4.1]{HuyK3cat} (see also \cite{markman}).

Recall that there exists a distinguished primitive embedding $H^2(F(X),\bZ)\hookrightarrow\tilde{\Lambda}$ whose image has an orthogonal complement spanned by a class $v\in \tilde{\Lambda}$ with $v^2=2$ (see \cite{markman} and also \cite[Proof of Proposition 4.1]{HuyK3cat}. One can equip $\tilde{\Lambda}$ with an induced Hodge structure from $H^2(F(X),\bZ)$ such that $v$ is algebraic. 

By \cite[Corollary 8]{Add16}, this embedding coincides for the distinguished embedding 
$$H^2(F(X),\bZ)\simeq \lambda_1^\perp\hookrightarrow \widetilde{H}(\Ku(X),\bZ)\cong \widetilde{\Lambda}. $$

If $F(X)$ and $F(Y)$ are birational, then there exists a Hodge isometry between $H^2(F(X),\bZ)$ and $H^2(F(Y),\bZ)$ extending to a Hodge isometry $\widetilde{\Lambda}_X\simeq \widetilde{\Lambda}_Y$, where the subscripts denote induced Hodge structures. We therefore obtain a Hodge isometry $\widetilde{H}(\Ku(X),\bZ)\simeq \widetilde{H}(\Ku(Y),\bZ)$.

We obtain:

\begin{corollary}\label{cor:BF imply FM}
    Let $X$ be very general in $\calC_d$, and suppose that $F(X)$ is birational to $F(Y)$ for a smooth cubic fourfold $Y$. Then $Y$ is a Fourier--Mukai partner of $X$.
\end{corollary}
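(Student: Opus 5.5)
The statement to prove is Corollary~\ref{cor:BF imply FM}: if $X$ is very general in $\calC_d$ and $F(X)$ is birational to $F(Y)$, then $\Ku(X)\simeq\Ku(Y)$. The plan is to turn the birational map into a Hodge isometry of Mukai lattices and then apply Theorem~\ref{thm: huy17}.

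\textbf{Step 1: a Hodge isometry of $H^2$.} A birational map $\phi:F(X)\dashrightarrow F(Y)$ between hyperk\"ahler manifolds is an isomorphism in codimension one. It therefore induces $\phi_*:H^2(F(X),\bZ)\to H^2(F(Y),\bZ)$. This map is a Hodge isometry for the BBF forms. It is also a parallel transport operator, by Huybrechts' deformation result for birational hyperk\"ahler manifolds.

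\textbf{Step 2: extend to $\widetilde{\Lambda}$.} Use Markman's description of monodromy for $K3^{[2]}$ type. A parallel transport operator preserves the canonical $O(\widetilde{\Lambda})$-orbit of the primitive embedding $H^2\hookrightarrow\widetilde{\Lambda}$ with orthogonal complement $\bZ v$, $v^2=2$. Hence $\phi_*$ extends to an isometry $\widetilde{\Lambda}_X\to\widetilde{\Lambda}_Y$ sending $v_X\mapsto\pm v_Y$.

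For a cheaper argument that avoids Markman: $H^2$ has discriminant group $\bZ/2$ and $\bZ v$ has discriminant group $\bZ/2$. The gluing is unique, and the only automorphisms of $\bZ/2$ are $\pm1$, which agree. So any isometry of $H^2$ extends after choosing $v\mapsto v$.

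The Hodge structure on $\widetilde{\Lambda}$ is by definition induced from $H^2$, with $v$ of type $(1,1)$. The extension is therefore a Hodge isometry.

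\textbf{Step 3: identify with the Addington--Thomas lattices.} By \cite[Corollary 8]{Add16}, the embedding $H^2(F(X),\bZ)\simeq\lambda_1^\perp\subset\widetilde{H}(\Ku(X),\bZ)$ agrees with this distinguished embedding, compatibly with Hodge structures. The same holds for $Y$. Composing gives a Hodge isometry $\widetilde{H}(\Ku(X),\bZ)\simeq\widetilde{H}(\Ku(Y),\bZ)$.

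\textbf{Step 4: conclude.} Since $X$ is very general in $\calC_d$, Theorem~\ref{thm: huy17} yields an equivalence $\Ku(X)\simeq\Ku(Y)$. This is exactly the definition of $Y$ being a Fourier--Mukai partner of $X$.

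\textbf{Main obstacle.} The delicate point is Step 2: extending the isometry while keeping the Hodge structures compatible. In particular, the Addington identification requires $v$ to correspond to the algebraic class orthogonal to $\lambda_1^\perp$. I would handle this with the discriminant-gluing argument above, rather than relying on the full monodromy theory.
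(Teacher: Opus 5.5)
Your proposal is correct and follows essentially the same route as the paper. The birational map gives a Hodge isometry $H^2(F(X),\bZ)\simeq H^2(F(Y),\bZ)$, this extends to a Hodge isometry $\widetilde{\Lambda}_X\simeq\widetilde{\Lambda}_Y$, these lattices are identified with $\widetilde{H}(\Ku(X),\bZ)$ and $\widetilde{H}(\Ku(Y),\bZ)$ via \cite[Corollary 8]{Add16}, and Theorem~\ref{thm: huy17} concludes. Your discriminant-gluing argument (both discriminant groups are $\bZ/2\bZ$, so any isometry of $H^2$ extends with $v\mapsto v$, and the extension is Hodge because $v$ is algebraic) justifies the extension step, which the paper only asserts.
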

\begin{proof}
    As we just argued, if $F(X)$ is birational to $F(Y)$, then we obtain a Hodge isometry $\tilde{H}(\Ku(X),\bZ)\simeq \tilde{H}(\Ku(Y),\bZ)$. Applying Theorem \ref{thm: huy17} completes the proof.
\end{proof}

\begin{remark}
    The converse does not hold in general. Indeed, in \cite[Section 6]{BFM}, the authors prove that a very general cubic fourfold $X$ in $\calC_{546}$ has a nontrivial Fourier--Mukai partner $Y$, but $F(X)$ and $F(Y)$ are not birational. The key here is that the Hodge isometry $\tilde{H}(\Ku(X),\bZ)\to \tilde{H}(\Ku(Y),\bZ)$ does not restrict to a Hodge isometry between $H^2(F(X),\bZ)$ and $H^2(F(Y),\bZ)$.
\end{remark}

\subsection{Birational geometry of $F(X)$}\label{subsec: bir fanos}
Let $X$ be very general in $\calC_{44}$ and $F(X)$ the Fano variety of lines on $X$. Let $\alpha:H^4(X,\bZ)\to H^2(F(X),\bZ)$ be the Abel--Jacobi map, which restricts to an anti-isometry on primitive cohomology.
Set $g=\alpha(\eta_X)$, $s:=\alpha([S])$, and $\lambda:=s-3$. We can write the Gram matrix for the BBF form on $\NS(F(X))=\langle g, \lambda\rangle$ as
$$J_{44}:=\left(\begin{matrix}
    6 & 2\\
    2 & -14
\end{matrix}\right). $$

Let $\overline{\mathrm{Pos}(F(X))}$ be the component of the cone $\{x\in \NS(F(X))\otimes \bR\mid q(x)\geq 0\}$ containing an ample class. 

\begin{proposition}\label{prop: one min model}
    For a very general $X\in \calC_{44}$, the Fano variety of lines $F(X)$ has only one minimal model.
\end{proposition}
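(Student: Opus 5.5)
The plan is to use the standard description of the birational geometry of a hyperk\"ahler manifold of $K3^{[2]}$ type in terms of its Picard lattice. By Markman's results and the wall-divisor classification of Bayer--Macr\`i and Mongardi, $\overline{\mathrm{Pos}(F(X))}\cap \NS(F(X))_\bR$ is cut into chambers as follows. The movable cone is bounded by hyperplanes $e^\perp$, where $e$ is either a class with $q(e)=-2$, or a class with $q(e)=-8$ and $\mathrm{div}(e)=2$. Inside the movable cone, the nef cones of the birational models are separated by flopping walls $e^\perp$, where $q(e)=-10$ and $\mathrm{div}(e)=2$. Minimal models of $F(X)$ correspond to these chambers, up to the action of birational automorphisms. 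It therefore suffices to show that no flopping wall passes through the interior of the movable cone. I would in fact show that no wall of either kind exists in the rank-two lattice $\NS(F(X))=\langle g,\lambda\rangle$ with Gram matrix $J_{44}$.

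Write $e=xg+y\lambda$. Then
\[
q(e)=6x^2+4xy-14y^2=2(3x^2+2xy-7y^2).
\]
Multiplying $3x^2+2xy-7y^2=m$ by $3$ and completing the square gives the norm equation
\[
(3x+y)^2-22y^2=3m .
\]
I would treat the three relevant values separately, reducing modulo $11$ each time; recall that the nonzero squares modulo $11$ are $1,3,4,5,9$.

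\begin{itemize}
\item For $q(e)=-10$ we have $m=-5$, so we need $u^2\equiv -15\equiv 7 \pmod{11}$.
\item For $q(e)=-2$ we have $m=-1$, so we need $u^2\equiv -3\equiv 8\pmod{11}$.
\item For $q(e)=-8$ we have $m=-4$, so we need $u^2\equiv -12\equiv 10\pmod{11}$.
\end{itemize}

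None of $7,8,10$ is a square modulo $11$, so there are no integral solutions in any case. Hence $\NS(F(X))$ contains no wall divisors at all, and the divisibility conditions never need to be checked.

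It follows that the movable cone equals $\overline{\mathrm{Pos}(F(X))}$ and has no interior flopping walls. So the nef cone equals the movable cone, and every birational model of $F(X)$ that is itself hyperk\"ahler is isomorphic to $F(X)$: a birational map between hyperk\"ahler manifolds sending an ample class to an ample class extends to an isomorphism. The very generality of $X$ is used only to ensure that $\NS(F(X))$ is exactly the rank-two lattice $J_{44}$, via the Abel--Jacobi identification with $A(X)$.

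The main obstacle is not the arithmetic but pinning down the correct wall-divisor criteria for $K3^{[2]}$ type in the form needed. In particular one must be sure that the flopping classes are exactly the $q=-10$, $\mathrm{div}=2$ classes and that the $q=-2$ and $q=-8$ classes bound the movable cone. I would cite Bayer--Macr\`i's theorem for $K3^{[n]}$ type with $n=2$, as phrased by Mongardi or Debarre's survey, and then the congruence computation above finishes the argument.
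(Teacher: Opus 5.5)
Your proposal is correct and follows the same route as the paper. Both invoke the wall-divisor description of the movable and nef cones for $K3^{[2]}$-type and observe that $J_{44}$ represents none of the relevant negative values, so $\overline{\Mov(F(X))}=\Nef(F(X))=\overline{\mathrm{Pos}(F(X))}$. Your reduction $(3x+y)^2-22y^2=3m$ modulo $11$ supplies exactly the ``elementary check'' the paper leaves to the reader, and including the $q=-8$ classes is harmless; it is in fact vacuous, since a primitive divisibility-$2$ class in a $K3^{[2]}$-type lattice has square $\equiv -2 \pmod 8$.
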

\begin{proof}
    We claim that $\overline{\Mov(F(X))}=\Amp(F(X))=\overline{\mathrm{Pos}(F(X))}$, implying the claim via the Kawamata Morrison cone theorem for hyperk\"ahler fourfolds \cite{AV1}.  Indeed, $\Mov(F(X))$ is a connected component of $$\overline{\mathrm{Pos}(F(X))}\setminus \bigcup_{\substack{\rho\in \NS(F(X)) \\  \rho^2=-2}}\rho^\perp, $$ and the nef cone is a connected component of $$\overline{\mathrm{Pos}(F(X))}\setminus \bigcup_{\substack{\rho\in \NS(F(X))\\ \rho^2\in\{-2,-10\}\\ \mathrm{div}(\rho)=2}}\rho^\perp;$$ see \cite[Theorem 3.16]{debarre2020hyperkahler} for instance in the case of $K3^{[2]}$ type. It is an elementary check to see that the lattice $J_{44}$ does not represent $-2$ or $-10$.
\end{proof}

Denote the discriminant group of $\NS(F(X))$ by $D:=\NS(F(X))^\vee/\NS(F(X))$. Note that 
$$D=\left\langle \frac{\lambda}{2}\right\rangle\oplus \left\langle \frac{g-3\lambda}{44}\right\rangle\cong \bZ/2\bZ\oplus \bZ/44\bZ. $$
We have the following: 
\begin{lemma}\label{lem: action on subgroup}
    An isometry $\phi\in \mathrm{O}(\NS(F(X)))$ is induced by a birational automorphism of $F(X)$ if and only if $\varphi$ preserves the positive cone and acts by $\pm \id$ on the subgroup
    $H=\langle \frac{g-3\lambda}{44}\rangle\cong \bZ/44\bZ\leq D(\NS(F))$.
\end{lemma}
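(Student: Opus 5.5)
We will use Markman's Torelli theorem for manifolds of $K3^{[2]}$ type. It identifies the image of $\mathrm{Bir}(F(X))\to \mathrm{O}(H^2(F(X),\bZ))$ with the Hodge isometries that are monodromy operators and preserve the movable cone. For $K3^{[2]}$ type the monodromy group is $\mathrm{O}^+(H^2)$, the orientation-preserving isometries; these act by $\pm\id$ on the discriminant $\bZ/2\bZ$, which is automatic. Since $X$ is very general, the Hodge isometries of the transcendental lattice $T:=\NS(F(X))^\perp\subset H^2(F(X),\bZ)$ are only $\pm\id_T$. This is the same input as Remark~\ref{rmk: very general}, transported via the Abel--Jacobi anti-isometry on primitive cohomology. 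By Proposition~\ref{prop: one min model}, the movable cone equals the closed positive cone, so preserving $\Mov$ is the same as preserving the positive cone.

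\emph{Forward direction.} Suppose $\phi$ on $\NS(F(X))$ is induced by a birational automorphism $\psi$. Then $\psi^*$ is a Hodge isometry of $H^2(F(X),\bZ)$, so it restricts to $\pm\id$ on $T$. It also preserves $\Mov(F(X))$, hence the positive cone. By Nikulin's gluing \cite[Proposition 1.15.1]{nikulin} there is an anti-isometry $\gamma$ from the glue subgroup $D_{\NS}\supseteq H_{\NS}$ to $D(T)$ that intertwines the actions of $\phi$ and $\psi^*|_T$.

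\emph{Identifying the glue subgroup.} We will check that the glue subgroup is exactly $H=\langle \frac{g-3\lambda}{44}\rangle$. Note that $H^2(F(X),\bZ)\cong E_8(-1)^2\oplus U^3\oplus\langle -2\rangle$ has discriminant $\bZ/2$, and $\NS(F(X))$ is primitive with $D(\NS)\cong\bZ/2\oplus\bZ/44$. The glue subgroup $G\subset D(\NS)$ is then the subgroup with $G^\perp/G$ isomorphic to $\bZ/2$, corresponding to the class $\delta/2$ with $q(\delta)=-2$. We will show that $G$ has order $44$ and contains neither $\lambda/2$ nor the mixed element. Concretely, we compute the discriminant form: $q(\frac{g-3\lambda}{44})$ and $q(\lambda/2)=-14/4$ modulo $2\bZ$. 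We then locate the unique index-$2$ subgroup with the right orthogonal complement. Alternatively, $H$ is the image of $D(A(X))$ under the Abel--Jacobi map.

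Granting this, $\psi^*|_T=\pm\id$ forces $\phi|_H=\pm\id$, with the same sign.

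\emph{Converse direction.} Suppose $\phi$ preserves the positive cone and $\phi|_H=\epsilon\,\id$ with $\epsilon\in\{\pm1\}$. Glue $\phi$ with $\epsilon\,\id_T$; compatibility on $H\cong D(T)$ holds by construction. This gives an isometry $\Phi$ of $H^2(F(X),\bZ)$, and $\Phi$ is a Hodge isometry because it is $\pm\id$ on $T$. We must also check that $\Phi$ lies in $\mathrm{O}^+$. Orientation of positive $3$-planes splits into a positive direction in $\NS$, on which $\phi$ preserves the cone, and the positive $2$-plane in $T\otimes\bR$. On the latter, $\pm\id$ preserves orientation, so $\Phi\in\mathrm{O}^+$. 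Then $\Phi$ preserves $\Mov=\overline{\mathrm{Pos}}$, and Markman's Hodge-theoretic Torelli gives a birational automorphism inducing it.

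\emph{Main obstacle.} The delicate step is the precise identification of the glue subgroup with $H$, rather than some other order-$44$ subgroup of $\bZ/2\oplus\bZ/44$. This requires tracking the $\langle-2\rangle$ summand and the divisibility of $\lambda$. We would verify it by computing $\mathrm{div}(g)=2$ and $\mathrm{div}(\lambda)$ in $H^2(F(X),\bZ)$ directly.
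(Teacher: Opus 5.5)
Your proposal is correct and is essentially the argument the paper invokes by citing \cite[Lemma~4.2]{BFM}. It uses Markman's Torelli theorem with $\mathrm{Mon}^2=\mathrm{O}^+$ for $K3^{[2]}$ type, the very general hypothesis forcing $\pm\id$ on $T$, Proposition~\ref{prop: one min model} to identify $\Mov$ with the positive cone, and Nikulin gluing in both directions.

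The one step to tidy is the identification of the glue group $G$. It is not isotropic, so ``$G^\perp/G\cong\bZ/2$'' does not make sense. Use instead that $G^\perp=(\NS_{\mathbb Q}\cap H^2(F(X),\bZ)^\vee)/\NS$ has order $88/44=2$ and contains $[g/2]=[\lambda/2+22\beta]$, since $\mathrm{div}(g)=2$; here $\beta=\frac{g-3\lambda}{44}$. Then $G=[g/2]^\perp=\langle\beta\rangle$, because $\frac g2\cdot\beta=0$ while $\frac g2\cdot\frac\lambda2=\frac12$. By contrast, the criterion ``contains neither $\lambda/2$ nor the mixed element'' alone would not distinguish $\langle\beta\rangle$ from $\langle\beta+\lambda/2\rangle$.
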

\begin{proof}
    The proof follows identically to that of \cite[Lemma 4.2]{BFM}.
\end{proof}

We now prove the main result of this section.

\begin{proof}[Proof of \ref{thm: bir Fano}]
    Consider the class $g'=5g+4\lambda$ in $\NS(F(X))$. This class is primitive and satisfies $q(g')=6$ and $\mathrm{div}(g')=2$. By \cite[Proposition 2.3]{BFM}, there exists a smooth cubic fourfold $X'$ and a birational equivalence $\pi:F(X')\dashrightarrow F(X)$ such that $\pi^*(g')$ is the Pl\"ucker class of $F(X')$. By Proposition \ref{prop: one min model}, $\pi$ is an isomorphism. To show $X\not\cong X'$, it suffices to show that
    $$\phi:=\left(\begin{matrix}
        5 & -6\\
        4 &-5
    \end{matrix}\right), $$ which is the unique isometry of $\NS(F(X))$ sending $g$ to $g'$, is not induced by an automorphism of $F(X)$. Indeed, $X\cong X'$ if and only if there exists a polarised isomorphism $F(X)\cong F(X')$ \cite{charles12}.

    By Lemma \ref{lem: action on subgroup}, it suffices to compute the action of $\phi$ on $\langle \frac{g-3\lambda}{44}\rangle\leq D$. Let $\beta:= \frac{g-3\lambda}{44};$ one computes that $\phi(\beta)=23[\beta]\in D$. Since $23\not\equiv \pm 1 \mod 44$, the two ample classes $g$ and $g' \in \NS(F(X))$ are not in the same orbit under the action of $\Aut(F(X))$, and so $X\not\cong X'$.

    Now $X'$ is a  nontrivial Fourier--Mukai partner of $X$ by Corollary \ref{cor:BF imply FM}. Since $X'$ has a unique Fourier--Mukai partner by \cite[Proposition 2.6]{FL23}, we conclude that $X'\cong Y$, where $Y$ is the cubic fourfold constructed in Section~\ref{subsec: construction}.
\end{proof}

\appendix
\section{An explicit example.}\label{appendix}

The accompanying ancillary files contain \verb|MAGMA| code \cite{MAGMA} to verify several computations used in this article. We briefly explain the rationale below, giving an account of how the code produces examples over $\bF_{101}$ and then arguing that these examples lift to characteristic zero.

\subsection{Verification of Proposition \ref{prop: dim of quintics}}\label{app: quintics}
We compute a single example over $\bF_{101}$ of a Fano model $S\subset \bP^5$. We construct an Enriques surface as in \cite[Example: H126E7]{MagmaHandbook}. The built in function constructs a surface of degree 9 and genus 6 in $\bP^4$ that is isomorphic to an Enriques surface blown up at a single point. 
We obtain $S$ by recovering the minimal model using the function \verb|MinimalModelKodairaDimensionZero()|.
We verify the surface $S$ is smooth, and contained in a $\bP^5.$
Finally, we verify $h^0(\bP^5,\calI^2_S(5))=6$ in this example.

\subsection{Verification of Proposition \ref{prop: base locus}}\label{app: base locus}

To compute the scheme theoretic base locus, we let $L$ denote the linear system of quintics in $\bP^5$ double along $S$, and compute \verb|BaseScheme(L)|. We find this has dimension 2, and degree 60. Next, we remove the component supported on $S$, and compute that the residual base locus has degree 20. By Proposition \ref{prop: base locus}, this verifies the union $\sqcup \widetilde{\Pi}_i$ is the set theoretic base locus of the linear system $D$ on $\Bl_S\bP^5.$

To conclude, we show that the residual base locus, denoted \verb|Bextra| in the computation, coincides scheme-theoretically with the union of these twenty planes. The function \verb|ReducedSubscheme(Bextra)| would help with this, but was too computationally time consuming on our machines. 

Instead, we sliced \verb|Bextra| with a random $\bP^3$ in $\bP^5$ to obtain 20 points. We reconstructed the tangent planes to \verb|Bextra| at these points and verified that the union of these planes equals \verb|Bextra| scheme theoretically.

\subsection{Verification of Proposition \ref{prop: image is a cubic}}\label{app: smoothness}

We take a random smooth cubic fourfold $X$ containing the Enriques surface $S$ that we constructed. The quintics double along $S$ were computed earlier as \verb|Q:= Sections(L)|. We use \verb|Q| to define the map \verb|phi0:=map<X->P5p\mid Q>| and take the image \verb|Y:=Image(phi0,X)|. We verify this is a smooth cubic fourfold.

\subsection{Lifting the computations}\label{app: lift}
We now explain how to lift the examples constructed above from $\bF_{101}$ to $\bC$. 
Let $(S,A)$ denote the Enriques surface over $\bF_{101}$ with its Fano polarisation $A$.
By \cite{Lang}, $S$ admits a lift to characteristic zero, and by \cite[Theorem 3.3.1]{Sern}, there is no obstruction to lifting $A$ along this deformation since $H^2(S,\calO_S)=0$.  As very ampleness is an open condition, we obtain a family of Fano-polarised Enriques surfaces
\[
(\mathcal S,\mathcal A)\longrightarrow \Spec(R)
\]
over a finite extension of $\bZ_{101}$, whose special fibre is $(S,A)$ and whose generic fibre $(\mathcal{S}_\eta,\mathcal{A}_\eta)$ has characteristic zero. The line bundle $\mathcal A$ gives a relative embedding $\mathcal S\hookrightarrow \bP^5_R.$

We claim that the properties verified above hold also for the generic fibre.
Indeed, the computation in Section \ref{app: quintics} gives $H^1(\bP^5,\calI_S^2(5))=0$, so also $H^1(\bP^5,\calI_{\mathcal S_\eta}^2(5))=0$ by upper semicontinuity.

Note also that each plane $\Pi_i$ lifts to a plane $\mathcal P_i\subset\bP_R^5$. 
To see this, we recall that $F_i=\Pi_i\cap S$ is a smooth plane cubic and look at the long exact sequence in cohomology associated to the sequence
\[
0\to\calO_S\to\calO_S(F_i)\to N_{F_i/S}\to0.
\]
We find $H^1(S,\calO_S(F_i))=0$, so by \cite[Theorem 3.3.4]{Sern}, the unique section of $\calO_S(F_i)$ lifts to a plane cubic $\calF_i\subset\bP_R^5$. The plane $\calP_i$ spanned by a $\calF_i$ lifts $\Pi_i$.

We next consider the base locus $\mathcal B$ of the system of quintics double along $\mathcal S$.
Set 
\[
\mathcal B'=2\mathcal S\cup\bigcup_{i=\pm1,\dots\pm10}\calP_i,
\]
The proof of Proposition \ref{prop: base locus} gives the inclusion $\mathcal B'\subseteq\mathcal B.$
The computation in Section~\ref{app: base locus} verifies that this inclusion is an equality scheme-theoretically on the special fibre.
Consequently the coherent sheaf $\calI_{\mathcal B'}/\calI_{\mathcal B}$ has zero restriction to the special fibre. Its support is closed in the proper scheme $\bP^5_R$, so its image in $\Spec R$ is closed. It follows that $(\calI_{\mathcal B'}/\calI_{\mathcal B})_\eta=0$ as well. Thus
\[
\mathcal B_\eta
=
2\mathcal S_\eta
\cup
\bigcup_{i=\pm1,\dots,\pm10}\mathcal P_{i,\eta}
\]
scheme-theoretically.
This verifies the constructions of Section \ref{app: base locus} lift to the generic fibre.

For Section \ref{app: smoothness}, we note that the cubic $X$ containing $S$ lifts to a cubic $\mathcal{X}\subset\bP_R^5$ containing $\mathcal S$, as does the morphism $\Bl_{\mathcal{S}}\mathcal{X}\to\bP^5_R$ induced by the complete linear system of quintics double along $\mathcal{S}$. (Here, we use that 
\[
\mathcal{B}\cap\mathcal{X}=\bigcup_{i=\pm1,\dots,\pm10}\mathcal{F}_i\subset\mathcal{S}
\]
as in the proof of Proposition~\ref{prop: image is a cubic}). The image is a cubic fourfold $\mathcal{Y}$, and our computation verifies that the special fibre $Y$ is smooth. Since smoothness is an open condition, so is $\mathcal{Y}_\eta$.

\bibliographystyle{alpha}
\bibliography{bibliography}
\end{document}